\newif\ifdraft
\newcommand{\tensor}{\otimes}
\newcommand{\isom}{\simeq}
\newcommand{\C}{\mathbb{C}}
\newcommand{\Q}{\mathbb{Q}}
\newcommand{\Z}{\mathbb{Z}}
\renewcommand{\P}{\mathbb{P}}
\newcommand{\M}{\mathcal{M}}
\renewcommand{\H}{\mathcal{H}}
\newcommand{\HH}{\mathbb{H}}
\renewcommand{\O}{\mathcal{O}}
\newcommand{\DR}{\mathrm{DR}}
\newcommand{\codim}{\mathrm{codim}}
\newcommand{\K}{\mathcal{K}}
\newcommand{\gr}{\mathrm{gr}}
\newcommand{\h}{\underline h}
\newcommand{\cl}{\mathrm{cl}}
\newcommand{\DB}{\underline{\Omega}} 
\newcommand{\Sing}{\mathrm{Sing}}
\newcommand{\dual}{\mathbf{D}}
\newcommand{\lcdef}{\mathrm{lcdef}}
\newcommand{\sm}{\smallsetminus}
\newcommand{\Div}{\mathrm{Div}}
\newcommand{\CDiv}{\mathrm{CDiv}}
\newcommand{\Pic}{\mathrm{Pic}}
\newcommand{\IC}{\mathrm{IC}}
\newtheorem{thm}[equation]{Theorem}
\newtheorem{cor}[equation]{Corollary}
\newtheorem{lem}[equation]{Lemma}
\newtheorem{prop}[equation]{Proposition}
\newtheorem{conj}[equation]{Conjecture}
\theoremstyle{definition}
\newtheorem{question}[equation]{Question}
\theoremstyle{remark}
\newtheorem{rmk}[equation]{Remark}
\newtheorem{ex}[equation]{Example}
\theoremstyle{plain}
\newcommand{\theoremref}[1]{\hyperref[#1]{Theorem~\ref*{#1}}}
\newcommand{\lemmaref}[1]{\hyperref[#1]{Lemma~\ref*{#1}}}
\newcommand{\definitionref}[1]{\hyperref[#1]{Definition~\ref*{#1}}}
\newcommand{\propositionref}[1]{\hyperref[#1]{Proposition~\ref*{#1}}}
\newcommand{\conjectureref}[1]{\hyperref[#1]{Conjecture~\ref*{#1}}}
\newcommand{\corollaryref}[1]{\hyperref[#1]{Corollary~\ref*{#1}}}
\newcommand{\exampleref}[1]{\hyperref[#1]{Example~\ref*{#1}}}
\let\old@caption\caption
\renewcommand*{\caption}[1]{%
	\setcounter{figure}{\value{equation}}%
	\stepcounter{equation}%
	\old@caption{#1}\relax%
}
\newcounter{intro}
\newtheorem{intro-conjecture}[intro]{Conjecture}
\newtheorem{intro-corollary}[intro]{Corollary}
\newtheorem{intro-theorem}[intro]{Theorem}
\def\Ddots{\mathinner{\mkern1mu\raise\p@
\vbox{\kern7\p@\hbox{.}}\mkern2mu
\raise4\p@\hbox{.}\mkern2mu\raise7\p@\hbox{.}\mkern1mu}}
\begin{document}

\title[Q-factoriality and Hodge-Du Bois theory]{Q-factoriality and Hodge-Du Bois theory}

\author{Sung Gi Park}
\address{Department of Mathematics, Princeton University, Fine Hall, Washington Road, Princeton, NJ 08544, USA}
\address{Institute for Advanced Study, 1 Einstein Drive, Princeton, NJ 08540, USA}
\email{sp6631@princeton.edu \,\,\,\,\,\,sgpark@ias.edu}

\author{Mihnea Popa}
\address{Department of Mathematics, Harvard University, 1 Oxford Street, Cambridge, MA 02138, USA}
\email{mpopa@math.harvard.edu}

\thanks{MP was partially supported by NSF grants DMS-2040378 and DMS-2401498.}

\subjclass[2010]{14B05, 14C30, 14F10, 32S35}

\date{\today}

\begin{abstract}
We prove Hodge-theoretic formulas for the $\Q$-factoriality defect of a normal projective variety, and for the local analytic $\Q$-factoriality defect 
of an analytic germ of a normal variety. These formulas lead to consequences ranging from a local analytic version of Samuel's conjecture to a characterization of projective rational homology threefolds with rational singularities, or the invariance of Hodge-Du Bois numbers under flops of projective threefolds.
\end{abstract}

\maketitle

\makeatletter
\newcommand\@dotsep{4.5}
\def\@tocline#1#2#3#4#5#6#7{\relax
  \ifnum #1>\c@tocdepth 
  \else
    \par \addpenalty\@secpenalty\addvspace{#2}%
    \begingroup \hyphenpenalty\@M
    \@ifempty{#4}{%
      \@tempdima\csname r@tocindent\number#1\endcsname\relax
    }{%
      \@tempdima#4\relax
    }%
    \parindent\z@ \leftskip#3\relax
    \advance\leftskip\@tempdima\relax
    \rightskip\@pnumwidth plus1em \parfillskip-\@pnumwidth
    #5\leavevmode\hskip-\@tempdima #6\relax
    \leaders\hbox{$\m@th
      \mkern \@dotsep mu\hbox{.}\mkern \@dotsep mu$}\hfill
    \hbox to\@pnumwidth{\@tocpagenum{#7}}\par
    \nobreak
    \endgroup
  \fi}
\def\l@section{\@tocline{1}{0pt}{1pc}{}{\bfseries}}
\def\l@subsection{\@tocline{2}{0pt}{25pt}{5pc}{}}
\makeatother


\tableofcontents

\section{Introduction}\label{scn:intro}

In this paper we relate $\Q$-factoriality, and its local analytic version in the case of threefolds, to various symmetries of the singular cohomology or the Hodge-Du Bois diamond of a complex projective variety. More generally, on normal varieties we obtain Hodge-theoretic formulas for the defect of $\Q$-factoriality of a projective variety, and for the defect of local analytic $\Q$-factoriality of an analytic germ.
This complements the picture described in the companion paper \cite{PP25a}, where symmetries in the cohomology of a projective variety are related to its local cohomological defect and higher rational singularities.\footnote{Note that this paper and \cite{PP25a} appeared originally as a single paper \cite{PP}. Both contain new material, compared to the original preprint.} At the same time, it is a line of study of independent interest that has been considered previously in more specialized cases.

\noindent
{\bf $\Q$-factoriality defect.}
We prove a general result showing that a potential symmetry of the singular cohomology of a normal projective variety $X$ is controlled by its $\Q$-factoriality. This has appeared before in the literature in various special situations, most importantly in the case of threefolds, in the work of Namikawa and Steenbrink \cite{NS95}.

Following the usual conventions, for a normal variety $X$, we denote by $\Div(X)$ the free abelian group of Weil divisors on $X$ (i.e. generated by the irreducible codimension $1$ subvarieties of $X$), and by $\CDiv(X)$ the subgroup of Cartier divisors on $X$. We also denote $\Div_\Q(X):=\Div(X)\otimes_\Z\Q$ and $\CDiv_\Q(X):=\CDiv(X)\otimes_\Z\Q$. The subgroups $\mathrm{Alg}^1_\Q(X)$, $\mathrm{Hom}^1_\Q(X)$, and $\mathrm{Num}^1_\Q(X)$ of $\Div_\Q(X)$ are the groups of $\Q$-Weil divisors algebraically, homologically, or numerically equivalent to zero, respectively; see \cite[Section 19.1]{Fulton98}. The \textit{$\Q$-factoriality defect} of $X$ is 
$$
\sigma(X):=\dim_\Q \Div_\Q(X)/\CDiv_\Q(X).
$$
Thus $\sigma(X)=0$ if and only if $X$ is $\Q$-factorial. This invariant was originally defined and studied by Kawamata \cite{Kawamata88} in order to prove important results in the three-dimensional Minimal Model Program, and has since been a subject of interest.  When $X$ is a projective threefold with at worst isolated rational singularities and $H^2 (X, \O_X)=0$, Namikawa-Steenbrink \cite[Theorem 3.2]{NS95} proved that $\sigma(X)= h^4 (X)- h^2(X)$,\footnote{We use the notation $h^i (X) : = \dim_{\Q} H^i (X, \Q)$ rather than the more common Betti number notation $b_i (X)$ for uniformity, since at times we will make comparisons with the dimension $Ih^i (X) : = \dim_{\Q} \mathit{IH}^i (X, \Q)$ of intersection cohomology.} thus providing an obstruction to Poincar\'e duality, with numerous applications to the problem of the existence of global smoothings, and to the study of terminal (especially Fano) threefolds. See also \cite[Remark 3.2]{FL22} for a modern interpretation of this formula using link invariants.

Here we provide an extension of this result to arbitrary normal varieties of any dimension. This will be used in many of the applications discussed later.
 
 \begin{intro-theorem}\label{thm:Q-factoriality-defect}
Let $X$ be a normal projective variety of dimension $n$. Then the following conditions are equivalent:
\begin{center}
(i) $h^1(X)=h^{2n-1}(X)$,\quad
(ii) $\mathrm{Alg}^1_\Q(X)\subset \CDiv_\Q(X)$,\quad
(iii) $\sigma(X)$ is finite.
\end{center}
In this case,
$$
\sigma(X)=\dim_\Q \big(\mathit{IH}^2(X,\Q)\cap \mathit{IH}^{1,1}(X)\big)-\dim_\Q \big(\ker(H^2(X,\Q)\to H^2(X,\O_X))\big).
$$
In particular, if $X$ has rational singularities, then
$$
\sigma(X)= h^{2n-2}(X)-h^2 (X).
$$
\end{intro-theorem}

Thus for a variety with rational singularities, a small piece of all potential duality, namely that between $H^2 (X, \Q)$ and $H^{2n-2} (X, \Q)$, implies that $X$ must be $\Q$-factorial. It is worth mentioning that condition (ii) implies that every deformation of a $\Q$-Cartier divisor is $\Q$-Cartier.

The proof of Theorem \ref{thm:Q-factoriality-defect} relies on a version of the Lefschetz $(1,1)$-theorem for Weil divisors on normal varieties, which states that there is
a surjective cycle class morphism 
$$
{\rm cl}_\Q \colon \Div_\Q(X)\to \mathit{IH}^2(X,\Q)\cap \mathit{IH}^{1,1}(X)
$$
whose kernel is $\mathrm{Alg}^1_\Q(X) = \mathrm{Hom}^1_\Q(X)$. This is essentially already known, although the description of the target space is new. As a supplement, we show that if $X$ is $\Q$-factorial, algebraic and homological equivalence for $\Q$-Weil divisors also coincide with numerical equivalence, i.e. the kernel is also 
equal to $\mathrm{Num}^1_\Q(X)$. See Theorem \ref{thm:Lefschetz-(1,1)} and the surrounding discussion.

\smallskip
\noindent
{\bf Local analytic $\Q$-factoriality defect.}
It is important to also consider an analogous but more refined local concept, that has interesting connections with global properties. Following the notation in \cite[Section 1]{Kawamata88}, for a normal analytic variety $X$ of dimension $n$, and for a point $x \in X$, one can define 
$$
\sigma^{\rm an}(X;x) :=\dim_\Q \Div_\Q^{\rm an}(X,x)/\CDiv_\Q^{\rm an}(X,x),
$$
the \emph{local analytic $\Q$-factoriality defect} of $X$ at $x$. Here $\Div_\Q^{\rm an}(X,x)$ is the direct limit of $\Div_\Q^{\rm an}(U)$, over all 
the analytic open neighborhoods of $x$ under inclusion, and similarly for $\CDiv_\Q^{\rm an}(X,x)$. This is equal to $0$ if and only if $X$ is locally analytically $\Q$-factorial at $x$ (which in turn implies $\Q$-factoriality, if true at all points). Theorem  \ref{thm:Q-factoriality-defect} has a local analogue:

\begin{intro-theorem}\label{thm:local analytic Q-factoriality defect}
Let $x\in X$ be an analytic germ of a normal algebraic variety of dimension $n$. Then the following conditions are equivalent:
\begin{center}
(i) $\sigma^{\rm an}(X;x)$ is finite,\quad
(ii) $R^1\mu_*\O_{\widetilde X}=0$ for a resolution $\mu:\widetilde X\to X$.
\end{center}
In this case,
$$
\sigma^{\rm an}(X;x)=\dim_\Q \ker\left(\H^{-n+2}(\IC_X)_x\to (R^2\mu_*\O_{\widetilde X})_x\right).
$$
In particular, if $X$ has rational singularities, then
$$
\sigma^{\rm an}(X;x)=\dim_\Q  \H^{-n+2}(\IC_X)_x.
$$
\end{intro-theorem}

The last statement also follows from \cite[Proposition 12.1]{PP25a}, which makes essential use of \cite[Satz 6.1]{Flenner81}.
Here $\IC_X$ is the intersection complex of $X$ with rational coefficients, and $\H^\bullet$ denotes its constructible cohomology. The theorem has some basic consequences described in Section \ref{scn:loc-an-def}; for instance, in Corollary \ref{cor:infinite-defect} we show that if $X$ is $S_3$, the set of points where $\sigma^{\rm an} (X; x)$ is infinite is either empty or a Zariski closed subset of pure codimension $2$.

The proof of Theorem \ref{thm:local analytic Q-factoriality defect} also relies on a version, this time local analytic, and perhaps more surprising, 
of the Lefschetz $(1,1)$-theorem; this is Theorem \ref{thm:local Lefschetz-(1,1)} in the main body of the paper. A key role in the statement and approach is played by the intersection complex, and the proof relies on the theory of mixed Hodge modules.

\noindent
{\bf Analogues of Samuel's conjecture.}
Recall that Samuel's conjecture states that a local complete intersection $X$ which is factorial in codimension $3$ is in fact factorial; this was proved by Grothendieck \cite[Corollaire 3.14]{SGA2}. In particular, this automatically applies if $X$  is regular in codimension $3$.

Theorem \ref{thm:local analytic Q-factoriality defect} quickly leads to various versions of this statement involving (local analytic) $\Q$-factoriality. The first is the local analytic version of Samuel's conjecture; a stronger version is stated in Theorem \ref{thm:Samuel conjecture}.

\begin{intro-corollary}
\label{cor:Samuel conjecture}
If $X$ is an algebraic local complete intersection, then $X$ is locally analytically $\Q$-factorial if and only if it is locally analytically $\Q$-factorial away from a subvariety of codimension at least $4$.
\end{intro-corollary}

Recall that if $X$ is a local complete intersection, then the local cohomological defect of $X$ (see Section \ref{scn:background})  satisfies $\lcdef (X) = 0$; the latter is a more general condition. Another consequence of Theorem \ref{thm:local analytic Q-factoriality defect} is the following statement for a variety with Du Bois singularities or Serre's condition $S_3$; a generalization to arbitrary $\lcdef (X)$ is stated in Corollary \ref{cor:Samuel-lcdef}.

\begin{intro-corollary}\label{cor:Samuel-lcdef-0}
Let $X$ be a normal algebraic variety with $\lcdef (X) = 0$ and $\codim_X \,\Sing(X) \ge 4$. 
If $X$ is Du Bois or satisfies $S_3$,  then $X$ is locally analytically $\Q$-factorial.
\end{intro-corollary}

\noindent
{\bf Local analytic $\Q$-factoriality defect and Hodge-Du Bois numbers.}
In the case of projective threefolds with rational singularities, using the two main theorems above as well as results from \cite{PP25a}, we give a formula for the local analytic $\Q$-factoriality defect in terms of Hodge-Du Bois numbers, completing the picture provided by Theorem \ref{thm:Q-factoriality-defect}.

\begin{intro-theorem}\label{thm:analytic-Q-factoriality-threefolds}
Let $X$ be a projective threefold with rational singularities. Then, there are two inequalities of Hodge-Du Bois numbers
$$
\underline h^{2,2}(X)\ge\underline h^{1,1}(X),\quad \underline h^{1,2}(X)\ge\underline h^{2,1}(X),
$$
and the equality
$$
\sigma(X)=\underline h^{2,2}(X)-\underline h^{1,1}(X).
$$
Additionally, $X$ is locally analytically $\Q$-factorial away from a finite set $\left\{x_s\right\}_{s\in S}$ of closed points on $X$, and
$$
\sum_{s\in S}\sigma^{\rm an}(X;x_s)-\sigma(X)=\underline h^{1,2}(X)-\underline h^{2,1}(X).
$$
\end{intro-theorem}

This theorem extends (and rephrases) another result of Namikawa-Steenbrink, \cite[Proposition 3.10]{NS95}, to arbitrary threefolds with rational singularities. The Hodge-Du Bois numbers appearing here are singular analogues of the Hodge numbers of a smooth projective variety, given by 
$$\h^{p,q} (X) := \dim_{\C} \HH^q (X, \DB_X^p),$$
where $\DB_X^p$ is the $p$-th Du Bois complex of $X$; see \S1. In the form 
$$\sigma (X) = \h^{n-1, n-1} (X) - \h^{1,1} (X),$$ 
the formula for $\sigma (X)$ holds in fact in arbitrary dimension; see Remark \ref{rmk:formula-sigma-DB}.

By \cite[Theorem A]{PP25a}, all the other Hodge-Du Bois numbers of a threefold with rational singularities  besides those in Theorem \ref{thm:analytic-Q-factoriality-threefolds} (i.e. those on the boundary of the Hodge diamond) satisfy the usual symmetries from the smooth case. As a consequence, we obtain another proof of \cite[Theorem D]{PP25a} in the projective case.

\begin{intro-corollary}\label{cor:RHM-threefolds}
Let $X$ be a projective threefold with rational singularities. Then the following are equivalent:

\noindent
(i) $X$ is a rational homology manifold. 

\noindent 
(ii) The Hodge-Du Bois diamond of $X$ satisfies full symmetry, i.e.
$$
\underline h^{p,q}(X)=\underline h^{q,p}(X)=\underline h^{3-p,3-q}(X)=\underline h^{3-q,3-p}(X)
$$
for all $0\le p,q\le 3$.

\noindent
(iii) $X$ is locally analytically $\Q$-factorial.
\end{intro-corollary}

Note that conditions (i) and (iii) are local, and their equivalence can be shown to hold even in the non-projective case. See \emph{loc. cit.} for more context on this statement, and for previous results in this direction in  \cite{Kollar89} and  \cite{GW2018}.

\smallskip
\noindent
{\bf Hodge-Du Bois numbers under threefold flips and flops.}
Another application is to determine the behavior of the Hodge-Du Bois numbers of a projective threefold with terminal singularities under flips and flops. By Kontsevich's work, it is known in arbitrary dimension that smooth $K$-equivalent varieties have the same Hodge numbers. This was explained by Batyrev \cite[Theorem 3.4]{Batyrev98}, who extended the statement to the stringy $E$-function of varieties with klt singularities. Note however that this is quite different from considering the Hodge-Du Bois numbers, and in dimension at least $4$ we do not expect all of these to be invariant under flops; cf. Conjecture \ref{conj:higher-dim-flops}.

For threefold terminal flops and flips, Koll\'ar \cite[Corollary 4.12]{Kollar89} proved that intersection cohomology is invariant. On the other hand, results in \cite{PP25a} imply that a projective threefold $X$ with rational singularities satisfies
$$
\underline h^{p,q}(X)=I\underline h^{p,q}(X) \,\,\,\,\,\, {\rm for}\,\,\,\, (p,q)\neq (1,1)\mathrm{\;and\;}(1,2),
$$
where on the right hand side we have the intersection cohomology Hodge numbers. Using Theorem \ref{thm:analytic-Q-factoriality-threefolds} as the new ingredient, we complete the picture for the Hodge-Du Bois numbers:

\begin{intro-theorem}\label{thm:invariance-Hodge}
Let $X$ be a projective threefold with $\Q$-factorial terminal singularities.

\noindent 
(i) Let $g:X\to Z$ be a flopping contraction. For a $\Q$-divisor $D\in \Div_\Q(X)$, let $X^+$ be the $D$-flop of $g$. If $X^+$ is $\Q$-factorial, then we have:
$$
\underline h^{p,q}(X)=\underline h^{p,q}(X^+) \,\,\,\,\,\,{\rm for ~all} \,\,\,\, 0\le p,q\le3.
$$
In particular, the Hodge-Du Bois numbers of two birational minimal models are the same.

\noindent
(ii) Let $g_R: X\to Z$ be the flipping contraction of a $K_X$-negative extremal ray. Let $g_R^+:X^+\to Z$ be the flip. Then,
$$
\underline h^{p,q}(X)=\underline h^{p,q}(X^+)  \,\,\,\,\,\,{\rm for ~all} \,\,\,\, (p,q)\neq (1,2),
$$
and $\underline h^{1,2}(X)\le\underline h^{1,2}(X^+)$.
\end{intro-theorem}

Note that in (i) the $\Q$-factoriality of $X^+$ is immediate if $D$ is effective and $g$ is the contraction of a $(K_X+\epsilon D)$-negative extremal ray (see \cite[Proposition 3.37]{KM98}). The last statement in (i) follows from the fact that a birational map between threefold minimal models can be written as a composition of $\Q$-factorial terminal 
flops, by \cite[Theorem 5.3]{Kawamata88} and \cite[Theorem 4.9]{Kollar89}.

\noindent
{\bf Further applications.}
Theorem \ref{thm:Q-factoriality-defect} has several other interesting consequences related to properties of the $\Q$-factoriality index $\sigma (X)$.
Here is an example, related to its behavior in families;  it generalizes \cite[Proposition 12.1.7]{KM92}, which is a similar statement about $\Q$-factoriality.

\begin{intro-corollary}
In a connected flat projective family of varieties with rational singularities, the set of varieties with fixed $\Q$-factoriality defect is constructible.
\end{intro-corollary}

We leave the other statements of applications for the main text, and only briefly describe them here. 

Still on the topic of deformations, relying on a result from \cite{ST23}, we deduce that klt singularities of pairs deform in a fixed ambient space; see Corollary \ref{cor: klt deforms X fixed}.

In a different direction, Theorem \ref{thm:Q-factoriality-defect} leads to the inequality $\sigma (D) \le \sigma (X)$ for a general hyperplane section $D$ of a variety $X$ of dimension at least $4$; this is already a consequence of \cite{RS06}. It also leads to a new criterion for having the equality $\sigma (D) = \sigma (X)$, which holds in a variety of situations; see Theorem \ref{thm: Bertini for defect of Q-factoriality} and Example \ref{ex:sigma-eq}. 

Another application, this time to the factoriality of hypersurfaces in fourfolds, is given in Section \ref{scn:fourfolds}. This is inspired by the paper 
\cite{PRS14}; we extend its main results from hypersurfaces in $\P^4$ to hypersurfaces in arbitrary smooth projective fourfolds, using Theorem \ref{thm:Q-factoriality-defect} in combination with methods from \emph{loc. cit.}

\smallskip
\noindent
{\bf Acknowledgements.} 
We thank  J\'anos Koll\'ar,  Mircea Musta\c t\u a, Vasudevan Srinivas and Claire Voisin for valuable discussions. We would especially like to thank J\'anos Koll\'ar for providing us an unpublished draft on local Picard groups, from which we extracted Lemma \ref{lem:Kollar}.

\section{\texorpdfstring{$\Q$}{Q}-factoriality and the Lefschetz (1,1)-theorem}

\subsection{Background}\label{scn:background}
This paper was written in conjunction with \cite{PP25a}, which covers the preliminaries needed here in detail. Rather than essentially repeating the introductory sections, we will mostly refer to \cite{PP25a} for details. We only review here some of the main definitions.

Let $X$ be a complex projective variety of dimension $n$. The singular cohomology groups $H^k (X,\Q)$ have a mixed Hodge structure with (increasing) Hodge filtration $F_\bullet$. The \emph{Hodge-Du Bois numbers} $\underline h^{p,q}$ are defined by
$$
\underline h^{p,q}(X):=\dim_\C \gr^F_{-p}H^{p+q}(X,\C).
$$
By the degeneration of the generalized Hodge-de  Rham  spectral sequence, we have isomorphisms 
$$H^{p,q} (X): = \gr^F_{-p}H^{p+q}(X,\C) \simeq  \HH^q(X,\DB_X^p),$$
where $\DB_X^p$ is the $p$-th Du Bois complex of $X$; see Sections 1 and 4 in \cite{PP25a} for details.

We often compare these notions with similar ones coming from the study of intersection cohomology. We set
$$
\mathit{IH}^{p,q}(X) : = \gr^F_{-p}\mathit{IH}^{p+q}(X,\C)=  \HH^q(X,I\DB_X^p).
$$
Here 
$$\mathit{IH}^{i + n} (X, \C) = \HH^i (X, \IC_X)$$
are the intersection cohomology groups of $X$, and $\IC_X$ is its $\Q$-intersection complex, underlying the \emph{intersection complex Hodge module} $\IC^H_X$ from \cite{Saito90}; moreover, $I\DB_X^p$ are the \emph{intersection Du Bois complexes} of $X$, i.e. the graded pieces of the filtered de Rham complex of $\IC^H_X$.  The \textit{intersection Hodge numbers} of $X$ are defined as 
$$I\underline h^{p,q}(X)  := \dim_\C \mathit{IH}^{p,q}(X).$$
For all this, see Sections 3 and 4 in \cite{PP25a}.

There is a distinguished triangle 
\begin{equation}\label{eqn:RHM-object}
\K_X^\bullet \longrightarrow \Q_X^H[n] \longrightarrow \IC_X^H\xrightarrow{+1}  
\end{equation}
defined by the canonical map from the trivial Hodge module to the intersection complex. The object $\K_X^\bullet$ in the derived category of mixed Hodge modules on $X$ is studied, and called the \emph{RHM-defect object of $X$}, in Section 6 of \cite{PP25a}.
Passing to cohomology  induces natural maps
$$H^{p, q} (X)  \to \mathit{IH}^{p,q}(X),$$
which in turn are Hodge pieces of the natural topological map 
$$H^{p+q} (X, \C) \to \mathit{IH}^{p+q} (X, \C).$$

Intersection cohomology is well known to satisfy Poincar\'e duality and the weak Lefschetz theorem \cite{GM88}. On the other hand, the extent 
to which is true for singular cohomology is dictated by the so-called \emph{local cohomological defect} 
$${\rm lcdef} (X) : = {\rm lcd} (X, Y) - \codim_Y X,$$
where $Y$ is any smooth variety containing $X$ (locally), and ${\rm lcd} (X, Y)$ is the local cohomological dimension of $X$ in $Y$.
See Section 2 in \cite{PP25a} for a discussion of this invariant, and Section 8 in \emph{loc. cit.} for the weak Lefschetz theorem for a hyperplane section of a projective $X$ that depends on this invariant. 

In particular, a variety $X$ with $\lcdef (X) = 0$ satisfies the exact same weak Lefschetz theorem as in the smooth case. This class includes Cohen-Macaulay varieties of dimension up to $3$ and rational homology manifolds; see the references in Section 2 of \cite{PP25a}. Characterizations of $\lcdef (X)$ for higher dimensional varieties, in terms of higher cohomological invariants, can be found in the Appendix of \cite{PP25a}.

For the background on Hodge modules needed in this paper, please see Section 5 in \cite{PP25a}.

\subsection{Lefschetz (1,1)-theorem for singular varieties}\label{scn:(1,1)}
The purpose of this section is to address the following Weil divisor version of the Lefschetz $(1,1)$-theorem on normal varieties. This is a useful tool
towards the proof of Theorem \ref{thm:Q-factoriality-defect}, but also a result of general interest, containing in particular a comparison between various notions of equivalence of Weil divisors.\footnote{Another extension of the Lefschetz $(1,1)$-theorem to normal varieties was found by Biswas-Srinivas \cite{BS00}, who describe 
${\rm NS} (X)$, partly in terms of the mixed Hodge structure on singular cohomology.} We explain the proof of (i) in this section, and the proof of (ii) in the next.

\begin{thm}\label{thm:Lefschetz-(1,1)} 
Let $X$ be a normal projective variety. Then: 

\noindent
(i)  There exists a cycle class morphism
$$
{\rm cl}_\Q \colon \Div_\Q(X)\to \mathit{IH}^2(X,\Q)\cap \mathit{IH}^{1,1}(X)
$$
which is surjective. 

\noindent
(ii)  The kernel of the morphism in (i) is $\mathrm{Hom}^1_\Q(X) = \mathrm{Alg}^1_\Q(X)$. In other words, on normal varieties, algebraic equivalence coincides with the homological equivalence for $\Q$-Weil divisors. Furthermore, if $X$ is $\Q$-factorial, then they also coincide with numerical equivalence:
$$
\mathrm{Alg}^1_\Q(X)=\mathrm{Hom}^1_\Q(X)=\mathrm{Num}^1_\Q(X).
$$
\end{thm}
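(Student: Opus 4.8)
The plan is to construct the cycle class morphism $\mathrm{cl}_\Q$ by combining the resolution-theoretic picture with the Hodge structure on intersection cohomology, and then to identify its target precisely. First I would take a resolution $\mu\colon \widetilde X\to X$ and recall that on the smooth variety $\widetilde X$ the classical Lefschetz $(1,1)$-theorem provides a surjective cycle class map $\Div_\Q(\widetilde X)\to H^2(\widetilde X,\Q)\cap H^{1,1}(\widetilde X)$. Since $X$ is normal, any Weil divisor $D$ has a well-defined strict transform $\widetilde D$ on $\widetilde X$, giving a map $\Div_\Q(X)\to \Div_\Q(\widetilde X)$; composing yields a class in $H^2(\widetilde X,\Q)$. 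The key structural input is the decomposition theorem of \cite{Saito90}, which realizes $\IC_X^H$ as a direct summand of $R\mu_* \Q_{\widetilde X}^H[n]$, and in particular realizes $\mathit{IH}^2(X,\Q)$ as a canonical Hodge-substructure of $H^2(\widetilde X,\Q)$ cut out by the perverse filtration. I would verify that the image of a strict transform lands in this summand, so that $\mathrm{cl}_\Q(D)$ naturally lives in $\mathit{IH}^2(X,\Q)$, and that its type is $(1,1)$ because the construction is algebraic and compatible with the Hodge filtration $F_\bullet$.

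Next I would prove surjectivity onto $\mathit{IH}^2(X,\Q)\cap \mathit{IH}^{1,1}(X)$. The natural approach is to identify $\mathit{IH}^2(X,\Q)$ inside $H^2(\widetilde X,\Q)$ as above and to observe that the complementary summands in the decomposition theorem are supported on the exceptional locus of $\mu$; hence the $(1,1)$-classes coming from $\mu$-exceptional divisors span exactly the difference between $H^2(\widetilde X,\Q)\cap H^{1,1}(\widetilde X)$ and its intersection-cohomology part. Concretely, given a rational $(1,1)$-class $\alpha$ in $\mathit{IH}^2(X,\Q)$, I would lift it to a $(1,1)$-class $\widetilde\alpha$ on $\widetilde X$ via the splitting, apply the smooth Lefschetz $(1,1)$-theorem to write $\widetilde\alpha$ as the class of a $\Q$-divisor $E$ on $\widetilde X$, push $E$ forward to $\Div_\Q(X)$, and check using the orthogonality in the decomposition that $\mathrm{cl}_\Q(\mu_* E)=\alpha$. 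This is the point where the new description of the target in terms of intersection cohomology does real work, and where one must argue carefully that the exceptional contributions drop out under $\mu_*$; I expect surjectivity, rather than well-definedness, to be the main obstacle, since it requires controlling the interaction between the perverse (intersection-cohomology) summand and the exceptional divisor classes, and matching Hodge types on both sides.

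Finally I would address independence of the resolution, so that $\mathrm{cl}_\Q$ is canonical: given two resolutions I would dominate both by a common one and use functoriality of the decomposition theorem together with the fact that strict transforms and the $\mathit{IH}^2$-summand are compatible under further blowups, the exceptional discrepancies again lying in the complementary summands. The statement that $X$ is only \emph{normal} (not smooth) is used here mainly to guarantee that Weil divisors have unambiguous strict transforms and that $\mathit{IH}^2(X,\Q)$ is a genuine summand; no deeper singularity hypothesis enters part (i). The essential ingredients are thus resolution of singularities, Saito's decomposition theorem identifying $\IC_X$ as a summand, and the classical Lefschetz $(1,1)$-theorem on the smooth model, with the genuinely new and delicate step being the precise identification of the image and the verification of surjectivity onto the intersection $(1,1)$-classes.
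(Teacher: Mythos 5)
Your proposal addresses only part (i) of the theorem; part (ii) is entirely absent. Nothing in your outline identifies the kernel of ${\rm cl}_\Q$ with $\mathrm{Hom}^1_\Q(X)$, proves that this equals $\mathrm{Alg}^1_\Q(X)$, or handles the coincidence with $\mathrm{Num}^1_\Q(X)$ under $\Q$-factoriality. These are not routine afterthoughts: the paper's argument for the first equality shows (via the Snake Lemma applied to the diagram relating $\Div_\Q(\widetilde X)\to\Div_\Q(X)$ and the two cycle class maps, together with the fact that the kernel of $H^2(\widetilde X,\Q)\to \mathit{IH}^2(X,\Q)$ is spanned by the classes of the $\mu$-exceptional divisors) that the kernel of the cycle class map is invariant under resolution, then imports $\mathrm{Hom}^1_\Q=\mathrm{Alg}^1_\Q$ from the smooth case and uses that algebraic equivalence is preserved by proper pushforward. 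The numerical-equivalence statement needs a further argument: cupping with $\eta^{n-2}$ for an ample class, Hard Lefschetz for intersection cohomology, the identification of $\mathit{IH}^{2n-2}(X,\Q)$ with the top weight graded piece of $H^{2n-2}(X,\Q)$, and the duality pairing with Borel--Moore homology. None of this is suggested in your plan, so as written the proof of the stated theorem is incomplete.

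Within part (i) your route is genuinely different from the paper's and is mostly workable, but one step would fail as stated: the class of the strict transform $\widetilde D$ does \emph{not} in general land in the $\IC_X$-summand of $H^2(\widetilde X,\Q)$ (it typically has nonzero components along the exceptional summands, e.g.\ when $D$ passes through the singular locus); what is true is that its image under the canonical surjection $H^2(\widetilde X,\Q)\to \mathit{IH}^2(X,\Q)$ is the desired class. Relatedly, your assertion that the exceptional divisor classes span exactly the kernel of that surjection is a nontrivial input (it requires both the linear independence of exceptional classes, proved in the paper by induction on dimension starting from Grauert's surface result, and a dimension count via the decomposition theorem), and your construction's independence of the resolution rests on it. For comparison, the paper defines ${\rm cl}_\Q$ intrinsically on $X$ via Borel--Moore homology, identifies the target with $\mathit{IH}^2(X,\Q)\cap \mathit{IH}^{1,1}(X)$ by a weight argument on the dual of the RHM-defect triangle, and obtains surjectivity from Jannsen's reduction of the homological Hodge conjecture to the smooth case; this sidesteps any well-definedness issue. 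Your lift-and-pushforward argument for surjectivity is fine in spirit and essentially equivalent to the smooth Lefschetz $(1,1)$ input, but it cannot stand until the kernel description and the omitted part (ii) are supplied.
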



We emphasize that much of this result is not new.  The statement in (i) is already known in a different form, due to Jannsen \cite{Jannsen90}, who reduced a singular version of the Hodge conjecture to the case of smooth varieties; our contribution to (i) is to express the right hand side in terms of the Hodge structure on Borel-Moore homology, and then intersection cohomology, as in Lemma \ref{lem: lowest weight of H2 Borel Moore}. This turns out to be very useful throughout. The method we use leads to the statement in part (ii),  as well as to the proof of Theorem \ref{thm:Q-factoriality-defect}. The first statement in (ii) is familiar to experts;  for instance, it can be deduced from the statement of \cite[Theorem 7.3]{BO74}, but there are also more elementary proofs. 

Turning to details, the cycle class map for smooth projective varieties is generalized in various contexts in the literature. We recall a version for arbitrary irreducible varieties. Given an irreducible variety $X$ of dimension $n$, there is a cycle class map with rational coefficients
$$
\cl_d\tensor \Q: Z_d(X)\tensor \Q\to H^{BM}_{2d}(X,\Q),
$$
where $Z_d(X)$ is a free abelian group of algebraic cycles of dimension $d$ in $X$, and $H^{BM}_{\bullet}(X,\Q)$ is the Borel-Moore homology
of $X$; see \cite[Section 19.1]{Fulton98}. The group of algebraic cycles of dimension $d$ with zero homology class is denoted by
$$
\mathrm{Hom}_\Q^{n-d}(X):=\ker(\mathrm{cl}_d\tensor \Q).
$$
It is well known that this cycle class map commutes with pushforward for proper morphisms.

Borel-Moore homology carries a mixed Hodge structure coming from Deligne's theory; see for instance \cite[6.24]{PS08}. On the other hand, 
it is naturally identified with the hypercohomology of the Verdier dualizing sheaf
$$
H^{BM}_{2d}(X,\Q)=\HH^{-2d}(X,\dual \Q_X),
$$
and thus inherits a mixed Hodge structure coming from Saito's theory, as in \cite[\S4.5]{Saito90}. These two mixed Hodge structures are known to be equivalent; see \emph{loc. cit.} or \cite{Saito00}. Consequently, the image of $\cl_d\tensor \Q$ is contained in
\begin{equation}
\label{eqn: cycle class target}
W_{-2d}\HH^{-2d}(X,\dual\Q_X)\cap F^{-d}\HH^{-2d}(X,\dual \C_X).
\end{equation}
(See, for example, \cite[Section 7.1.3]{PS08}). It is worth noting that when $X$ is a smooth projective variety, \eqref{eqn: cycle class target} is naturally isomorphic to the group of Hodge classes
$$
H^{2n-2d}(X,\Q)\cap H^{n-d,n-d}(X)
$$
by Poincar\'e duality, and the Hodge conjecture asserts the surjectivity of $\cl_d\tensor \Q$ onto this group.

For a normal projective variety $X$ and $d=n-1$, we have the following key identification.

\begin{lem}
\label{lem: lowest weight of H2 Borel Moore}
Let $X$ be a normal projective variety of dimension $n$. Then we have a canonical isomorphism of pure $\Q$-Hodge structures of weight $2$:
$$
\mathit{IH}^2(X,\Q)=\left(W_{-2n + 2} \HH^{-2n+2}(X, \dual \Q_X)\right) (-n).
$$
In particular, when $d=n-1$, \eqref{eqn: cycle class target} is canonically isomorphic to
$$
\mathit{IH}^2(X,\Q)\cap\mathit{IH}^{1,1}(X)
$$
where $\mathit{IH}^{1,1}(X)$ is the $(1,1)$-component of $\mathit{IH}^2(X,\Q)$.
\end{lem}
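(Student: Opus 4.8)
The plan is to apply Verdier--Saito duality to the triangle \eqref{eqn:RHM-object} and then read off the lowest weight piece of Borel--Moore homology. Recall that $\HH^{-2n+2}(X,\dual\Q_X)=H^{BM}_{2n-2}(X)$, and that $\mathit{IH}^2(X,\Q)=\HH^{2-n}(X,\IC_X)$ is pure of weight $2$ because $\IC_X^H$ is a pure Hodge module of weight $n$ on the projective variety $X$. Dualizing $\K_X^\bullet\to\Q_X^H[n]\to\IC_X^H\xrightarrow{+1}$, and using $\dual(\Q_X^H[n])=\dual\Q_X[-n]$ together with the self-duality $\dual\IC_X^H\isom\IC_X^H(n)$, produces a triangle
\begin{equation*}
\IC_X^H(n)\xrightarrow{\ \alpha\ }\dual\Q_X[-n]\longrightarrow\dual\K_X^\bullet\xrightarrow{+1}.
\end{equation*}
Taking $\HH^{2-n}$ gives a long exact sequence whose central map is $\alpha\colon \mathit{IH}^2(X,\Q)(n)\to H^{BM}_{2n-2}(X)$, and the whole argument reduces to showing that $\alpha$ is injective with image $W_{-2n+2}H^{BM}_{2n-2}(X)$. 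The one external input I would use is the weight estimate for the RHM-defect object from Section 6 of \cite{PP25a}: that $\K_X^\bullet$ has weights $\le n-1$, equivalently that $\IC_X^H$ is the top weight quotient $\gr^W_n\Q_X^H[n]$. Dually this says $\IC_X^H(n)=W_{-n}(\dual\Q_X[-n])$ is the lowest weight subobject, with quotient $\dual\K_X^\bullet$ of weights $\ge 1-n$.

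For injectivity of $\alpha$ I would first observe that $\K_X^\bullet\in {}^{p}\mathrm{D}^{\le 0}$: indeed $\Q_X^H[n]\in {}^{p}\mathrm{D}^{\le 0}$ since the support condition holds for the constant complex, and because $\IC_X$ is a simple perverse sheaf the induced map ${}^{p}\mathcal{H}^0(\Q_X[n])\to\IC_X$ is surjective, forcing ${}^{p}\mathcal{H}^j(\K_X^\bullet)=0$ for $j\ge 1$. Hence $\dual\K_X^\bullet\in {}^{p}\mathrm{D}^{\ge 0}$, and since it is supported on $\Sing X$, which has dimension at most $n-2$ by normality, the perverse hypercohomology spectral sequence yields $\HH^a(X,\dual\K_X^\bullet)=0$ for $a<2-n$. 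In particular $\HH^{1-n}(X,\dual\K_X^\bullet)=0$, so the connecting map into $\mathit{IH}^2(X,\Q)(n)$ vanishes and $\alpha$ is injective.

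For surjectivity onto the lowest weight piece I would use that $\dual\K_X^\bullet$ has weights $\ge 1-n$, so that $\HH^{2-n}(X,\dual\K_X^\bullet)$ has weights $\ge 3-2n$. As $\mathrm{coker}(\alpha)$ injects into this group, $\gr^W_{-2n+2}\mathrm{coker}(\alpha)=0$, i.e. $W_{-2n+2}H^{BM}_{2n-2}(X)\subseteq \mathrm{im}(\alpha)$. Conversely $\mathit{IH}^2(X,\Q)(n)$ is pure of weight $2-2n$ and $\alpha$ is injective, so $\mathrm{im}(\alpha)$ is pure of weight $2-2n$ and hence contained in $W_{-2n+2}H^{BM}_{2n-2}(X)$. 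Combining the two inclusions, $\alpha$ identifies $\mathit{IH}^2(X,\Q)(n)$ with $W_{-2n+2}H^{BM}_{2n-2}(X)$; twisting by $(-n)$ gives the asserted isomorphism of weight-$2$ Hodge structures.

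Finally, the statement about \eqref{eqn: cycle class target} for $d=n-1$ is a formal consequence. After the twist by $(-n)$, which shifts the Hodge filtration by $n$, the condition $F^{-d}=F^{-n+1}$ becomes $F^1$, so the intersection in \eqref{eqn: cycle class target} is carried to $\mathit{IH}^2(X,\Q)\cap F^1\mathit{IH}^2(X,\C)$; since $\mathit{IH}^2(X)$ carries a weight-$2$ Hodge structure, a rational class in $F^1$ has vanishing $(2,0)$-part (by comparison with its complex conjugate) and is therefore of type $(1,1)$, giving exactly $\mathit{IH}^2(X,\Q)\cap \mathit{IH}^{1,1}(X)$. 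The main obstacle in this plan is the weight bound that $\K_X^\bullet$ has weights $\le n-1$; every other step is perverse-degree and weight bookkeeping. I would either cite \cite{PP25a} for it, or, to keep the argument self-contained, prove directly that the constant Hodge module has top weight graded piece equal to $\IC_X^H$, which is precisely where normality of $X$ and the simplicity of the intersection complex would enter.
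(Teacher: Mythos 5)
Your argument is correct and follows the same route as the paper's proof: dualize the triangle \eqref{eqn:RHM-object}, use the self-duality $\dual\IC_X^H\isom\IC_X^H(n)$, kill $\HH^{-n+1}(X,\dual\K_X^\bullet)$ via the perversity and support bounds, and identify the image with the lowest weight piece using the weight bound on $\K_X^\bullet$ from \cite[Proposition 6.4]{PP25a}. Your write-up only differs in making explicit some steps the paper leaves implicit (the two inclusions pinning down $\mathrm{im}(\alpha)$, and the $(1,1)$-type argument at the end).
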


\begin{proof}
The dual of the distinguished triangle ($\ref{eqn:RHM-object}$) is
$$
\dual\IC_X^H\to(\dual\Q_X^H)[-n]\to\dual \K_X^\bullet\xrightarrow{+1}.
$$
Since we have the canonical polarization $\IC^H_X(n)=\dual\IC^H_X$, this induces a long exact sequence of mixed Hodge structures
$$
\cdots \to\HH^{-n+1}(X,\dual \K_X^\bullet)\to\HH^{-n+2}(X,\IC_X)(n)\to \HH^{-2n+2}(X, \dual \Q_X)\to \HH^{-n+2}(X,\dual \K_X^\bullet)\to \cdots.
$$
As $X$ is normal, the (perverse) cohomologies of $\K^\bullet_X$ are supported on a subvariety of dimension $\le n-2$. Moreover, since $\dual\K^\bullet_X\in D^{\ge 0}{\rm MHM}(X)$ and $\dual\K^\bullet_X$ is of weight $\ge -n+1$ by \cite[Proposition  6.4]{PP25a},  we have $\HH^{-n+1}(X,\dual \K_X^\bullet)=0$ (see \cite[Proposition 8.1.42]{HTT}), and $\HH^{-n+2}(X,\dual \K_X^\bullet)$ is a mixed Hodge structure of weight $\ge -2n+3$.

In conclusion, we have a natural inclusion
$$
\mathit{IH}^2(X,\Q)= \HH^{-n+2}(X,\IC_X)\hookrightarrow \HH^{-2n+2}(X,(\dual \Q_X))(-n)
$$
whose quotient is a mixed Hodge structure of weight $\ge 3$. This completes the proof of the first statement.
The last statement follows from the definition of a pure Hodge structure.
\end{proof}

Combining the general construction with Lemma \ref{lem: lowest weight of H2 Borel Moore}, when $X$ is a normal projective variety, we obtain a cycle class morphism
for Weil divisors on $X$, as in Theorem \ref{thm:Lefschetz-(1,1)}(i).

A homological version of the generalized Hodge conjecture \cite[Conjecture 7.12]{PS08} states that $\cl_d\tensor \Q$ surjects onto \eqref{eqn: cycle class target}, and using resolution of singularities Jannsen \cite[Theorem 7.9]{Jannsen90} proved that this version is true if it is true for smooth projective varieties. In particular, this holds when $d=n-1$, by the classical Lefschetz $(1,1)$-theorem.

Therefore, the surjectivity of $\cl_\Q$ in Theorem \ref{thm:Lefschetz-(1,1)} is an immediate consequence.

\begin{rmk}
Using the mixed Hodge module formalism, Saito \cite[4.5.18]{Saito90} constructed a cycle class map
$$
Z_d(X)\tensor \Q\to \mathrm{Hom}_{D^b{\rm MHM}({\rm pt})}(\Q_{\rm pt}^H, (a_X)_*(\dual\Q^H_X)(-d)[-2d]),
$$
where $a_X:X\to \mathrm{pt}$. Taking 0-th cohomologies on the right hand side, this induces a map
$$
Z_d(X)\tensor \Q\to \mathrm{Hom}_{{\mathrm {MHS}}}(\Q^H, \HH^{-2d}(X, \dual \Q^H_X(-d))),
$$
where $\Q^H$ is a trivial pure $\Q$-Hodge structure of weight $0$ and the right hand side is the group of morphisms of mixed Hodge structures. This group is naturally isomorphic to \eqref{eqn: cycle class target}, which offers another way to construct the cycle class map 
we consider.
\end{rmk}

\subsection{Algebraic, homological, and numerical equivalence of Weil divisors}
Following the terminology in Fulton \cite{Fulton98}, we denote by $\mathrm{Alg}^1_\Q(X)$ the group of $\Q$-Weil divisors algebraically equivalent to zero, and by $\mathrm{Num}^1_\Q(X)$ the group of $\Q$-Weil divisors numerically equivalent to zero. Here, a $\Q$-Weil divisor $\alpha$ is numerically equivalent to zero if
$$
\int_X P \cap \alpha=0
$$
for all polynomials $P$ in Chern classes of vector bundles on $X$. For a projective variety $X$, it is proven in \cite[Section 19.1]{Fulton98} that there are inclusions
$$
\mathrm{Alg}^1_\Q(X)\subset \mathrm{Hom}^1_\Q(X)\subset \mathrm{Num}^1_\Q(X)\subset \Div_\Q(X).
$$

Beyond the surjectivity of the cycle class morphism, the classical Lefschetz $(1,1)$-theorem for smooth projective varieties explains that the algebraic, homological, and numerical equivalence relations coincide. This is generalized by Theorem \ref{thm:Lefschetz-(1,1)}(ii), stating that for a normal projective $X$ 
we have $\mathrm{Alg}^1_\Q(X)=\mathrm{Hom}^1_\Q(X)$ (which as we mentioned is already known), while if in addition $X$ is $\mathbb{Q}$-factorial, they also coincide with  $\mathrm{Num}^1_\Q(X)$.

The proof uses a desingularization $\mu \colon \widetilde X\to X$ and establishes that the proper pushforward map
$$
\mu_*:\mathrm{Hom}^1_\Q(\widetilde X)\to \mathrm{Hom}^1_\Q(X).
$$
is an isomorphism. From this, it follows easily that $\mathrm{Alg}^1_\Q(X)=\mathrm{Hom}^1_\Q(X)$, as algebraic equivalence is preserved under proper pushforward.

For this purpose, we compare the (intersection) cohomology of the two spaces $X$ and $\widetilde X$. Note that there exists a natural surjection morphism
$$
H^2(\widetilde X,\Q)\to \mathit{IH}^2(X,\Q)
$$
of pure $\Q$-Hodge structures of weight $2$. Indeed, there exists a pushforward map 
$$\mu_*:\HH^{-2n+2}(\widetilde X,\dual \Q_{\widetilde X})\to\HH^{-2n+2}(X,\dual \Q_X)$$ 
of mixed Hodge structures on Borel-Moore homology, induced by the adjunction morphism $\mu_*\dual \Q_{\widetilde X}\to \dual \Q_X$. By Poincaré duality, the hypercohomology $\HH^{-2n+2}(\widetilde X,\dual \Q_{\widetilde X})(-n)$ is equal to the pure Hodge structure $H^2(\widetilde X,\Q)$ of weight $2$. By Lemma \ref{lem: lowest weight of H2 Borel Moore}, this implies that this pushforward map on hypercohomology gives the natural morphism $H^2(\widetilde X,\Q)\to \mathit{IH}^2(X,\Q)$ of pure Hodge structures of weight $2$, and also the induced pushforward map
$$
\mu_*:H^2(\widetilde X,\Q)\cap H^{1,1}(\widetilde X)\to \mathit{IH}^2(X,\Q)\cap \mathit{IH}^{1,1}(X).
$$
We describe the kernel of this morphism in the following:

\begin{prop}
\label{prop: kernel of H2 to IH2}
Let $X$ be a normal projective variety of dimension $n$. Let $\mu:\widetilde X\to X$ be a resolution of singularities, and denote by  $\left\{E_i\right\}_{i\in I}$ 
the set of exceptional divisors  in $\widetilde X$. Then
$$
\ker\big(H^2(\widetilde X,\Q)\to \mathit{IH}^{2}(X,\Q)\big)=\bigoplus_{i\in I}\Q\cdot \cl_\Q(E_i),
$$
that is the kernel of the natural surjection $H^2(\widetilde X,\Q)\to \mathit{IH}^{2}(X,\Q)$ has the set $\left\{\cl_\Q(E_i)\right\}_{i\in I}$ of cycle classes of exceptional divisors as a basis.
\end{prop}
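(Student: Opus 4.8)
The plan is to identify the kernel with the part of $H^2(\widetilde X,\Q)$ supported on the exceptional locus, and then to recognize that part as the span of the classes $\cl_\Q(E_i)$. Set $U:=X\sm\Sing(X)$ and $\widetilde U:=\mu^{-1}(U)$, so that $\mu$ restricts to an isomorphism $\widetilde U\isom U$, and $\mathrm{Exc}(\mu):=\widetilde X\sm\widetilde U$ is the exceptional locus, whose $(n-1)$-dimensional irreducible components are exactly the $E_i$. Write $\nu\colon H^2(\widetilde X,\Q)\to\mathit{IH}^2(X,\Q)$ for the surjection in question.

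First I would show $\ker\nu\subseteq\ker\big(H^2(\widetilde X,\Q)\xrightarrow{\rho}H^2(\widetilde U,\Q)\big)$, where $\rho$ is restriction. Since $\IC_X$ and $\Q_{\widetilde X}[n]$ both restrict to the constant complex over the smooth locus $U$, the construction of $\nu$ (as the Borel--Moore pushforward landing in the lowest weight piece, via \lemmaref{lem: lowest weight of H2 Borel Moore}) is compatible with restriction to $U$: there is a commutative square relating $\nu$ and $\rho$ through the restriction $\mathit{IH}^2(X,\Q)\to \mathit{IH}^2(U,\Q)=H^2(U,\Q)$ and the identification $H^2(\widetilde U,\Q)=H^2(U,\Q)$, and over $U$ the map $\nu$ becomes the identity. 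Hence a class killed by $\nu$ restricts to $0$ on $\widetilde U$. Establishing this compatibility cleanly from the definition of $\nu$ is, I expect, the most delicate point of the argument.

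Next I would compute $\ker\rho$ geometrically. The long exact sequence of the pair $(\widetilde X,\widetilde U)$ gives $\ker\rho=\mathrm{im}\big(H^2_{\mathrm{Exc}(\mu)}(\widetilde X,\Q)\to H^2(\widetilde X,\Q)\big)$. Because $\widetilde X$ is smooth, Poincar\'e--Lefschetz duality identifies $H^2_{\mathrm{Exc}(\mu)}(\widetilde X,\Q)\cong H^{BM}_{2n-2}(\mathrm{Exc}(\mu),\Q)$; as $\dim\mathrm{Exc}(\mu)=n-1$, this top Borel--Moore homology group is freely generated by the fundamental classes of its $(n-1)$-dimensional components $E_i$, and under duality the map to $H^2(\widetilde X,\Q)\cong H^{BM}_{2n-2}(\widetilde X,\Q)$ is the proper pushforward, sending $[E_i]$ to $\cl_\Q(E_i)$. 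Thus $\ker\rho=\sum_{i\in I}\Q\cdot\cl_\Q(E_i)$. For the reverse containment $\sum_i\Q\cdot\cl_\Q(E_i)\subseteq\ker\nu$, I would use again that under \lemmaref{lem: lowest weight of H2 Borel Moore} the map $\nu$ is the Borel--Moore pushforward $\mu_*$ and $\cl_\Q(E_i)$ corresponds to $[E_i]\in H^{BM}_{2n-2}(\widetilde X,\Q)$; since $\mu$ contracts $E_i$, i.e. $\dim\mu(E_i)\le n-2<n-1$, the pushforward $\mu_*[E_i]$ vanishes, and $\cl_\Q$ commutes with proper pushforward. Combining the two containments yields $\ker\nu=\sum_{i\in I}\Q\cdot\cl_\Q(E_i)$.

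It remains to see that the $\cl_\Q(E_i)$ are linearly independent, so that they form a basis. Suppose $\sum_ia_i\cl_\Q(E_i)=0$ in $H^2(\widetilde X,\Q)$; then the exceptional $\Q$-divisor $B:=\sum_ia_iE_i$ is numerically trivial on $\widetilde X$, hence in particular trivial on every $\mu$-contracted curve. Writing $B=P-N$ with $P,N\ge 0$ effective exceptional divisors having no common component, both $P-N$ and $N-P$ are $\mu$-nef (being numerically trivial), so the Negativity Lemma \cite[Lemma 3.39]{KM98} forces $N-P\ge 0$ and $P-N\ge 0$; therefore $P=N$, and since they share no component $P=N=0$ and all $a_i=0$. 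The main work thus lies in the compatibility step of the second paragraph and in this independence statement; as an alternative to the first two paragraphs, one could instead invoke the decomposition theorem to write $R\mu_*\Q_{\widetilde X}^H[n]\isom\IC_X^H\oplus\mathcal C$ with $\mathcal C$ supported on $\Sing(X)$, giving $\ker\nu=\HH^{-n+2}(X,\mathcal C)$ directly, at the cost of having to extract precisely the exceptional-divisor contribution from $\mathcal C$.
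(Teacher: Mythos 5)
Your proof is correct, but it reaches the conclusion by a genuinely different route than the paper. For the \emph{generation} of the kernel, the paper combines the containment $\cl_\Q(E_i)\in\ker\nu$ with a dimension count: Lemma~\ref{lem: difference of h2 and IH2} uses the Decomposition Theorem to show $h^2(\widetilde X)-Ih^2(X)=|I|$, so that $|I|$ linearly independent classes in the kernel must span it. You instead prove generation directly, by showing $\ker\nu\subseteq\ker\rho$ (restriction to the locus over which $\mu$ is an isomorphism) and identifying $\ker\rho$ with $\sum_i\Q\cdot\cl_\Q(E_i)$ via the local cohomology sequence and Alexander--Lefschetz duality $H^2_{\mathrm{Exc}(\mu)}(\widetilde X,\Q)\cong H^{BM}_{2n-2}(\mathrm{Exc}(\mu),\Q)$; this avoids both the Decomposition Theorem and any dimension count, and the compatibility you flag as delicate does go through, since the adjunction $\mu_*\dual\Q_{\widetilde X}\to\dual\Q_X$ restricts to an isomorphism over $U$ and Lemma~\ref{lem: lowest weight of H2 Borel Moore} realizes $\mathit{IH}^2(X,\Q)$ as a sub-Hodge structure of $\HH^{-2n+2}(X,\dual\Q_X)(-n)$. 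For \emph{linear independence}, the paper's Lemma~\ref{lem: linear independence of exceptional divisors} proceeds by induction on dimension via hyperplane sections down to Grauert's theorem for surfaces, whereas you invoke the Negativity Lemma; your argument is shorter in the global projective setting, but the paper deliberately proves its lemma for germs of normal analytic varieties because the same statement is reused in the proofs of Theorem~\ref{thm:local Lefschetz-(1,1)} and Theorem~\ref{thm:local analytic Q-factoriality defect}, where a purely analytic formulation is required. Two small points to tighten: you should take $U$ to be the complement of the locus where $\mu$ fails to be an isomorphism (as the paper does) rather than $X\sm\Sing(X)$, unless you assume the resolution is an isomorphism over the smooth locus; and with that choice the $(n-1)$-dimensional components of $\widetilde X\sm\widetilde U$ are exactly the exceptional divisors, which is what your duality step needs.
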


The proof of this proposition uses the next two lemmas. The first states that the classes of exceptional divisors are linearly independent, and the second states that these classes generate the kernel of the natural morphism appearing in the proposition. The first lemma is well-known to experts and we formulate this for normal analytic varieties, as it is used to study a local analogue of the Lefschetz (1,1)-theorem. Note that the cycle class map $\cl_\Q$ is defined for arbitrary analytic varieties and coincides with the first Chern class map for divisors on smooth analytic varieties.

\begin{lem}
\label{lem: linear independence of exceptional divisors}
Let $X$ be a normal analytic variety, and a projective morphism $\mu:\widetilde X\to X$ be a resolution of singularities with $\left\{E_i\right\}_{i\in I}$ the set of exceptional divisors in $\widetilde X$. If
$$
\sum_{i\in I}a_i\cdot \cl_\Q(E_i)= 0,
$$
for a finite $\Q$-linear combination of classes of exceptional divisors, then $a_i=0$ for all $i\in I$.
\end{lem}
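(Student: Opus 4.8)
The plan is to deduce the statement from the negative-definiteness of the relative intersection form of $\mu$, i.e. from the Negativity Lemma. Write $D:=\sum_{i\in I}a_i E_i$ for the finite $\Q$-linear combination in question, and assume $\cl_\Q(D)=\sum_i a_i\,\cl_\Q(E_i)=0$ in $H^2(\widetilde X,\Q)$; since $\widetilde X$ is smooth, $\cl_\Q(E_i)=c_1(\O_{\widetilde X}(E_i))$, so this is the vanishing of an honest cohomology class. Because the $E_i$ are distinct prime divisors, it suffices to prove that $D=0$ as a divisor.

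The first step is to convert the vanishing of the class into a relative numerical statement. As $\mu$ is projective, every curve $C\subset\widetilde X$ contracted by $\mu$ (i.e. with $\mu(C)$ a point) is compact, hence has a well-defined homology class $[C]$, and the intersection number $D\cdot C=\deg\big(\O_{\widetilde X}(D)|_C\big)=\langle \cl_\Q(D),[C]\rangle$ is just the Kronecker pairing of $\cl_\Q(D)$ with $[C]$. Since $\cl_\Q(D)=0$, this forces $D\cdot C=0$ for every $\mu$-contracted curve $C$; in other words $D\equiv_\mu 0$, that is, $D$ is numerically trivial relative to $\mu$. Note moreover that $D$ is $\Q$-Cartier (as $\widetilde X$ is smooth) and $\mu$-exceptional by construction, so $\mu_* D=0$. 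The same reasoning goes through verbatim in Borel–Moore homology, precisely because contracted curves are compact.

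The second step is to invoke the Negativity Lemma for the projective birational morphism $\mu$: a $\mu$-exceptional $\Q$-Cartier divisor that is numerically trivial relative to $\mu$ must vanish. Concretely, since $D\equiv_\mu 0$, both $D$ and $-D$ are $\mu$-nef with $\mu_*(\pm D)=0$ effective, so the lemma yields $D\ge 0$ and $-D\ge 0$, whence $D=0$ and therefore $a_i=0$ for all $i$.

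The only genuinely non-formal ingredient, and the step I expect to require the most care, is the validity of the Negativity Lemma in the present local analytic category. I would justify it exactly as in the algebraic case: cutting $X$ by general hyperplane sections through a general point of the center of a hypothetical nonzero component of $D$ reduces the statement to a projective birational morphism of surfaces $S\to T$, where it becomes the classical negative-definiteness of the intersection matrix of the exceptional curves of a resolution of a normal surface singularity (Mumford, Grauert) — a fact that is intrinsically local analytic. The subtle bookkeeping is that one does \emph{not} attempt to detect all the $E_i$ on a single surface section (a general section only sees exceptional divisors over codimension-two centers); rather, the reduction isolates one exceptional component at a time by slicing through its image, and the negativity of the resulting self-intersection contradicts $D\equiv_\mu 0$ unless that component is absent.
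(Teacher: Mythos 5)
Your proof is correct. The key conversion — from the vanishing of $\cl_\Q(D)$ in $H^2(\widetilde X,\Q)$ to the vanishing of $D\cdot C$ for every $\mu$-contracted curve $C$ via the Kronecker pairing (legitimate because such $C$ are compact, $\mu$ being proper) — is valid, and the two applications of the Negativity Lemma to $D$ and $-D$, using $\mu_*(\pm D)=0$, then force $D=0$ and hence $a_i=0$ for all $i$. This is, however, a genuinely different packaging from the paper's argument. The paper runs a direct induction on $\dim X$ at the level of cohomology classes: it localizes at a point, restricts $\sum_i a_i\,\cl_\Q(E_i)$ to a general hyperplane section (of $X$ when some center $\mu(E_j)$ with $a_j\neq 0$ has positive dimension, of $\widetilde X$ when all such centers are points), and descends to the normal-surface case, where it quotes Grauert's negative-definiteness of the exceptional intersection matrix. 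Your route outsources precisely this induction to the Negativity Lemma, whose standard proof is the same hyperplane-section reduction to the Mumford--Grauert surface statement; so the two arguments share the same engine but differ in the intermediate statement (numerical relative triviality versus restriction of cohomology classes). What you gain is a shorter, citation-based proof that sidesteps the case analysis on $\dim\mu(E_i)$; what the paper gains is self-containedness, since its only external input is the surface case and it never has to address the validity of the Negativity Lemma for projective bimeromorphic morphisms of non-compact analytic varieties --- the one point you correctly flag as requiring care, and which is indeed available in the literature (e.g.\ in Nakayama's treatment of Zariski decomposition in the analytic category) or provable exactly as you sketch.
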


\begin{proof}
We proceed by induction on $n = \dim X$. When $X$ is a normal surface, this is in \cite[p.367]{Grauert62}. Supposing now that the statement is true when the dimension is $n-1$, we prove it in dimension $n$.

For every point of $x\in X$, it suffices to prove that the coefficients of exceptional divisor over a small neighborhood of $x$ is zero. Hence, we assume that $x\in X$ is the germ of a normal analytic variety.
We argue by contradiction. Let
$$J:=\left\{i\in I | a_i\neq 0\right\}$$ 
and suppose $J$ is nonempty. Note first that it suffices to consider the case when $\mu(E_j)$ is supported at a closed point of $X$ for all $j\in J$. Indeed, say $a_j\neq 0$, with $\dim\mu(E_j)\ge 1$. Taking a general hyperplane section $H\subset X$, the restriction of the class of  the $\Q$-linear combination to $\mu^{-1}(H)$ is still zero:
$$
\sum_{i\in I}a_i\cdot \cl_\Q(E_i|_{\mu^{-1}(H)})= 0.
$$
Since $\mu|_{\mu^{-1}(H)}\colon \mu^{-1}(H)\to H$ is a resolution of singularities of the normal variety $H$ of dimension $n-1$, with exceptional divisors $E_i|_{\mu^{-1}(H)}$, we have $a_j=0$ by the induction hypothesis. This is a contradiction.

Assume now that $\mu(E_j)$ is a point for all $j\in J$. Take a general hyperplane section $\widetilde H\subset \widetilde X$. As above, we have
$$
\sum_{j\in J}a_j\cdot \cl_\Q(E_j|_{\widetilde H})= 0.
$$
Then the restriction morphism $\mu|_{\widetilde H}\colon \widetilde H\to \mu(\widetilde H)$ factors through the normalization of $\mu(\widetilde H)$, and $E_j|_{\widetilde H}$ are exceptional divisors for all $j\in J$. By the induction hypothesis, $a_j=0$ for all $j\in J$. This is a contradiction, which completes the proof.
\end{proof}

\begin{lem}
\label{lem: difference of h2 and IH2}
In the setting of Proposition \ref{prop: kernel of H2 to IH2}, we have
$$
h^2(\widetilde X)-  Ih^2(X)=|I|.
$$
\end{lem}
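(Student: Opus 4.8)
The plan is to reinterpret the left-hand side as the dimension of a kernel. The discussion preceding Proposition~\ref{prop: kernel of H2 to IH2} produces a \emph{surjection} of pure weight-$2$ Hodge structures $H^2(\widetilde X,\Q)\twoheadrightarrow \mathit{IH}^2(X,\Q)$, so that $h^2(\widetilde X)-Ih^2(X)=\dim_\Q\ker\big(H^2(\widetilde X,\Q)\to \mathit{IH}^2(X,\Q)\big)$. My first move is to record that, by that same discussion together with Lemma~\ref{lem: lowest weight of H2 Borel Moore}, this surjection is nothing but the Borel--Moore pushforward
$$
\mu_*\colon H^{BM}_{2n-2}(\widetilde X,\Q)=\HH^{-2n+2}(\widetilde X,\dual\Q_{\widetilde X})\longrightarrow \HH^{-2n+2}(X,\dual\Q_X)=H^{BM}_{2n-2}(X,\Q),
$$
twisted by $(-n)$ and corestricted to its image $W_{-2n+2}\HH^{-2n+2}(X,\dual\Q_X)(-n)=\mathit{IH}^2(X,\Q)$. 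Since $\mathit{IH}^2(X,\Q)\hookrightarrow H^{BM}_{2n-2}(X,\Q)(-n)$ is injective, the kernel I want equals $\ker\big(\mu_*\colon H^{BM}_{2n-2}(\widetilde X,\Q)\to H^{BM}_{2n-2}(X,\Q)\big)$, and the whole problem reduces to computing this one kernel.

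To compute it, let $U\subseteq X$ be the open locus over which $\mu$ is an isomorphism, let $Z=X\smallsetminus U$ (so $\codim_X Z\ge 2$ because $X$ is normal), and let $E=\mu^{-1}(Z)$, so that $\mu^{-1}(U)\cong U$. Because $\codim_X Z\ge 2$, the $(n-1)$-dimensional components of $E$ are precisely the exceptional divisors $E_i$, whence $H^{BM}_{2n-2}(E,\Q)=\bigoplus_{i\in I}\Q\,[E_i]$. The proper morphism $\mu$ induces a morphism between the Borel--Moore localization sequences of $(E\subset\widetilde X)$ and $(Z\subset X)$, giving the commutative ladder
$$
\begin{array}{ccccc}
H^{BM}_{2n-2}(E) & \xrightarrow{a} & H^{BM}_{2n-2}(\widetilde X) & \xrightarrow{r} & H^{BM}_{2n-2}(U)\\
\downarrow & & \downarrow & & \parallel\\
H^{BM}_{2n-2}(Z) & \xrightarrow{a'} & H^{BM}_{2n-2}(X) & \xrightarrow{r'} & H^{BM}_{2n-2}(U)
\end{array}
$$
in which the left vertical is $\mu_*^E$, the middle one is $\mu_*$, and the right one is the identity (as $\mu$ restricts to an isomorphism over $U$). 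The key input is the vanishing $\mu_*^E=0$: each $E_i$ is $\mu$-exceptional, so $\dim\mu(E_i)\le n-2$, and therefore $(\mu|_E)_*[E_i]$ lands in $H^{BM}_{2n-2}(\mu(E_i))=0$.

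Granting this, the kernel falls out of a two-line diagram chase. Commutativity of the left square together with $\mu_*^E=0$ gives $\mu_*\circ a=a'\circ\mu_*^E=0$, so $\mathrm{im}(a)=\ker(r)\subseteq\ker(\mu_*)$; conversely, if $\mu_*(\xi)=0$ then $r(\xi)=r'(\mu_*(\xi))=0$ by the right square, whence $\xi\in\ker(r)=\mathrm{im}(a)$. Thus $\ker(\mu_*)=\mathrm{im}(a)$. Finally, under Poincar\'e duality on $\widetilde X$ the map $a$ sends $[E_i]$ to the class dual to $\cl_\Q(E_i)$, so Lemma~\ref{lem: linear independence of exceptional divisors} shows $a$ is injective; hence $\dim_\Q\ker(\mu_*)=\dim_\Q H^{BM}_{2n-2}(E,\Q)=|I|$, as desired.

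I expect the main obstacle to be the identification in the first paragraph, namely checking carefully that $\ker\big(H^2(\widetilde X)\to \mathit{IH}^2(X)\big)$ really coincides with the kernel of the Borel--Moore pushforward (this is exactly where Lemma~\ref{lem: lowest weight of H2 Borel Moore} and the strictness of morphisms of mixed Hodge structures are used), and in setting up the functoriality of the two localization sequences under $\mu$ so that the ladder commutes. Once the diagram is in place, the vanishing $\mu_*^E=0$ (the only point where the exceptionality $\dim\mu(E_i)\le n-2$ enters) and the injectivity of $a$ from Lemma~\ref{lem: linear independence of exceptional divisors} make the final count immediate.
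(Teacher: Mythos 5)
Your proof is correct, but it takes a genuinely different route from the paper's. The paper first passes to degree $2n-2$ by Poincar\'e duality and then does a pure dimension count: the Decomposition Theorem gives $R\mu_*\Q_{\widetilde X}[n]\simeq \IC_X\oplus\M^\bullet$ with $\M^\bullet$ supported on $Z$, so that $h^{2n-2}(\widetilde X)-Ih^{2n-2}(X)=\dim_\Q\HH^{n-2}(Z,\M^\bullet)$, and then proper base change together with the perversity bound $\iota^*\IC_X\in{}^p\!D^{\le-1}_c(Z,\Q)$ identifies this number with $h^{2n-2}(\mu^{-1}(Z))=|I|$, the count of $(n-1)$-dimensional components of the exceptional locus. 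You instead stay in Borel--Moore degree $2n-2$ on the source, replace the Decomposition Theorem by the localization exact sequence for the pairs $(E\subset\widetilde X)$ and $(Z\subset X)$ and its functoriality under the proper map $\mu$, and use the vanishing $H^{BM}_{2n-2}(Z)=0$ (as $\dim Z\le n-2$) plus Lemma \ref{lem: linear independence of exceptional divisors} to identify the kernel of $\mu_*$ explicitly with $\bigoplus_{i\in I}\Q\cdot\cl_\Q(E_i)$. Each approach buys something: yours produces the kernel itself, not just its dimension, so it proves Proposition \ref{prop: kernel of H2 to IH2} in one stroke rather than assembling it afterwards from the dimension count and the linear independence lemma; the paper's version keeps those two inputs logically independent and, more importantly, transports verbatim to the stalkwise local analytic setting of Lemma \ref{lem:local difference of H2 and IH2}, which is why it is phrased through $\M^\bullet$. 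Two small caveats on your write-up: the surjectivity of $H^2(\widetilde X,\Q)\to\mathit{IH}^2(X,\Q)$, which you need to convert the left-hand side into a kernel dimension, is itself a consequence of the Decomposition Theorem (inherited from the discussion preceding the Proposition), so that theorem is not fully avoided, only relocated; and your claim that the $(n-1)$-dimensional components of $E=\mu^{-1}(Z)$ are exactly the exceptional divisors uses $\codim_X Z\ge 2$, which you correctly attribute to normality of $X$ --- both points are fine, but worth stating explicitly.
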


\begin{proof}
By Poincaré duality, it suffices to show that
$$
h^{2n-2}(\widetilde X)-  Ih^{2n-2}(X)=|I|.
$$
By the Decomposition Theorem, we have
$$
R\mu_*\Q_{\widetilde X}[n] \simeq \IC_X[0]\oplus \M^\bullet
$$
where $\M^\bullet$ is supported inside the locus $Z\subset X$ where $\mu$ is not an isomorphism. Consequently, we have an isomorphism of hypercohomologies
$$
H^{2n-2}(\widetilde X,\Q)\simeq \mathit{IH}^{2n-2}(X,\Q)\oplus \HH^{n-2}(Z,\M^\bullet).
$$

Denote by $\iota:Z\hookrightarrow X$ the closed embedding. Note that $\dim Z\le n-2$. By the proper base change theorem, we have
$$
R\mu_*\Q_{\mu^{-1}(Z)}[n]=\iota^*\IC_X[0]\oplus \M^\bullet.
$$
Since $\iota^*\IC_X[0]\in {}^p\!D^{\le -1}_c(Z,\Q)$, we have
$$
\HH^{n-2}(Z,\iota^*\IC_X)=0.
$$
(See e.g. \cite[Propositions 8.1.42, 8.2.5]{HTT}.) Therefore we obtain
$$
\dim_\Q\HH^{n-2}(Z,\M^\bullet)=h^{2n-2}(\mu^{-1}(Z),\Q)=|I|,
$$
which completes the proof.
\end{proof}

Next, we prove Proposition \ref{prop: kernel of H2 to IH2} and Theorem  \ref{thm:Lefschetz-(1,1)}(ii).

\begin{proof}[Proof of Proposition \ref{prop: kernel of H2 to IH2}]
Exceptional divisors map to zero by the proper pushforward map, so the classes of exceptional divisors 
$\cl_\Q(E_i)$ are contained in the kernel of the natural surjective morphism $H^2(\widetilde X,\Q)\to \mathit{IH}^2(X,\Q)$.
Since Lemma \ref{lem: linear independence of exceptional divisors} proves that these classes are linearly independent, we conclude from Lemma \ref{lem: difference of h2 and IH2} that they form a basis of the kernel.
\end{proof}

\begin{proof}[Proof of Theorem  \ref{thm:Lefschetz-(1,1)}(ii)]
Recall the discussion preceding Proposition \ref{prop: kernel of H2 to IH2}, which leads to a  commutative diagram of proper pushforward and cycle class morphisms:
\begin{equation*}
\xymatrix{
{\Div_\Q(\widetilde X)}\ar[d]_{\cl_\Q^{\widetilde X}}\ar[r]^-{\mu_*}& {\Div_\Q(X)}\ar[d]_{\cl_\Q^X}\\
{H^2(\widetilde X,\Q)\cap H^{1,1}(\widetilde X)}\ar[r]^-{\mu_*}&{\mathit{IH}^2(X,\Q)\cap \mathit{IH}^{1,1}(X)}
}
\end{equation*}
Note that all the maps are surjective; for the right-most, we use Theorem \ref{thm:Lefschetz-(1,1)}(i). Additionally,
$$
\ker(\mu_*:\Div_\Q(\widetilde X)\to \Div_\Q(X))=\bigoplus_{i\in I}\Q\cdot E_i
$$
where $\left\{E_i\right\}_{i\in I}$ is the set of $\mu$-exceptional divisors  in $\widetilde X$, and
$$
\ker(\mu_*:H^2(\widetilde X,\Q)\cap H^{1,1}(\widetilde X)\to \mathit{IH}^2(X,\Q)\cap \mathit{IH}^{1,1}(X))=\bigoplus_{i\in I}\Q\cdot \cl_\Q(E_i)
$$
by Proposition \ref{prop: kernel of H2 to IH2}. Indeed, for the second equality, it is easy to check that 
$$
\gr^F_0H^2(\widetilde X,\C)=\gr^F_0\mathit{IH}^2(X,\C)
$$
(see \cite[Lemma 4.2]{Park23} for details), which implies that
$$
\ker(H^2(\widetilde X,\Q)\cap H^{1,1}(\widetilde X)\to \mathit{IH}^2(X,\Q)\cap \mathit{IH}^{1,1}(X))=\ker(H^2(\widetilde X,\Q)\to \mathit{IH}^2(X,\Q)).
$$
Consequently, applying the Snake Lemma, we deduce
\begin{equation}
\label{eqn: kernel of cycle class map is desingularization invariant}
\ker(\Div_\Q(\widetilde X)\to H^2(\widetilde X,\Q)\cap H^{1,1}(\widetilde X))=\ker(\Div_\Q(X)\to \mathit{IH}^2(X,\Q)\cap \mathit{IH}^{1,1}(X)),
\end{equation}
that is, the kernel of the cycle class morphism is invariant under a resolution of singularities. This implies that
$$
\mu_*: \mathrm{Hom}^1_\Q(\widetilde X)\to \mathrm{Hom}^1_\Q(X)
$$
is an isomorphism. From the classical Lefschetz $(1,1)$-theorem, we have $\mathrm{Hom}^1_\Q(\widetilde X)=\mathrm{Alg}^1_\Q(\widetilde X)$. Additionally, algebraic equivalence is preserved under the proper pushforward $\mu_*$; see \cite[Proposition 10.3]{Fulton98}. Therefore, $\mathrm{Hom}^1_\Q(X)=\mathrm{Alg}^1_\Q(X)$.

Next, assume $X$ is $\Q$-factorial, or equivalently, $\CDiv_\Q(X)=\Div_\Q(X)$. Consider the composition
$$
\CDiv_\Q(X)\to H^2(X,\Q)\to \mathit{IH}^2(X,\Q)
$$
where the first map is the first Chern class map. This composition is the cycle class map in Theorem \ref{thm:Lefschetz-(1,1)}(i) restricted to $\Q$-Cartier divisors; see \eqref{eqn: divisor class commuting diagram} below for a more detailed explanation. Thus by the assumption, $\CDiv_\Q(X)$ maps surjectively onto the $(1,1)$-component of $\mathit{IH}^2(X,\Q)$.

Let $\eta\in H^2(X,\Q)$ be the class of an ample divisor. Then we have the commutative diagram:
\begin{equation*}
\xymatrix{
{\CDiv_\Q(X)}\ar[r]&{H^2(X,\Q)}\ar[d]_{\eta^{n-2}\cdot}\ar[r]&{\mathit{IH}^2(X,\Q)}\ar[d]_{\eta^{n-2}\cdot} \\
{}&{H^{2n-2}(X,\Q)}\ar[r]&{\mathit{IH}^{2n-2}(X,\Q)}
}
\end{equation*}
where the vertical maps are the cup products with the class $\eta^{n-2}$. In particular, the vertical maps are morphisms of (mixed) Hodge structures up to Tate twist, and the right-most is an isomorphism by the Hard Lefschetz theorem for intersection cohomology. The dual of Lemma \ref{lem: lowest weight of H2 Borel Moore} says that $\mathit{IH}^{2n-2}(X,\Q)$ is the top weight graded piece of $H^{2n-2}(X,\Q)$:
$$
\mathit{IH}^{2n-2}(X,\Q)=\gr^W_{2n-2}H^{2n-2}(X,\Q).
$$
Thus, $\eta^{n-2}\cdot \CDiv_\Q(X)$ maps surjectively onto the $(n-1,n-1)$-component of $\gr^W_{2n-2}H^{2n-2}(X,\Q)$.

On the other hand, $\Div_\Q(X)$ maps surjectively onto the $(-n+1,-n+1)$-component of 
$$
\gr^W_{-2n+2}H_{2n-2}(X,\Q)
$$
by Theorem \ref{thm:Lefschetz-(1,1)}(i). Using the perfect pairing
$$
\gr^W_{-2n+2}H_{2n-2}(X,\Q)\times \gr^W_{2n-2}H^{2n-2}(X,\Q)\to \Q,
$$
it then follows that $\mathrm{Hom}^1_\Q(X)=\mathrm{Num}^1_\Q(X)$.
\end{proof}

\begin{rmk}
In fact, one can prove $\mathrm{Alg}^1_\Q(X)=\mathrm{Hom}^1_\Q(X)=\mathrm{Num}^1_\Q(X)$ when
$$
\dim_\Q(\mathit{IH}^2(X,\Q)\cap \mathit{IH}^{1,1}(X))=\dim_\Q (\ker(H^2(X,\Q)\to H^2(X,\O_X)))
$$
using results in the next section. This condition is potentially weaker than $\Q$-factoriality, as seen in Theorem \ref{thm:Q-factoriality-defect}.
\end{rmk}

For later use, we also record the following result regarding varieties with normal or rational singularities.

\begin{lem}
\label{lem: H1 and H2 of normal variety}
Let $X$ be a normal projective variety of dimension $n$, and let $\mu\colon \widetilde X\to X$ be a resolution of singularities. Then, $h^{2n-1}(X)= Ih ^{2n-1}(X)= Ih^{1}(X)$ and we have canonical isomorphisms
$$
\gr^F_0H^1(X,\C) \simeq H^1(X,\O_X) \quad \mathrm{and}\quad H^1(\widetilde X,\Q)\simeq \mathit{IH}^1(X,\Q).
$$
Furthermore, if $X$ has rational singularities, then $h^{2n-2}(X)= Ih^{2n-2}(X)= Ih^2(X)$.
\end{lem}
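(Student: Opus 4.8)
The plan is to treat the three kinds of equalities separately, drawing on the tools already set up: Poincar\'e duality for intersection cohomology, the RHM-defect triangle \eqref{eqn:RHM-object} together with the bounds on $\K_X^\bullet$, the Decomposition Theorem for $\mu$, and the Du Bois description of $\gr^F_0$. The identities $Ih^{2n-1}(X)=Ih^1(X)$ and (in the rational case) $Ih^{2n-2}(X)=Ih^2(X)$ are immediate from Poincar\'e duality for $\IC_X$, so the substance lies in comparing singular (or desingularized) cohomology with intersection cohomology in the extreme degrees $1,2,2n-2,2n-1$, and in pinning down the Hodge piece $\gr^F_0 H^1$.

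First I would prove $h^{2n-1}(X)=Ih^{2n-1}(X)$ by applying $\HH^\bullet(X,-)$ to \eqref{eqn:RHM-object}, obtaining
\[
\cdots \to \HH^{k}(X,\K_X^\bullet)\to H^{k+n}(X,\Q)\to \mathit{IH}^{k+n}(X,\Q)\to \HH^{k+1}(X,\K_X^\bullet)\to\cdots.
\]
Since $\dual\K_X^\bullet\in D^{\ge 0}{\rm MHM}(X)$ by \cite[Proposition 6.4]{PP25a}, dualizing gives $\K_X^\bullet\in {}^pD^{\le 0}(X)$; as $X$ is projective and $(a_X)_*=(a_X)_!$ is right $t$-exact for the perverse $t$-structure, this forces $\HH^i(X,\K_X^\bullet)=0$ for all $i>0$. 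Taking $k=n-1$, both flanking terms $\HH^{n-1}(X,\K_X^\bullet)$ and $\HH^{n}(X,\K_X^\bullet)$ vanish (for $n\ge 2$; the curve case is trivial), and $H^{2n-1}(X,\Q)\cong \mathit{IH}^{2n-1}(X,\Q)$ follows. The same computation with $k=n-2$ handles $h^{2n-2}(X)=Ih^{2n-2}(X)$ whenever $n\ge 3$; for $n=2$ the remaining obstruction lies in $\HH^0(X,\K_X^\bullet)$, which I would eliminate using the rational-singularities hypothesis. This is the only place that hypothesis enters the degree-$(2n-2)$ statement.

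Next I would establish $H^1(\widetilde X,\Q)\cong \mathit{IH}^1(X,\Q)$ via the Decomposition Theorem $R\mu_*\Q_{\widetilde X}[n]\simeq \IC_X\oplus\M^\bullet$, exactly as in the proof of Lemma \ref{lem: difference of h2 and IH2}. This identifies $H^1(\widetilde X,\Q)$ with $\mathit{IH}^1(X,\Q)\oplus\HH^{-n+1}(X,\M^\bullet)$, so everything reduces to the vanishing $\HH^{-n+1}(X,\M^\bullet)=0$. Because $X$ is normal, $\mu$ is an isomorphism in codimension $1$, so $\M^\bullet$ is a sum of shifted intersection complexes $\IC_{\overline W}(L)[-j]$ supported on the exceptional strata, whose fibers over a codimension-$c$ stratum have dimension at most $c-1$. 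The support conditions of the Decomposition Theorem then bound the perverse degrees $j$ in which each $\IC_{\overline W}(L)$ can occur, and a direct count of the resulting contribution $\mathit{IH}^{\ast}(\overline W,L)$ shows that none of them reaches cohomological degree $-n+1$; intuitively, exceptional classes live in $H^2$ and higher, not in $H^1$. This is the technical heart of the lemma, and the step I expect to be the main obstacle.

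Finally, for $\gr^F_0 H^1(X,\C)\simeq H^1(X,\O_X)$ I would use the degeneration identification $\gr^F_0 H^1(X,\C)=\HH^1(X,\DB_X^0)$ recalled in the background, together with the natural map $\O_X\to\DB_X^0$, which for normal (hence seminormal) $X$ is an isomorphism on $\H^0$; a low-degree comparison then transfers this to an isomorphism on $\HH^1$. The only nontrivial input is the vanishing controlling the $\H^{\ge 1}$-part of $\DB_X^0$ in degree one, which I expect to follow from normality as a standard property of the Du Bois complex. Assembling the four pieces --- Poincar\'e duality, the two $\K_X^\bullet$-computations, the Decomposition Theorem vanishing, and the Du Bois identification --- completes the proof.
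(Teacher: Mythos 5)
Your overall architecture matches the paper's (the RHM-defect triangle \eqref{eqn:RHM-object} for the comparison of $h^k$ with $Ih^k$, the Decomposition Theorem for $H^1(\widetilde X)\simeq \mathit{IH}^1(X)$, and the Du Bois complex for $\gr^F_0H^1$), but two of your key steps rest on false claims.

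First, $(a_X)_!=(a_X)_*$ is \emph{not} right $t$-exact for the perverse $t$-structure when $X$ is projective; right $t$-exactness of $f_!$ is Artin vanishing for \emph{affine} morphisms. (Already $\Q_{\P^1}[1]$ is perverse and $\HH^1(\P^1,\Q_{\P^1}[1])=H^2(\P^1,\Q)\neq 0$.) Consequently your intermediate vanishing $\HH^i(X,\K_X^\bullet)=0$ for all $i>0$ is false in general. The correct input, which is what the paper uses, is the support bound: $\K_X^\bullet\in {}^p\!D^{\le 0}$ is supported on $\Sing(X)$, of dimension $\le n-2$ by normality, so by \cite[Proposition 8.1.42]{HTT} one only gets $\HH^i(X,\K_X^\bullet)=0$ for $i>n-2$. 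That suffices for $h^{2n-1}(X)=Ih^{2n-1}(X)$, but it does \emph{not} give $\HH^{n-2}(X,\K_X^\bullet)=0$, which is exactly what the degree-$(2n-2)$ statement needs. Your claim that rational singularities is only needed when $n=2$ is therefore wrong: for a normal threefold with a one-dimensional non-rational singular locus (e.g.\ a product of a non-rational normal surface with a curve), $\HH^{n-2}(X,\K_X^\bullet)\neq 0$ and $h^{2n-2}(X)\neq Ih^{2n-2}(X)$. The paper uses rational singularities in every dimension, via \cite[Corollary 7.5]{PP25a}, to push the support of $\K_X^\bullet$ down to dimension $\le n-3$ and hence kill $\HH^{n-2}$.

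Second, for $\gr^F_0H^1(X,\C)\simeq H^1(X,\O_X)$: normality gives $\H^0(\DB_X^0)=\O_X$ (seminormality), which yields the \emph{injectivity} of $H^1(X,\O_X)\to\HH^1(X,\DB_X^0)$ via the low-degree exact sequence, but the vanishing of $\H^1(\DB_X^0)$ that you invoke for surjectivity does \emph{not} follow from normality --- normal varieties need not be Du Bois (e.g.\ cones over abelian varieties), and for those $\H^{\ge 1}(\DB_X^0)\neq 0$. Surjectivity must instead be obtained as in the paper: $H^1(X,\C)\twoheadrightarrow \gr^F_0H^1(X,\C)=\HH^1(X,\DB_X^0)$ by the degeneration of the Hodge--de Rham spectral sequence, and this map factors through $H^1(X,\O_X)$ because $\C_X\to\DB_X^0$ factors through $\O_X$. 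Your treatment of $H^1(\widetilde X,\Q)\simeq\mathit{IH}^1(X,\Q)$ via the support conditions on the summands of $R\mu_*\Q_{\widetilde X}[n]$ is a correct plan (and essentially what the reference \cite[Lemma 4.2]{Park23} carries out), though you leave the dimension count unexecuted.
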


\begin{proof}
Recall from \cite[Section 6]{PP25a} that the ${\rm RHM}$-defect object satisfies $\K^\bullet_X\in D^{\le0}{\rm MHM}(X)$, and has cohomologies supported in $\Sing(X)$. 
Using \cite[Proposition 8.1.42]{HTT}, this implies $\HH^{i}(X,\K^\bullet_X)=0$ for $i\ge 2n-1$, which in turn immediately gives $h^{2n-1}(X)=Ih^{2n-1}(X)$
by passing to cohomology in the exact triangle ($\ref{eqn:RHM-object}$).

Likewise, if $X$ has rational singularities, then $\K^\bullet_X$ has cohomologies supported in a subvariety of dimension at most $\le n-3$, by case $k=0$ in \cite[Corollary 7.5]{PP25a}. This implies the vanishing $\HH^{i}(X,\K^\bullet_X)=0$ for $i\ge 2n-2$, hence $h^{2n-2}(X)= Ih^{2n-2}(X)$.

Note now that the canonical morphism $\C_X\to \DB_X^0$ factors through $\O_X$. Since the map $H^1(X,\C)\to \HH^1(X,\DB^0_X)$ is surjective by the degeneration of 
the Hodge-to-de Rham spectral sequence in the singular case, the map $H^1(X,\O_X)\to \HH^1(X,\DB^0_X)$ 
is surjective as well. Additionally, this map is also injective using a simple spectral sequence argument, since $\H^0(\DB_X^0)=\O_X$ by the normality of $X$ (see e.g. \cite[Proposition 5.2]{Saito00}). Therefore, we have a canonical isomorphism
$$
H^1(X,\O_X) \simeq \HH^1(X,\DB^0_X).
$$

Lastly, a canonical isomorphism $H^1(\widetilde X,\Q) \simeq \mathit{IH}^1(X,\Q)$ is obtained from the canonical pushforward morphism
$$
H^1(\widetilde X,\Q)=\HH^{-2n+1}(\widetilde X,(\dual\Q^H_{\widetilde X})(-n))\xrightarrow{\mu_*} \HH^{-2n+1}(X,(\dual\Q^H_X)(-n))
$$
and the canonical isomorphism
$$
\mathit{IH}^1(X,\Q)=\HH^{-2n+1}(X,(\dual\Q^H_X)(-n))
$$
obtained by dualizing the argument in the proof of the equality $h^{2n-1}(X)=Ih^{2n-1}(X)$. By the Decomposition Theorem, it is easy to see that $H^1(\widetilde X,\Q)\to \mathit{IH}^1(X,\Q)$ is surjective, and the two spaces have the same dimension (see e.g. \cite[Lemma 4.2]{Park23}). Hence the morphism above is an isomorphism.
\end{proof}

\subsection{The \texorpdfstring{$\Q$}{Q}-factoriality defect}
For a normal variety $X$, we consider the \textit{defect of $\Q$-factoriality}
$$
\sigma(X):=\dim_\Q \Div_\Q(X)/\CDiv_\Q(X).
$$
We have $\sigma(X)=0$ if and only if $X$ is $\Q$-factorial.

The goal of this section is to prove Theorem \ref{thm:Q-factoriality-defect}, which, as explained in the Introduction, extends previously known results in the literature 
to a general formula for $\sigma (X)$. This theorem is about the comparison between $\Div_\Q(X)$ and $\CDiv_\Q(X)$; we approach this by building on our understanding of 
$\mathrm{Alg}_\Q^1(X)$ in $\Div_\Q(X)$ in the previous section, and establishing a result that compares $\mathrm{Alg}_\Q^1(X)$ with $\CDiv_\Q(X)$.

\begin{prop}
\label{prop: algebraic equivalence and Cartier divisors}
Let $X$ be a normal projective variety of dimension $n$. Then
$$
\dim_\Q \frac{\mathrm{Alg}^1_\Q(X)}{\mathrm{Alg}^1_\Q(X)\cap \CDiv_\Q(X)}= \begin{cases}
0 &\text{if}\;\; h^1(X)=h^{2n-1}(X)\\
\infty &\text{otherwise}.
\end{cases}
$$
In particular, if $h^1(X)= h^{2n-1}(X)$, then every deformation of a $\Q$-Cartier divisor is $\Q$-Cartier.
\end{prop}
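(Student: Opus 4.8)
The plan is to transport the problem to a resolution $\mu\colon \widetilde X\to X$ and then to the Picard varieties. By Theorem~\ref{thm:Lefschetz-(1,1)}(ii) the proper pushforward $\mu_*\colon \mathrm{Alg}^1_\Q(\widetilde X)\to \mathrm{Alg}^1_\Q(X)$ is an isomorphism, so I would first identify, inside $\mathrm{Alg}^1_\Q(\widetilde X)$, the preimage of the subgroup $S:=\mathrm{Alg}^1_\Q(X)\cap \CDiv_\Q(X)$. The key point to establish is this: for $\widetilde D\in \mathrm{Alg}^1_\Q(\widetilde X)$, the pushforward $\mu_*\widetilde D$ is $\Q$-Cartier if and only if $\widetilde D=\mu^*C$ for some (necessarily algebraically trivial) $\Q$-Cartier divisor $C$ on $X$. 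The ``if'' direction is clear from the projection formula $\mu_*\mu^*=\mathrm{id}$. For ``only if'', setting $C:=\mu_*\widetilde D$, the difference $\widetilde D-\mu^*C$ lies in $\ker(\mu_*\colon \Div_\Q(\widetilde X)\to \Div_\Q(X))=\bigoplus_{i\in I}\Q\cdot E_i$, while at the same time it is algebraically, hence homologically, trivial; since the classes $\cl_\Q(E_i)$ are linearly independent by Lemma~\ref{lem: linear independence of exceptional divisors}, this forces $\widetilde D=\mu^*C$.

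Granting this, $\mu_*^{-1}(S)=\mu^*(S)$, and transporting through the isomorphism $\mu_*$ gives $\mathrm{Alg}^1_\Q(X)/S\simeq \mathrm{Alg}^1_\Q(\widetilde X)/\mu^*(S)$. I would then pass to classes modulo linear equivalence. Since $X$ and $\widetilde X$ have the same function field, $\mathrm{Prin}_\Q(\widetilde X)=\mu^*\mathrm{Prin}_\Q(X)\subset \mu^*(S)$, so quotienting by principal divisors and using that algebraically trivial Cartier classes are exactly the points of $\Pic^0$ identifies the target with
$$
\mathrm{coker}\big(\mu^*\colon \Pic^0(X)(\C)\otimes_\Z\Q\to \Pic^0(\widetilde X)(\C)\otimes_\Z\Q\big).
$$
The whole computation thus reduces to understanding the pullback map on Picard varieties.

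Next I would record the relevant structure. In characteristic zero $\Pic^0$ is smooth with tangent space $H^1(-,\O)$, so $\dim \Pic^0(\widetilde X)=h^1(\widetilde X,\O_{\widetilde X})$ and $\dim \Pic^0(X)=h^1(X,\O_X)$; moreover $\mu^*$ is injective (again by $\mu_*\mu^*=\mathrm{id}$), and since any homomorphism from an affine group to the abelian variety $\Pic^0(\widetilde X)$ is trivial, $\Pic^0(X)$ has no affine part, i.e. it is itself an abelian variety and $\mu^*$ is a closed immersion. To connect with the numerical hypothesis I would invoke Lemma~\ref{lem: H1 and H2 of normal variety}: it gives $h^{2n-1}(X)=Ih^1(X)=h^1(\widetilde X)=2h^1(\widetilde X,\O_{\widetilde X})$, while the injection $H^1(X,\Q)\hookrightarrow H^1(\widetilde X,\Q)$ (valid since $X$ is normal) exhibits $H^1(X,\Q)$ as a pure Hodge structure of weight $1$, whence $h^1(X)=2\dim_\C\gr^F_0H^1(X,\C)=2h^1(X,\O_X)$ by Hodge symmetry together with $\gr^F_0H^1(X,\C)\simeq H^1(X,\O_X)$. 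Therefore $h^1(X)=h^{2n-1}(X)$ if and only if $\dim\Pic^0(X)=\dim\Pic^0(\widetilde X)$.

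Finally I would read off the dichotomy. If the dimensions agree, the closed immersion $\mu^*$ of abelian varieties of equal dimension is an isomorphism, hence surjective on $\C$-points, and the cokernel above vanishes. If not, then $h^1(X)<h^{2n-1}(X)$ and the image of $\mu^*$ is a proper abelian subvariety $A''\subsetneq A:=\Pic^0(\widetilde X)$; the cokernel on $\C$-points is $(A/A'')(\C)$ for a positive-dimensional abelian variety $A/A''$, and tensoring a nonzero complex torus with $\Q$ yields an infinite-dimensional $\Q$-vector space, so the quotient is infinite. The ``in particular'' is then immediate: when $h^1(X)=h^{2n-1}(X)$ the vanishing of the quotient means $\mathrm{Alg}^1_\Q(X)\subset \CDiv_\Q(X)$, so if $D$ is $\Q$-Cartier and $D'$ is algebraically equivalent to it, then $D-D'\in\mathrm{Alg}^1_\Q(X)\subset \CDiv_\Q(X)$ and hence $D'$ is $\Q$-Cartier. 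The main obstacle I anticipate is the key lemma of the first paragraph --- pinning down exactly when the pushforward of an algebraically trivial divisor is $\Q$-Cartier --- since it is precisely what converts the opaque Weil-versus-Cartier comparison into a clean question about pullbacks; the remaining care is to see that the two extreme behaviors ($0$ versus $\infty$) come exactly from whether $\mu^*$ is an isogeny onto $\Pic^0(\widetilde X)$.
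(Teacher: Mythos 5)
Your overall strategy runs parallel to the paper's: both reduce the question to comparing $H^1(X,\O_X)$ with $H^1(\widetilde X,\O_{\widetilde X})$ (equivalently, $\Pic^0(X)$ with $\Pic^0(\widetilde X)$), and both extract the $0$-versus-$\infty$ dichotomy from whether that inclusion is an equality. But there is a genuine gap in the ``key point'' of your first paragraph, and it propagates through the reduction. In the ``only if'' direction you set $C:=\mu_*\widetilde D$ and assert that $\widetilde D-\mu^*C$ is ``algebraically, hence homologically, trivial.'' The problem is with $\mu^*C$: what you know about $C$ is that it is $\Q$-Cartier and algebraically trivial \emph{as a cycle}, i.e.\ its cycle class vanishes in $\mathit{IH}^2(X,\Q)$ (Borel--Moore homology). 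What you need is that $c_1(\O_X(NC))=0$ in $H^2(X,\Q)$, since $\cl_\Q^{\widetilde X}(\mu^*C)=\mu^*\tilde c_1^X(C)$. On a singular variety these live in different groups: a priori $\tilde c_1^X(C)$ only lies in $\ker(H^2(X,\Q)\to \mathit{IH}^2(X,\Q))$, so $\cl_\Q^{\widetilde X}(\mu^*C)$ is only known to lie in the span of the exceptional classes $\cl_\Q(E_i)$ --- exactly the subspace you then try to kill with Lemma~\ref{lem: linear independence of exceptional divisors}, so the argument is circular at that point. The same conflation of ``algebraically trivial as a cycle'' with ``lies in $\Pic^0$ up to torsion'' underlies your identification of $S/\mathrm{Prin}$ with $\Pic^0(X)\otimes\Q$, and even the inclusion $\mu^*(S)\subseteq \mathrm{Alg}^1_\Q(\widetilde X)$ is not clear without it. Bridging this is precisely the non-trivial Hodge-theoretic input in the paper: the factorization $\Q_X\to\O_X\to\DB_X^0$, the weight bound on $\K_X^\bullet$, and Lemma~\ref{lem: injectivity of (1,1) kernels restricted to weight 2}, which together show that $V_X=\ker(H^2(X,\Q)\to H^2(X,\O_X))$ injects into $\mathit{IH}^2(X,\Q)$, whence $\ker(\tilde c_1^X)=\mathrm{Alg}^1_\Q(X)\cap\CDiv_\Q(X)$.

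The damage is asymmetric. The ``$h^1=h^{2n-1}\Rightarrow 0$'' half can be salvaged along your lines by working with $\ker(\tilde c_1^X)\subseteq S$ instead of $S$ itself (this containment is the easy direction), and is then essentially the paper's Case~2: descend $\O_{\widetilde X}(N\widetilde D)\in\Pic^0(\widetilde X)$ to $L\in\Pic(X)$ using $H^1(X,\O_X)\simeq H^1(\widetilde X,\O_{\widetilde X})$, identify $L=\O_X(\mu_*N\widetilde D)$ by reflexivity, and only then apply the linear independence of exceptional classes to a combination with \emph{trivial} line bundle class. The ``$\infty$'' half, however, is not established: without the equality $\mu_*^{-1}(S)=\mu^*(S)$, the cokernel of $\Pic^0(X)\otimes\Q\to\Pic^0(\widetilde X)\otimes\Q$ only bounds the true quotient $\mathrm{Alg}^1_\Q(X)/S$ from above, and you would still have to rule out that the pushforwards $\mu_*(D_s)$ become $\Q$-Cartier on $X$ for some nonzero combination. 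The paper's Case~1 handles this by choosing the classes $\alpha_s\in H^1(\widetilde X,\O_{\widetilde X})$ to avoid the span of $H^1(X,\O_X)$ \emph{and} $H^1(\widetilde X,\Q)$, and by comparing against $\mu^*\ker(\tilde c_1^X)$, for which the pullback genuinely lands in $\ker(\cl_\Q^{\widetilde X})$.
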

\begin{proof}
Consider the exponential exact sequence:
$$
0\to \Z_X\to \O_X\to \O_X^*\to 1.
$$
This sequence is functorial, and thus, we have the commuting long exact cohomology sequences
\begin{equation}
\label{eqn: exponential LES commuting}
\xymatrix{
{H^1(X,\Z)}\ar[r]\ar[d]_{\mu^*}& {H^1(X,\O_X)}\ar[d]_{\mu^*}\ar[r]&{\Pic (X)}\ar[d]_{\mu^*}\ar[r]^-{c_1}&{H^2(X,\Z)}\ar[d]_{\mu^*}\ar[r]&{H^2(X,\O_X)}\ar[d]_{\mu^*} \\
{H^1(\widetilde X,\Z)}\ar[r]^-{}&{H^1(\widetilde X, \O_{\widetilde X})}\ar[r]&{\Pic (\widetilde X)}\ar[r]^-{c_1}&{H^2(\widetilde X,\Z)}\ar[r]&{H^2(\widetilde X,\O_{\widetilde X})}
}
\end{equation}
where $\mu\colon \widetilde X\to X$ is a resolution of singularities and $c_1$ is the first Chern class map. There exists a natural surjective map
$$
\CDiv(X)\to \Pic (X)
$$
which maps $D\in \CDiv(X)$ to $\O_X(D)\in \Pic(X)$, and this can be composed with the first Chern class map to obtain $\CDiv(X)\to H^2(X,\Z)$. Taking rational coefficients, we have a natural map
$$
\tilde c_1^X:\CDiv_\Q(X)\to H^2(X,\Q).
$$
(In this proof we add superscripts, to emphasize the variety we are considering.)
Accordingly, from \eqref{eqn: exponential LES commuting} we obtain the commutative diagram 
\begin{equation}
\label{eqn: exponential LES commuting 2}
\xymatrix{
{\CDiv_\Q(X)}\ar[d]_{\mu^*}\ar[r]^-{\tilde c_1^X}&{H^2(X,\Q)}\ar[d]_{\mu^*}\ar[r]&{H^2(X,\O_X)}\ar[d]_{\mu^*} \\
{\CDiv_\Q(\widetilde X)}\ar[r]^-{\tilde c_1^{\widetilde X}}&{H^2(\widetilde X,\Q)}\ar[r]&{H^2(\widetilde X,\O_{\widetilde X})}
}
\end{equation}
where the horizontal rows are exact. Notice that the map $\tilde c_1^{\widetilde X}$ for the smooth projective variety $\widetilde X$ agrees with the cycle class morphism $\cl_\Q^{\widetilde X}$ for $\widetilde X$ in Theorem \ref{thm:Lefschetz-(1,1)}(i).

Denote $V_X:=\ker(H^2(X,\Q)\to H^2(X,\O_X))$. Then we have the induced pullback map
$$
\mu^*:V_X\to V_{\widetilde X}.
$$
Since $H^2(\widetilde X,\O_{\widetilde X})=\gr^F_0 H^2(\widetilde X,\C)$, we have $V_{\widetilde X}=H^2(\widetilde X,\Q)\cap H^{1,1}(\widetilde X)$. By \eqref{eqn: exponential LES commuting 2} and the commutativity of the proper pushforward functor and the cycle class morphism, we have the commutative diagram:
\begin{equation}
\label{eqn: divisor class commuting diagram}
\xymatrix{
{\CDiv_\Q(X)}\ar[d]_{\tilde c_1^X}\ar[r]^-{\mu^*}&{\Div_\Q(\widetilde X)}\ar[d]_{\tilde c_1^{\widetilde X}= \cl_\Q^{\widetilde X}}\ar[r]^-{\mu_*}& {\Div_\Q(X)}\ar[d]_{\cl_\Q^X}\\
{V_X}\ar[r]^-{\mu^*}&{H^2(\widetilde X,\Q)\cap H^{1,1}(\widetilde X)}\ar[r]^-{\mu_*}&{\mathit{IH}^2(X,\Q)\cap \mathit{IH}^{1,1}(X)}
}
\end{equation}
where all the vertical columns are surjective; for the right-most, we use Theorem \ref{thm:Lefschetz-(1,1)}(i). The right square appears in the proof of Theorem 
\ref{thm:Lefschetz-(1,1)}(ii), where we showed that
$$
\ker(\cl_\Q^{\widetilde X})=\ker(\cl_\Q^{X})=\mathrm{Alg}^1_\Q(X).
$$

Next we focus on the outer square of \eqref{eqn: divisor class commuting diagram}:
\begin{equation}
\label{eqn: last divisor class commuting diagram}
\xymatrix{
{\CDiv_\Q(X)}\ar[d]_{\tilde c_1^X}\ar[r]^-{\mu_*\circ\mu^*}& {\Div_\Q(X)}\ar[d]_{\cl_\Q^X}\\
{V_X}\ar[r]^-{\mu_*\circ\mu^*}&{\mathit{IH}^2(X,\Q)\cap \mathit{IH}^{1,1}(X)}
}
\end{equation}
It is straightforward to check that the first row is the natural inclusion $\CDiv_\Q(X)\hookrightarrow \Div_\Q(X)$. Notice that
$$
V_X\subset \ker(H^2(X,\Q)\to \gr^F_0H^2(X,\C)),
$$
because of the factorization
$$
\Q_X\to\O_X\to \DB_X^0,
$$
and the fact that $\gr^F_0H^2(X,\C)=\HH^2(X,\DB_X^0)$. The bottom row of \eqref{eqn: last divisor class commuting diagram} is induced by the morphism of mixed Hodge structures $H^2(X,\Q)\to \mathit{IH}^2(X,\Q)$.  This morphism factors through $\gr^W_{2}H^2(X,\Q)$, as a surjection followed by an injection
$$
H^2(X,\Q)\twoheadrightarrow \gr^W_{2}H^2(X,\Q) \hookrightarrow \mathit{IH}^2(X,\Q),
$$
which is a consequence of the fact that $\K_X^\bullet$ is of weight $\le n-1$ as in \cite[Proposition 6.4]{PP25a}.  Thus Lemma \ref{lem: injectivity of (1,1) kernels restricted to weight 2} implies that the second row of \eqref{eqn: last divisor class commuting diagram} is injective.

As a result, we have
\begin{equation}
\label{eqn: Alg and CDiv intersection}
\ker(\tilde c_1^X)=\mathrm{Alg}^1_\Q(X)\cap \CDiv_\Q(X) \,\,\,\, {\rm and} \,\,\,\, \ker(\cl^X_\Q)=\mathrm{Alg}^1_\Q(X).
\end{equation}
Therefore, it suffices to prove the following:

\medskip

\noindent
\textit{Claim.} If $h^1(X)=h^{2n-1}(X)$, then $\ker(\tilde c_1^X)=\ker(\cl^X_\Q)$. If $h^1(X)\neq h^{2n-1}(X)$, then $$\textrm{coker}\left(\ker(\tilde c_1^X)\to \ker(\cl^X_\Q)\right)$$ is an infinite dimensional $\Q$-vector space.

To prove this, consider the natural morphism $H^1(X,\Q)\to \mathit{IH}^1(X,\Q)$. This is injective, since $H^1(X,\Q)$ is pure of weight $1$ for a normal projective variety $X$  
(see e.g. \cite[Example 8.11]{PP25a}) and $\K_X^\bullet$ is of weight $\le n-1$. Accordingly, we have natural inclusions
$$
\mu^*:H^1(X,\Q)\to H^1(\widetilde X,\Q),\quad \mu^*:H^1(X,\O_X)\to H^1(\widetilde X,\O_{\widetilde X})
$$
by Lemma \ref{lem: H1 and H2 of normal variety}.  In particular, since $H^{2n-1} (X, \Q) \simeq  \mathit{IH}^{2n-1}(X,\Q) \simeq \mathit{IH}^1(X,\Q)$,  
we have 
$$h^1(X)=h^{2n-1}(X) \iff H^1(X,\O_X) \simeq H^1(\widetilde X,\O_{\widetilde X}).$$

Recall that we have the identification $\ker(\cl^X_\Q)=\ker(\cl^{\widetilde X}_\Q)$. This allows us to consider instead the dimension of the cokernel of the inclusion
\begin{equation}
\label{eqn: pullback of kernels of class maps}
\mu^*:\ker(\tilde c_1^X)\to \ker(\cl^{\widetilde X}_\Q).    
\end{equation}

\noindent
\textit{Case 1.} $h^1(X)\neq h^{2n-1}(X)$.

In this case, there exists an infinite set $\left\{\alpha_s\right\}_{s\in S}$ of elements $\alpha_s\in H^1(\widetilde X,\O_{\widetilde X})$ such that any nonzero 
$\Q$-linear combination of $\alpha_s$ is not contained in the $\Q$-linear span of $H^1(X,\O_X)$ and $H^1(\widetilde X,\Q)$. Indeed, we can choose infinitely many 
such $\alpha_s$ since 
$$
\dim_\Q H^1(\widetilde X,\O_{\widetilde X})/H^1(X,\O_X)=\infty
$$
and $H^1(\widetilde X,\Q)$ is a finite dimensional $\Q$-vector space. Let $L_s\in \Pic^0(\widetilde X)$ be the line bundle corresponding to the element $\alpha_s$, and $D_s\in \Div(\widetilde X)$ such that $\O_{\widetilde X}(D_s)\isom L_s$. Then, $D_s\in \ker(\cl^{\widetilde X}_\Q)$.

Suppose that  $\sum_{s\in S} a_s D_s\in \mu^*\ker(\tilde c_1^X)$ for some $a_s\in \Q$. Then, for a sufficiently divisible $N\in \mathbb N$,
$$
\widetilde L:=\O_{\widetilde X}\left(N\sum_{s\in S} a_s D_s\right)\in \Pic^0(\widetilde X)
$$
is a line bundle isomorphic to $\mu^* L$, for some $L\in \Pic(X)$ with $c_1(L)=0\in H^2(X,\Z)$. Note that $\widetilde L$ lifts to the class $\sum_{s\in S} (Na_s)\alpha_s\in H^1(\widetilde X,\O_{\widetilde X})$. Say $L$ lifts to a class  $\beta\in H^1(X,\O_X)$. Then, the class
$$
\sum_{s\in S} (Na_s)\alpha_s-\mu^*\beta\in H^1(\widetilde X,\O_{\widetilde X})
$$
gives a trivial line bundle in $H^1(\widetilde X,\O^*_{\widetilde X})$ by the exponential map, hence is contained in $H^1(\widetilde X,\Z)$. This implies that $a_s=0$ for all $s\in S$, by the choice of $\left\{\alpha_s\right\}_{s\in S}$. Therefore, the set $\left\{D_s\right\}_{s\in S}$ is $\Q$-linearly independent in the cokernel of \eqref{eqn: pullback of kernels of class maps}, hence this cokernel is an infinite dimensional $\Q$-vector space.

\smallskip
\noindent
\textit{Case 2.} $h^1(X)= h^{2n-1}(X)$.

In other words, we assume $H^1(X,\O_X)\simeq H^1(\widetilde X,\O_{\widetilde X})$, and would like to show $\ker(\tilde c_1^X)=\ker(\cl^X_\Q)$. Suppose $\widetilde D\in \ker(\cl^{\widetilde X}_\Q)$. Then there exists a sufficiently divisible $N\in \mathbb N$ such that
$$
\O_{\widetilde X}(N\widetilde D)\in \Pic^0(\widetilde X).
$$
Say this lifts to a class $\alpha\in H^1(\widetilde X,\O_{\widetilde X})$. By assumption, there exists an element $\beta\in H^1(X,\O_X)$ such that $\mu^*\beta=\alpha$. Let $L\in H^1(X,\O_X^*)$ be the line bundle associated to $\beta$ by the exponential map. Then $\O_{\widetilde X}(N\widetilde D)=\mu^* L$. By the projection formula, we have
$$
\mu_*\O_{\widetilde X}(N\widetilde D)= L.
$$
On the other hand, we have a natural inclusion of sheaves
$$
\mu_*\O_{\widetilde X}(N\widetilde D)\subset \O_X(\mu_*N\widetilde D).
$$
which is an isomorphism away from a codimension $2$ subvariety of $X$. This implies that $L=\O_X(\mu_*N\widetilde D)$, since they are both reflexive sheaves on $X$. Therefore, $\mu_*N\widetilde D\in \ker(\tilde c_1^X)$ and
$$
N\widetilde D-\mu^*(\mu_*N\widetilde D)
$$
is a $\mathbb Z$-linear combination of exceptional divisors $\sum_{i\in I}a_iE_i$, with $\O_{\widetilde X}(\sum_{i\in I}a_iE_i)\isom\O_{\widetilde X}$. This implies $a_i=0$ for all $i\in I$ by Lemma \ref{lem: linear independence of exceptional divisors}. In conclusion, if $\widetilde D\in \ker(\cl^{\widetilde X}_\Q)$, then $\mu_*\widetilde D\in \ker(\tilde c_1^X)$ and $\widetilde D=\mu^*(\mu_*\widetilde D)$. Equivalently, $\ker(\tilde c_1^X)=\ker(\cl^X_\Q)$.
\end{proof}

In the course of the proof we made use of the following general fact about mixed Hodge structures:

\begin{lem}
\label{lem: injectivity of (1,1) kernels restricted to weight 2}
Let $V_\Q$ be a mixed $\Q$-Hodge structure of weight $\le 2$, with weight filtration $W_\bullet$. Denote $V_\C:=V_\Q\tensor_{\Q}\C$ as the corresponding $\C$-vector space with Hodge filtration $F_\bullet$. If $\gr^F_iV_\C=0$ for $i\notin \left\{0,-1,-2\right\}$, then the natural map
$$
\ker(V_\Q\to \gr^F_0V_\C)\to \ker\big(\gr^W_2V_\Q\to \gr^F_0(\gr^W_2 V_\C)\big)
$$
is injective.
\end{lem}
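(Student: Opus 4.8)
The plan is to unwind both kernels explicitly, reduce the asserted injectivity to a single clean vanishing statement, and then prove that statement by induction along the weight filtration using a standard property of pure Hodge structures.

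First I would rewrite the two kernels. Since $\gr^F_i V_\C=0$ for all $i\ge 1$, the hypothesis forces $F_0 V_\C=V_\C$, so $\gr^F_0 V_\C=V_\C/F_{-1}V_\C$ and the source is
$$
\ker(V_\Q\to \gr^F_0 V_\C)=V_\Q\cap F_{-1}V_\C.
$$
Likewise, as $V$ has weight $\le 2$ we have $W_2 V_\Q=V_\Q$, hence $\gr^W_2 V_\Q=V_\Q/W_1 V_\Q$ and $\gr^F_0(\gr^W_2 V_\C)=(\gr^W_2 V_\C)/F_{-1}(\gr^W_2 V_\C)$. The natural map sends a class $v\in V_\Q\cap F_{-1}V_\C$ to its image $[v]\in\gr^W_2 V_\Q$, which lands in the target kernel because $F_{-1}V_\C$ maps into $F_{-1}(\gr^W_2 V_\C)$. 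Now $[v]=0$ exactly when $v\in W_1 V_\Q$, so the asserted injectivity is equivalent to the vanishing
$$
W_1 V_\Q\cap F_{-1}V_\C=0.
$$

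To prove this I would pass to the sub-mixed Hodge structure $H:=W_1 V$, which has weight $\le 1$ and for which $F_{-1}V_\C\cap W_1 V_\C=F_{-1}H_\C$ by definition of the induced Hodge filtration on a sub-structure; it then suffices to show $F_{-1}H_\C\cap H_\Q=0$. The one genuine input is the following property of a pure Hodge structure $P$ of weight $m$: writing $P_\C=\bigoplus_{a+b=m}P^{a,b}$, one has $F_{-1}P_\C\cap\overline{F_{-1}P_\C}=\bigoplus_{a,b\ge 1}P^{a,b}$, which is $0$ as soon as $m\le 1$. Since a rational class $w$ satisfies $w=\overline w$, this shows that any rational class in $F_{-1}P_\C$ vanishes when $m\le 1$. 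I would then run a downward induction on the weight filtration of $H$: for $v\in F_{-1}H_\C\cap H_\Q$, its image in the top graded piece $\gr^W_m H$ (with $m\le 1$) is rational and, by strictness of morphisms of mixed Hodge structures with respect to $F$, lies in $F_{-1}(\gr^W_m H_\C)$, hence vanishes by the pure case; strictness with respect to $W$ then places $v$ in $W_{m-1}H_\Q\cap F_{-1}W_{m-1}H_\C$, and the induction closes since the weight filtration is finite.

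The first step is routine diagram-chasing, and the strictness properties are standard; the only real content is the pure-weight computation $F_{-1}\cap\overline{F_{-1}}=0$ in weight $\le 1$ combined with the reality of rational classes. The point deserving care is simply the bookkeeping in the increasing Hodge convention, where $F_{-1}$ plays the role of the classical $F^1$, and the observation that it is precisely the hypothesis $\gr^F_i V_\C=0$ for $i\ge 1$ that yields $F_0 V_\C=V_\C$ and hence the clean description of the source kernel used above.
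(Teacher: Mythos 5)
Your proof is correct and is essentially the paper's argument in element-level form: the paper applies the Snake Lemma to the map from $0\to W_1V_\Q\to V_\Q\to\gr^W_2V_\Q\to 0$ into the $\gr^F_0$-exact sequence of the weight filtration, and the whole content in both versions is the single vanishing $W_1V_\Q\cap F_{-1}V_\C=0$, proved from the pure-weight computation $F_{-1}P_\C\cap\overline{F_{-1}P_\C}=0$ in weight $\le 1$. The only difference is that you spell out, via the downward induction along $W_\bullet$, the lift from the graded pieces $i=0,1$ to $W_1V$ that the paper dismisses as ``easy to check.''
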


\begin{proof}
Consider the following commutative diagram, where the bottom short exact sequence is induced by the strictness of the Hodge and weight filtrations (see \cite[Corollary 3.6]{PS08}):
\begin{displaymath}
\xymatrix{
{0}\ar[r]&{W_1V_\Q}\ar[d]\ar[r]&{V_\Q}\ar[d]\ar[r]&{\gr^W_2V_\Q}\ar[d]\ar[r]&{0} \\
{0}\ar[r]&{\gr^F_0(W_1V_\C)}\ar[r]&{\gr^F_0V_\C}\ar[r]&{\gr^F_0(\gr^W_2V_\C)}\ar[r]&{0}
}
\end{displaymath}
The first vertical map is injective, since it is easy to check that $\gr^W_iV_\Q\to\gr^F_0(\gr^W_iV_\C)$ is injective for $i=0,1$. Therefore, we conclude from the Snake Lemma.
\end{proof}

Before proving Theorem \ref{thm:Q-factoriality-defect}, recall that  in Lemma \ref{lem: H1 and H2 of normal variety}  it is shown that $h^{2n-1}(X)= Ih^1(X)$ (resp. $h^{2n-2}(X)= Ih^2(X)$) if $X$ is a normal projective variety (resp. with rational singularities). It is easy to check that $h^1(X)\le Ih^1(X)$ (resp. $h^2(X)\le Ih^2(X)$), using the fact that $\K_X^\bullet$ is of weight $\le n-1$, again by \cite[Proposition 6.4]{PP25a}.

\begin{proof}[Proof of Theorem \ref{thm:Q-factoriality-defect}]
Applying \eqref{eqn: Alg and CDiv intersection} and the Snake Lemma to \eqref{eqn: last divisor class commuting diagram}, we deduce that
$$
\sigma(X)=\dim_\Q (\mathit{IH}^2(X,\Q)\cap \mathit{IH}^{1,1}(X)) -\dim_\Q V_X+\dim_\Q \frac{\mathrm{Alg}^1_\Q(X)}{\mathrm{Alg}^1_\Q(X)\cap \CDiv_\Q(X)}.
$$

Therefore, the main statements in Theorem \ref{thm:Q-factoriality-defect} are immediate from Proposition \ref{prop: algebraic equivalence and Cartier divisors}. Furthermore, if $X$ has rational singularities, then $\O_X\isom\DB_X^0\isom I\DB_X^0$. In particular,
$$
\gr^F_0H^2(X,\C)=\gr^F_0\mathit{IH}^2(X,\C)=H^2(X,\O_X).
$$
Using the fact that $\K_X^\bullet$ is of weight $\le n-1$, the kernel $K$ of the natural morphism
$$
H^2(X,\Q)\to \mathit{IH}^2(X,\Q)
$$
is a mixed $\Q$-Hodge structure of weight $\le 1$, with $\gr^F_pK_\C=0$ for all $p\ge0$. By \cite[Proposition 5.2]{PP25a}, 
$\gr^F_pK_\C=0$ for all $p\in \Z$, so $K=0$.

All of this implies that $H^2(X,\Q)\to \mathit{IH}^2(X,\Q)$ is an injection of (polarizable) pure $\Q$-Hodge structures of weight $2$, with cokernel only consisting of the $(1,1)$-component. From the fact that the category of polarizable Hodge structures is semisimple (see e.g. \cite[Corollary 2.12]{PS08}), we conclude that
$$
\sigma(X)= Ih^2(X)-h^2(X)=h^{2n-2}(X)-h^2(X),
$$
where the last equality follows from Lemma  \ref{lem: H1 and H2 of normal variety}.
\end{proof}

\begin{rmk}\label{rmk:formula-sigma-DB}
Note that by \cite[Theorem A]{PP25a}, which gives the symmetry of the boundary of the Hodge-Du Bois diamond for varieties with rational singularities, 
the last statement of Theorem \ref{thm:Q-factoriality-defect} may also be written as
$$\sigma (X) = \h^{n-1, n-1} (X) - \h^{1,1} (X).$$
\end{rmk}

\section{Local analytic versions of $\Q$-factoriality and the Lefschetz (1,1)-theorem}

\subsection{Local analytic Lefschetz (1,1)-theorem} We first establish a local analogue of the cycle class map for a germ of a normal analytic variety. Recall from the discussion following Theorem \ref{thm:Lefschetz-(1,1)} that the cycle class map
$$
\cl_d\tensor \Q: Z_d(X)\tensor \Q\to H^{BM}_{2d}(X,\Q),
$$
is well defined for an analytic variety $X$. Sheaf theoretically, for an irreducible subvariety $Z\subset X$ of dimension $d$, its class $\cl_d(Z)$ is the cohomology class induced by the composition of the following morphisms
\begin{equation}
\label{eqn:Borel Moore composition}
\Q_X\to \Q_Z\to \dual \Q_Z[-2d]\to \dual \Q_X[-2d]    
\end{equation}
where the first and third maps are adjunction of the inclusion $\iota_Z:Z\to X$ and its dual, and the second map is the composition
$$
\Q_Z\to \IC_Z[-d]\isom \dual\IC_Z[-d]\to \dual \Q_Z[-2d].
$$
See also the discussion at the beginning of Section \ref{scn:(1,1)}.

Following the notation in \cite[Section 1]{Kawamata88}, for a normal analytic variety $X$ of dimension $n$, we denote by $(X,x)$ the germ at a point $x\in X$ and
$$
\Div_\Q^{\rm an}(X,x):=\varinjlim_{x\in U}\Div_\Q^{\rm an}(U),\quad \CDiv_\Q^{\rm an}(X,x):=\varinjlim_{x\in U}\CDiv_\Q^{\rm an}(U)
$$
where the direct limit is taken over all the analytic open neighborhoods of $x$ under inclusion. Then, by definition, the \emph{defect of local analytic $\Q$-factoriality} of $X$ at $x$ is
$$
\sigma^{\rm an}(X;x)=\dim_\Q \Div_\Q^{\rm an}(X,x)/\CDiv_\Q^{\rm an}(X,x).
$$
Taking the direct limit of the cycle class maps
$$
\cl_{n-1}\tensor \Q:\Div_\Q^{\rm an}(U)\to H^{BM}_{2n-2}(U,\Q)
$$
over all $U$ as above, we obtain the local cycle class morphism
\begin{equation}
\label{eqn:naive local cycle class map}
\cl_\Q^{\rm an}(X,x):\Div_\Q^{\rm an}(X,x)\to \H^{-2n+2}(\dual \Q_X)_x
\end{equation}
where the target is the stalk at $x$ of the $\Q$-constructible sheaf $\H^{-2n+2}(\dual \Q_X)$. Alternatively, since cohomology commutes with taking stalks, the class $\cl_\Q^{\rm an}(D)$ of an irreducible divisor $D\in \Div_\Q^{\rm an}(X,x)$ is obtained by taking the stalk at $x$ of \eqref{eqn:Borel Moore composition}:
$$
\Q_x\to \iota_x^*\dual\Q_X[-2n+2],
$$
where $\iota_x:x\hookrightarrow X$ is the inclusion. This sheaf-theoretic interpretation allows us to further restrict the image of the local cycle class morphism using Saito's theory \cite{Saito90}, in the form of a local analytic Lefschetz (1,1)-theorem that we present next.

\begin{thm}\label{thm:local Lefschetz-(1,1)}
Let $(X, x)$ be an analytic germ of a normal algebraic variety of dimension $n$, and let $\mu \colon \widetilde X\to X$ be a resolution of singularities with $\mu$ projective. Then: 

\noindent
(i)  There exists a local cycle class morphism
$$
{\rm cl}^{\rm an}_\Q(X,x) \colon \Div^{\rm an}_\Q(X,x)\to \ker\left(\H^{-n+2}(\IC_X)_x\to (R^2\mu_*\O_{\widetilde X})_x\right)
$$
which is surjective. 

\noindent
(ii) If $R^1\mu_*\O_{\widetilde X}=0$, then the kernel of the morphism in (i) is $\CDiv^{\rm an}_\Q(X,x)$.
\end{thm}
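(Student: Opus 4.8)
The plan is to mirror, in the local analytic setting, the global strategy that produced Theorem~\ref{thm:Lefschetz-(1,1)}, replacing singular cohomology by the stalks of the relevant constructible complexes at $x$. First I would set up a local analogue of the commutative square relating Cartier divisors, Weil divisors, and their cycle classes upstairs on the resolution $\widetilde X$ and downstairs on $X$. Concretely, working over a small Stein neighborhood $U$ of $x$ and passing to the limit, I would use the stalk at $x$ of the local exponential sequence $0\to\Z_X\to\O_X\to\O_X^*\to 1$ and its pullback counterpart on $\widetilde X$, together with the identification $\H^{-n+2}(\IC_X)_x \hookrightarrow \H^{-2n+2}(\dual\Q_X)_x(-n)$ coming from the stalk-level version of the distinguished triangle \eqref{eqn:RHM-object}. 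The surjectivity of ${\rm cl}^{\rm an}_\Q(X,x)$ onto the kernel described in part~(i) is already available, so the task for part~(ii) is purely to identify the kernel of this cycle class morphism with $\CDiv^{\rm an}_\Q(X,x)$ under the hypothesis $R^1\mu_*\O_{\widetilde X}=0$.

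The heart of the argument is the local analogue of the dichotomy in Proposition~\ref{prop: algebraic equivalence and Cartier divisors}: a local Weil divisor class lies in $\CDiv^{\rm an}_\Q(X,x)$ precisely when its cycle class vanishes in the appropriate quotient. The hypothesis $R^1\mu_*\O_{\widetilde X}=0$ plays exactly the role that the equality $h^1(X)=h^{2n-1}(X)$ (equivalently $H^1(X,\O_X)\simeq H^1(\widetilde X,\O_{\widetilde X})$) played globally: it forces the relevant $(0,1)$-type obstruction on the resolution to descend to $X$. I would therefore argue as in Case~2 of that proof: given a Weil divisor $\widetilde D$ on $\widetilde X$ with vanishing cycle class, after multiplying by a divisible integer $N$ the line bundle $\O_{\widetilde X}(N\widetilde D)$ lies in the local analytic $\Pic^0$, hence lifts to a class in $(R^1\mu_*\O_{\widetilde X})_x$; the vanishing of this sheaf means the class descends, so $\O_{\widetilde X}(N\widetilde D)\simeq\mu^*L$ for a local line bundle $L$ on $X$. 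The projection formula $\mu_*\O_{\widetilde X}(N\widetilde D)=L$ combined with the reflexivity comparison $\mu_*\O_{\widetilde X}(N\widetilde D)\subset\O_X(\mu_*N\widetilde D)$ (an isomorphism in codimension $1$, hence everywhere since both are reflexive) identifies $\mu_*N\widetilde D$ as a $\Q$-Cartier divisor on $X$, which is what we want. Linear independence of exceptional classes from Lemma~\ref{lem: linear independence of exceptional divisors}, stated precisely for normal analytic germs, closes the loop by showing $N\widetilde D-\mu^*\mu_*N\widetilde D$ is a trivial combination of exceptional divisors.

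The main obstacle I anticipate is making the descent step fully rigorous in the local analytic category rather than merely transporting the projective statement. Two points demand care. First, one must know that a local analytic line bundle whose class in $(R^1\mu_*\O_{\widetilde X})_x$ vanishes is genuinely pulled back from $X$; this requires controlling $\Pic$ of the germ and the comparison of the local analytic exponential sequences on $U$ and $\mu^{-1}(U)$, where the terms $(R^i\mu_*\Z)_x$ and $(R^i\mu_*\O)_x$ must be shown to fit into compatible long exact sequences. Second, the identification of the target of ${\rm cl}^{\rm an}_\Q$ as the kernel in part~(i) uses the stalk-level behavior of $\K^\bullet_X$ and $\dual\K^\bullet_X$, and one must verify that the weight and support estimates from \cite[Proposition 6.4]{PP25a} localize correctly so that $R^1\mu_*\O_{\widetilde X}=0$ indeed pins down the map $\H^{-n+2}(\IC_X)_x\to(R^2\mu_*\O_{\widetilde X})_x$ whose kernel is the stated target. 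Once these localization issues are handled, the diagram-chase via the Snake Lemma proceeds exactly as in the global proof.
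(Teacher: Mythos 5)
Your overall architecture is right: work over small Stein neighborhoods $U$, use the exponential sequence upstairs on $\mu^{-1}(U)$, pass to stalks, and use $R^1\mu_*\O_{\widetilde X}=0$ to descend line bundles; for part (ii) your descent argument (multiply by $N$, trivialize $\O_{\widetilde X}(N\widetilde D)$ using the vanishing of $(R^1\mu_*\O_{\widetilde X})_x$, push forward, and kill the exceptional discrepancy with Lemma~\ref{lem: linear independence of exceptional divisors}) is a workable transplant of Case~2 of Proposition~\ref{prop: algebraic equivalence and Cartier divisors}; the paper takes a shortcut here by observing that $\mu^{-1}_*(D)+E$ is numerically $\mu$-trivial and citing \cite[Proposition 12.1.4]{KM92}, but both routes are fine. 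However, there is a genuine gap in part (i): you declare that the surjectivity of ${\rm cl}^{\rm an}_\Q(X,x)$ onto $\ker\left(\H^{-n+2}(\IC_X)_x\to (R^2\mu_*\O_{\widetilde X})_x\right)$ is ``already available,'' when it is in fact the main content of the theorem and is never established in your write-up. What is missing is threefold. First, one must show the local cycle class, a priori landing in $\H^{-2n+2}(\dual\Q_X)_x$, factors through $\H^{-n+2}(\IC_X)_x$ \emph{and} dies in $(R^2\mu_*\O_{\widetilde X})_x$; the paper does this by lifting \eqref{eqn:Borel Moore composition} to $D^b{\rm MHM}(X)$ (which already requires care, since $\Q^H_Z$ is not defined for general analytic $Z$ and is the reason the germ is assumed algebraic), using the weight bound on $\K^\bullet_X$ to get a factorization through $\IC_X^H(1)[-n+2]$, and then comparing with $\gr^F_1\DR$, where $\gr^F_1\DR(\Q^H_X)=0$ forces the composite to $R^2\mu_*\O_{\widetilde X}$ to vanish. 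Second, the surjectivity itself requires showing that the vertical pushforward from $\ker\bigl((R^{-2n+2}\mu_*\dual\Q_{\widetilde X})_x\to(R^2\mu_*\O_{\widetilde X})_x\bigr)$ onto the stated target is surjective; this uses the Decomposition Theorem together with the weight argument that the adjunction $\mu_*\dual\Q^H_{\widetilde X}\to\dual\Q^H_X$ factors through $\IC^H_X[n]$, plus the Grauert--Remmert fact that over a Stein $U$ every line bundle on $\mu^{-1}(U)$ comes from a divisor (so the top row of the diagram is exact). You flag the localization of the weight estimates as an ``obstacle'' but do not resolve it, and without these steps part (i) is unproved.

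A secondary omission affects part (ii) as well: to start your descent you need to replace a divisor $D$ on the germ with vanishing class by a divisor upstairs with vanishing class, i.e., you need the kernel of the vertical pushforward to be \emph{exactly} spanned by the exceptional classes. Linear independence (Lemma~\ref{lem: linear independence of exceptional divisors}) gives only one inclusion; the other requires the local dimension count
$$
\dim_\Q (R^2\mu_*\Q_{\widetilde X})_x-\dim_\Q\H^{-n+2}(\IC_X)_x=|I|,
$$
the local analogue of Lemma~\ref{lem: difference of h2 and IH2}, which the paper proves separately via proper base change and the computation of the top Borel--Moore homology of $\mu^{-1}(U\cap Z)$. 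Your proposal does not supply this, so even granting part (i) the identification of the kernel in part (ii) is incomplete.
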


\begin{rmk}
For technical reasons, we require the germ $x\in X$ to arise from an algebraic variety. For example, we need to work with the adjunction map
$$
\Q_X^H\to \mu_*\Q_{\widetilde X}^H 
$$
in the bounded derived category of mixed Hodge modules $D^b{\rm MHM}(X)$. However, the pushforward functor is not generally defined in the derived category for morphisms of analytic varieties, and the existence of such an adjunction is also not known. Additionally, we use the description of the Du Bois complexes of $X$ as the graded de Rham complexes of the trivial Hodge module $\Q^H_X$, as established in \cite{Saito00} in the context of algebraic varieties.
\end{rmk}

To set things up, note that there is an isomorphism
$$
\gr^F_0\DR(\IC_X^H)\isom R\mu_*\O_{\widetilde X}[n],
$$
and moreover $\gr^F_{i}\DR(\IC^H_X)=0$ for all $i>0$; see, for example, \cite[Lemma 4.2]{Park23}. Therefore there exists a natural morphism
$$
\IC_X\to \gr^F_{0}\DR(\IC^H_X)
$$
in the derived category of sheaves on $X$. Taking the $(-n+2)$-nd cohomology and its stalk at $x$, we obtain a morphism
$$
\H^{-n+2}(\IC_X)_x\to \H^{-n+2}(\gr^F_0\DR(\IC_X^H))_x,
$$
whose kernel coincides with the target of the cycle class morphism in (i).

We first prove the existence of the cycle class morphism in Theorem \ref{thm:local Lefschetz-(1,1)} (i). We start by showing that the composed map \eqref{eqn:Borel Moore composition} factors through the intersection complex.

\begin{lem}
\label{lem:Borel Moore composition factors thru IC}
In the setting of Theorem \ref{thm:local Lefschetz-(1,1)}, for an irreducible divisor $D\subset X$, the composition in \eqref{eqn:Borel Moore composition} coincides with the composition of the maps of underlying $\Q$-complexes of objects in $D^b{\rm MHM}(X)$:
$$
\Q_X^H\to \Q_D^H\to \dual \Q_D^H(-n+1)[-2n+2]\to \dual \Q_X^H(-n+1)[-2n+2].
$$
Furthermore, the resulting composition $\Q_X^H\to\dual \Q_X^H(-n+1)[-2n+2]$ (non-canonically) factors through the natural morphism
$$
\IC_X^H(1)[-n+2]\to \dual \Q_X^H(-n+1)[-2n+2]
$$
in $D^b{\rm MHM}(X)$.
\end{lem}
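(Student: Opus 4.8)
The plan is to prove the statement in two stages, corresponding to its two sentences: first that each arrow of the topological composition \eqref{eqn:Borel Moore composition} is the underlying $\Q$-complex of the indicated Hodge-module arrow, and then that the resulting Hodge-module composition factors through the asserted natural morphism.

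\emph{Part 1} is essentially formal, and I would dispatch it first. Let $\mathrm{rat}\colon D^b\mathrm{MHM}(X)\to D^b_c(X,\Q)$ denote the functor taking underlying $\Q$-complexes. Each arrow in \eqref{eqn:Borel Moore composition} is $\mathrm{rat}$ of the corresponding Hodge-module arrow: the two adjunction maps for $\iota_D\colon D\hookrightarrow X$ and its Verdier dual lift the topological adjunctions, while the canonical map $\Q_D^H[n-1]\to\IC_D^H$ together with Saito's polarization $\IC_D^H(n-1)\isom\dual\IC_D^H$ lift the topological maps $\Q_D[n-1]\to\IC_D$ and $\IC_D\isom\dual\IC_D$ that define the middle map of \eqref{eqn:Borel Moore composition}. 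Since $\mathrm{rat}$ is compatible with the six operations and with duality, and sends Tate twists to the identity on underlying complexes (all in \cite{Saito90}), functoriality of $\mathrm{rat}$ identifies the topological composition with $\mathrm{rat}$ of the Hodge-module composition. The only bookkeeping is to match the Tate twist $(-n+1)$ and the shifts, which one checks directly and which disappear after applying $\mathrm{rat}$.

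For the factorization I would first pin down the target map. The ``natural morphism'' $b\colon\IC_X^H(1)[-n+2]\to\dual\Q_X^H(-n+1)[-2n+2]$ is, up to the twist $(-n+1)$ and shift $[-n+2]$, the dual $\kappa^\vee\colon\IC_X^H(n)\to\dual\Q_X^H[-n]$ of the canonical map $\kappa\colon\Q_X^H[n]\to\IC_X^H$ from \eqref{eqn:RHM-object}, composed with the polarization $\IC_X^H(n)\isom\dual\IC_X^H$. Thus $\kappa^\vee$ sits in the dual of \eqref{eqn:RHM-object},
$$\IC_X^H(n)\xrightarrow{\kappa^\vee}\dual\Q_X^H[-n]\to\dual\K_X^\bullet\xrightarrow{+1},$$
which is exactly the triangle used in the proof of Lemma \ref{lem: lowest weight of H2 Borel Moore}. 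Twisting and shifting this triangle by $(-n+1)[-n+2]$ and applying $\mathrm{Hom}_{D^b\mathrm{MHM}(X)}(\Q_X^H,-)$ yields a long exact sequence in which $b_*$ is surjective — so that the composition $c\colon\Q_X^H\to\dual\Q_X^H(-n+1)[-2n+2]$ factors through $b$ — as soon as the obstruction group $\mathrm{Hom}_{D^b\mathrm{MHM}(X)}\big(\Q_X^H,\dual\K_X^\bullet(-n+1)[-n+2]\big)$ vanishes.

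It then remains to establish this vanishing, which is the technical heart. Writing $\Q_X^H=a_X^*\Q^H$ for $a_X\colon X\to\mathrm{pt}$ and using the adjunction $(a_X^*,a_{X*})$, the obstruction group becomes $\mathrm{Hom}_{D^b\mathrm{MHS}}\big(\Q^H,R\Gamma(X,\dual\K_X^\bullet)(-n+1)[-n+2]\big)$. Since mixed Hodge structures have cohomological dimension $1$, this sits in a short exact sequence with outer terms $\mathrm{Ext}^1_{\mathrm{MHS}}\big(\Q^H,\HH^{-n+1}(X,\dual\K_X^\bullet)(-n+1)\big)$ and $\mathrm{Hom}_{\mathrm{MHS}}\big(\Q^H,\HH^{-n+2}(X,\dual\K_X^\bullet)(-n+1)\big)$. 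The inputs are those of Lemma \ref{lem: lowest weight of H2 Borel Moore}: because $\K_X^\bullet$ is supported in dimension $\le n-2$ (normality) and of weight $\le n-1$ by \cite[Proposition 6.4]{PP25a}, one gets $\HH^{-n+1}(X,\dual\K_X^\bullet)=0$, killing the $\mathrm{Ext}^1$ term, while $\HH^{-n+2}(X,\dual\K_X^\bullet)$ has weights $\ge -2n+3$, so after the twist $(-n+1)$ it has weights $\ge 1$; a morphism of mixed Hodge structures from the pure weight-$0$ object $\Q^H$ into something of weight $\ge 1$ is zero, killing the remaining term. Hence the obstruction vanishes and $c=b\circ a$ for some $a$, unique only modulo the image of the connecting homomorphism, which accounts for the word ``non-canonically.'' The main obstacle is precisely this vanishing: a naive weight argument is \emph{not} available for $\mathrm{Hom}$ in the derived category, since shifts produce $\mathrm{Ext}^1$'s that can violate weight inequalities, so one must first descend to a point to land in $D^b\mathrm{MHS}$, exploit its cohomological dimension $1$, and only then invoke the weight and support estimates already recorded in the proof of Lemma \ref{lem: lowest weight of H2 Borel Moore}.
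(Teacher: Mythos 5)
Your first part follows the paper's route in outline, but it treats as ``essentially formal'' the one point the paper actually has to work for: in the setting of Theorem \ref{thm:local Lefschetz-(1,1)}, $X$ is an analytic germ, and $\Q^H_Z$ together with the adjunction maps for $\iota_Z$ and $j_Z$ is not defined for arbitrary analytic subvarieties. The paper constructs these functors using the fact that the ideal sheaf of $Z$ in the germ is globally generated (via \cite[(2.30.1)]{Saito90}) and then adapts the argument of \cite[Section 4.5]{Saito90} to produce $\Q^H_Z[d]\to\IC^H_Z$ from $\gr^W_d\H^d(\Q^H_Z)\isom\IC^H_Z$. This should be acknowledged rather than absorbed into ``\textrm{rat} is compatible with the six operations.''

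The real problem is in your second part. Your claim that ``a naive weight argument is not available for $\mathrm{Hom}$ in the derived category'' is false: the weight conventions on $D^b{\rm MHM}(X)$ are set up precisely so that shifts are accounted for, and the statement that $\mathrm{Hom}_{D^b{\rm MHM}(X)}(M,N)=0$ whenever $M$ has weights $\le w$ and $N$ has weights $\ge w+1$ is a standard part of the formalism (this is the paper's citation of \cite[Lemma 14.3]{PS08}). Here $\Q^H_X$ has weights $\le 0$ while $\dual\K^\bullet_X(-n+1)[-n+2]$ has weights $\ge 1$ (from $\K^\bullet_X$ of weight $\le n-1$ by \cite[Proposition 6.4]{PP25a}), and the vanishing is immediate --- no descent to a point is needed. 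Worse, your substitute argument is not available in this setting: it uses the adjunction $\mathrm{Hom}(a_X^*\Q^H,N)\isom\mathrm{Hom}(\Q^H,a_{X*}N)$, i.e.\ the derived pushforward of mixed Hodge modules along $a_X\colon X\to\mathrm{pt}$ for the analytic germ $X$, which is exactly the kind of functor the paper's remark after Theorem \ref{thm:local Lefschetz-(1,1)} flags as not generally defined for morphisms of analytic varieties (this is why the germ is required to be algebraic and why only projective $\mu$ is used). The support and weight estimates you import from Lemma \ref{lem: lowest weight of H2 Borel Moore} were also established there for a normal \emph{projective} variety, not for the germ. So while you identify the correct dual triangle and the correct obstruction group, the step that actually proves the factorization is replaced by an argument that both misdiagnoses the available tools and relies on a functor that does not exist in this context.
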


\begin{proof}
Note that the trivial Hodge module $\Q^H_Z$ is not generally defined for arbitrary varieties $Z$. For a subvariety $Z$ of a germ $(X,x)$ with inclusions $\iota_Z:Z\to X$ and $j_Z:X\sm Z\to X$, the ideal sheaf of $Z$ in $X$ is globally generated by holomorphic functions of $X$, and thus we can define functors $(j_Z)_!j_Z^{-1}$, $(j_Z)_*j_Z^{-1}$, $(\iota_Z)_*\iota_Z^*$, and $(\iota_Z)_*\iota_Z^!$ in $D^b{\rm MHM}(X)$ with the adjunction maps as described in \cite[(2.30.1)]{Saito90}. These functors are compatible with the corresponding functors for the underlying $\Q$-complexes. In particular, the trivial Hodge module $\Q^H_Z$ is $(\iota_Z)_*\iota_Z^*\Q^H_X$.

As a consequence, the first map of \eqref{eqn:Borel Moore composition} is the map of underlying $\Q$-complexes of associated objects in $D^b{\rm MHM}(X)$, and the third map is its dual. Hence, it suffices to interpret the second map as the map of underlying $\Q$-complexes of objects in $D^b{\rm MHM}(X)$.

For an irreducible subvariety $Z\subset X$ of dimension $d$, there exists a natural morphism
$$
\Q^H_Z[d]\to \IC_Z^H
$$
in $D^b{\rm MHM}(X)$, which is obtained from the isomorphism $\gr^W_d\H^d(\Q^H_Z)\isom \IC^H_Z$. Indeed, this follows from the mixed Hodge module formalism and the argument in \cite[Section 4.5]{Saito90}. Note that the original argument in \cite{Saito90} is for algebraic varieties, but the aforementioned descriptions of pullback and pushforward functors for the inclusions $\iota_Z$ and $j_Z$ are sufficient to run the same argument for analytic germs. Hence, the second map of \eqref{eqn:Borel Moore composition} can be realized as the composition of the maps of the underlying $\Q$-complexes of
$$
\Q_Z^H\to \IC_Z^H[-d]\isom \dual\IC_Z^H(-d)[-d]\to \dual \Q_Z^H(-d)[-2d],
$$
where the middle isomorphism is induced by the canonical polarization of $\IC_Z^H$. Therefore, this completes the proof of the first statement, when $Z=D$ is an irreducible divisor.

Next, we prove that a morphism $\Q_X^H\to\dual \Q_X^H(-n+1)[-2n+2]$ in $D^b{\rm MHM}(X)$ necessarily factors through $\IC_X^H(1)[-n+2]$. Recall from \cite[Proposition 6.4]{PP25a} that the RHM-defect object $\K^\bullet_X$ is of weight $\le n-1$. Consider the distinguished triangle
$$
\dual\IC^H_X(-n+1)[-n+2]\to \dual \Q_X^H(-n+1)[-2n+2]\to \dual\K^\bullet_X(-n+1)[-n+2] \xrightarrow{+1}
$$
which is dual to the triangle ($\ref{eqn:RHM-object}$) up to a shift and a Tate twist. The first term is isomorphic to $\IC_X^H(1)[-n+2]$ by the canonical polarization of $\IC^H_X$, and the third term is of weight $\ge 1$ as an object in $D^b{\rm MHM}(X)$. Recall that $\Q^H_X$ is of weight $\le 0$. From the mixed Hodge module formalism (see, for example, \cite[Lemma 14.3]{PS08}), we have
$$
\mathrm{Hom}_{D^b {\rm MHM}(X)}\left(\Q^H_X,\dual\K^\bullet_X(-n+1)[-n+2]\right)=0,
$$
and this implies the last statement of the Lemma.
\end{proof}

The Lemma  above shows that the image of the local cycle class morphism \eqref{eqn:naive local cycle class map} factors through the image of the natural morphism
\begin{equation}
\label{eqn: injection of stalks}
\H^{-n+2}(\IC_X)_x\to \H^{-2n+2}(\dual\Q_X)_x. 
\end{equation}
This map of stalks is an injection:

\begin{lem}
\label{lem:image of local cycle class map}
In the setting of Theorem \ref{thm:local Lefschetz-(1,1)}, the map \eqref{eqn: injection of stalks} is injective, and the image of the local cycle class morphism \eqref{eqn:naive local cycle class map} is contained in
$$
\ker\left(\H^{-n+2}(\IC_X)_x\to (R^2\mu_*\O_{\widetilde X})_x\right).
$$
\end{lem}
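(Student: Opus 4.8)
The plan is to treat the two assertions separately: first establish the injectivity of \eqref{eqn: injection of stalks}, which lets us view $\H^{-n+2}(\IC_X)_x$ as a subspace of $\H^{-2n+2}(\dual\Q_X)_x$; then, having placed the divisor classes inside this subspace via \lemmaref{lem:Borel Moore composition factors thru IC}, use a Hodge-filtration argument to push them into the claimed kernel.

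For injectivity, I would dualize the triangle ($\ref{eqn:RHM-object}$) and use the canonical polarization $\dual\IC_X^H\isom\IC_X^H(n)$ to obtain a distinguished triangle of underlying $\Q$-complexes
$$
\IC_X\to \dual\Q_X[-n]\to \dual\K_X^\bullet\xrightarrow{+1},
$$
whose first arrow is, up to a Tate twist, exactly the natural morphism singled out in \lemmaref{lem:Borel Moore composition factors thru IC} whose $\H^{-n+2}$-stalk is \eqref{eqn: injection of stalks}. Since taking stalks at $x$ is exact and commutes with cohomology, the resulting long exact sequence identifies the kernel of \eqref{eqn: injection of stalks} with the image of $\H^{-n+1}(\dual\K_X^\bullet)_x$, so it suffices to show $\H^{-n+1}(\dual\K_X^\bullet)_x=0$. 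Here I would invoke that $\dual\K_X^\bullet\in D^{\ge 0}{\rm MHM}(X)$ (as in the proof of \lemmaref{lem: lowest weight of H2 Borel Moore}, via \cite[Proposition 6.4]{PP25a}), and that by normality its perverse cohomologies are supported on $\Sing(X)$, of dimension $\le n-2$. Writing $\dual\K_X^\bullet$ as an iterated extension of its shifted perverse cohomologies ${}^p\H^i(\dual\K_X^\bullet)[-i]$ with $i\ge 0$, each a perverse sheaf supported in dimension $\le n-2$ and hence with ordinary cohomology sheaves concentrated in degrees $[-n+2,0]$, one concludes $\H^{j}(\dual\K_X^\bullet)=0$ for all $j<-n+2$; in particular $\H^{-n+1}(\dual\K_X^\bullet)_x=0$.

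For the second assertion, by \lemmaref{lem:Borel Moore composition factors thru IC} the class $\cl_\Q^{\rm an}(D)$ of a divisor is the image of the fundamental class under a morphism $\Q_X^H\to \IC_X^H(1)[-n+2]$ in $D^b{\rm MHM}(X)$. On stalk cohomology this is a morphism of mixed Hodge structures $\Q\to \big(\H^{-n+2}(\IC_X)_x\big)(1)$; since such morphisms preserve the Hodge filtration and the source lies in $F_0$, after untwisting the class $\cl_\Q^{\rm an}(D)$ lands in $F_{-1}\H^{-n+2}(\IC_X)_x$. On the other hand, the map to $(R^2\mu_*\O_{\widetilde X})_x$ is by construction the stalk at $x$ of $\IC_X\to \gr^F_0\DR(\IC_X^H)\isom R\mu_*\O_{\widetilde X}[n]$; because $\gr^F_i\DR(\IC_X^H)=0$ for $i>0$ and the filtered de Rham complex of the pure Hodge module $\IC_X^H$ is strict, this induced map is precisely the projection $\H^{-n+2}(\IC_X)_x\otimes\C\to \gr^F_0$, which kills $F_{-1}$. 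Hence $\cl_\Q^{\rm an}(D)$ maps to $0$, as required.

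I expect the main obstacle to be the vanishing $\H^{-n+1}(\dual\K_X^\bullet)_x=0$ in the first part, since it requires converting perverse-$t$-structure and support information about $\dual\K_X^\bullet$ into an amplitude bound on its ordinary cohomology sheaves. The decisive input is the standard fact that a perverse sheaf supported in dimension $\le m$ has cohomology sheaves concentrated in degrees $[-m,0]$ (cf. \cite[Propositions 8.1.42, 8.2.5]{HTT}); combined with $\dual\K_X^\bullet\in D^{\ge 0}{\rm MHM}(X)$ this pins the lowest possibly nonzero degree at $-n+2$, exactly one above the degree $-n+1$ where the boundary map could obstruct injectivity.
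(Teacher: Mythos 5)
Your proof of the injectivity of \eqref{eqn: injection of stalks} is correct and is the same as the paper's: dualize the triangle ($\ref{eqn:RHM-object}$), use that $\dual\K_X^\bullet\in{}^p\!D^{\ge 0}_c(X,\Q)$ with support of dimension $\le n-2$ to get $\H^{j}(\dual\K_X^\bullet)=0$ for $j<-n+2$, and read off injectivity from the long exact sequence. (The paper cites \cite[Proposition 8.1.42]{HTT} directly instead of unwinding the perverse filtration, but that is the same computation.)

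The second half has the right strategy but a genuine gap at the decisive step. You identify the map $\H^{-n+2}(\IC_X)_x\to (R^2\mu_*\O_{\widetilde X})_x$ with ``the projection $\H^{-n+2}(\IC_X)_x\otimes\C\to \gr^F_0$,'' invoking strictness of the filtered de Rham complex. This identification is false: the target $(R^2\mu_*\O_{\widetilde X})_x=\H^{-n+2}\bigl(\gr^F_0\DR(\IC_X^H)\bigr)_x$ is the stalk of a coherent sheaf, an $\O_{X,x}$-module that is in general not the $\gr^F_0$ of the finite-dimensional mixed Hodge structure $\H^{-n+2}(\iota_x^*\IC_X^H)$. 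Strictness holds for global hypercohomology of a pure Hodge module on a projective variety, but not for stalks (equivalently, for cohomology over small Stein neighborhoods). Relatedly, the Hodge filtration on $\iota_x^*\IC_X^H$ in Saito's formalism is not computed by taking stalks of $F_p\DR(\IC_X^H)$, so knowing that $\cl_\Q^{\rm an}(D)$ lies in $F_{-1}$ of the stalk mixed Hodge structure does not place it in the image of $\H^{-n+2}\bigl(F_{-1}\DR(\IC_X^H)\bigr)_x$, which is what would be needed to conclude it dies in $\gr^F_0\DR$.

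The repair is short and uses only ingredients you already have; it is what the paper does. Do not pass to stalk Hodge structures at all. The divisor class is induced by a morphism $\Q_X^H\to \IC_X^H(1)[-n+2]$ in $D^b{\rm MHM}(X)$ by Lemma \ref{lem:Borel Moore composition factors thru IC}. Since $\gr^F_{>1}\DR$ vanishes for both source and target, the maps from the underlying $\Q$-complexes to the top graded pieces $\gr^F_1\DR(\cdot)$ fit into a commutative square with the underlying map $\Q_X\to \IC_X[-n+2]\to \dual\Q_X(-n+1)[-2n+2]$; and $\gr^F_1\DR(\Q_X^H)=0$ while $\gr^F_1\DR\bigl(\IC_X^H(1)[-n+2]\bigr)\isom R\mu_*\O_{\widetilde X}[2]$ (via \cite[Corollary 0.3]{Saito00} and Grauert--Riemenschneider). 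Hence the composite $\Q_X\to \IC_X[-n+2]\to R\mu_*\O_{\widetilde X}[2]$ is zero as a map of complexes, before taking any stalks; taking stalk cohomology at $x$ then puts $\cl_\Q^{\rm an}(D)$ in the asserted kernel.
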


\begin{proof}
Consider the distinguished triangle
$$
\IC_X\to\dual \Q_X[-n]\to \dual \K_X^\bullet\xrightarrow{+1}.
$$
Here, we only consider the underlying $\Q$-structure of the RHM-defect object $\K_X^\bullet$. Recall that $\dual\K_X^\bullet\in{}^p\!D^{\ge 0}_c(X,\Q)$ and $\dual\K_X^\bullet$ is supported  on a closed subset of $X$ of  codimension at least  $2$. This implies $\H^{-n+1}(\dual\K_X^\bullet)=0$. Therefore, from the long exact sequence of constructible cohomologies, \eqref{eqn: injection of stalks} is injective.

Combined with Lemma \ref{lem:Borel Moore composition factors thru IC}, the image of the local cycle class morphism \eqref{eqn:naive local cycle class map} for divisors is contained in 
$$
\H^{-n+2}(\IC_X)_x\subset \H^{-2n+2}(\dual\Q_X)_x.
$$
Note that we have a natural isomorphism of graded de Rham complexes
$$
\gr^F_{-n}\DR(\Q_X^H[n])\isom\gr^F_{-n}\DR(\IC_X^H) \simeq \mu_*\omega_{\widetilde X} \simeq R \mu_*\omega_{\widetilde X} 
$$
by \cite[Corollary 0.3]{Saito00}, where the last isomorphism is due to the Grauert-Riemenschneider vanishing theorem. Taking its dual and a Tate twist, we have a natural isomorphism
$$
\gr^F_{1}\DR\left(\IC_X^H(1)[-n+2]\right)\isom \gr^F_{1}\DR\left(\dual \Q_X^H(-n+1)[-2n+2]\right)\isom R\mu_*\O_{\widetilde X}[2].
$$
Since $\gr^F_1\DR(\Q^H_X)=0$, we have the following commutative diagram induced by Lemma \ref{lem:Borel Moore composition factors thru IC}:
\begin{displaymath}
\xymatrix{
{\Q_X}\ar[d]\ar[r]&{\IC_X[-n+2]}\ar[d]\ar[r]&{\dual \Q_X(-n+1)[-2n+2]}\ar[d] \\
{0}\ar[r]&{R\mu_*\O_{\widetilde X}[2]}\ar[r]^{\sim}&{R\mu_*\O_{\widetilde X}[2]}
}
\end{displaymath}
where the top row is the map of underlying $\Q$-structures and the bottom row is the map of the associated graded terms 
$\gr^F_1\DR(\cdot)$ of de Rham complexes. Note that the vertical maps commute with the rows, because $\gr^F_{>1}\DR(\cdot)=0$. Consequently, taking the stalk at $x$, we conclude that the image of the local cycle class morphism is contained in
$$
\ker\left(\H^{-n+2}(\IC_X)_x\to (R^2\mu_*\O_{\widetilde X})_x\right).
$$
\end{proof}

This establishes the cycle class morphism in Theorem \ref{thm:local Lefschetz-(1,1)} (i). To prove its surjectivity, we need a local analytic analogue of Lemma \ref{lem: difference of h2 and IH2}.

\begin{lem} \label{lem:local difference of H2 and IH2}
In the setting of Theorem \ref{thm:local Lefschetz-(1,1)}, denote by $\left\{E_i\right\}_{i\in I}$ the set of $\mu$-exceptional divisors in $\widetilde X$. Then we have
$$
\dim_\Q (R^2\mu_*\Q_{\widetilde X})_x-\dim_\Q\H^{-n+2}(\IC_X)_x=|I|.
$$
\end{lem}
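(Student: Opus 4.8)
The plan is to transport the argument of Lemma \ref{lem: difference of h2 and IH2} from global hypercohomology to the stalk at $x$. Since $\mu$ is projective and $\widetilde X$ is smooth, the Decomposition Theorem applies and, exactly as in the proof of Lemma \ref{lem: difference of h2 and IH2}, produces a splitting
$$
R\mu_*\Q_{\widetilde X}[n]\simeq \IC_X\oplus \M^\bullet,
$$
where $\M^\bullet$ is supported on the locus $Z\subset X$ over which $\mu$ is not an isomorphism; as $X$ is normal, $\dim Z\le n-2$. Taking the $(-n+2)$-nd cohomology sheaf and its stalk at $x$, and using the identification $\H^{-n+2}\big(R\mu_*\Q_{\widetilde X}[n]\big)=R^2\mu_*\Q_{\widetilde X}$, the splitting gives
$$
(R^2\mu_*\Q_{\widetilde X})_x\simeq \H^{-n+2}(\IC_X)_x\oplus \H^{-n+2}(\M^\bullet)_x .
$$
Hence the left-hand side of the Lemma equals $\dim_\Q\H^{-n+2}(\M^\bullet)_x$, and the whole statement reduces to the equality $\dim_\Q\H^{-n+2}(\M^\bullet)_x=|I|$.

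For the lower bound I would argue as in Proposition \ref{prop: kernel of H2 to IH2}. Because $\IC_X$ is a direct summand, the induced map $(R^2\mu_*\Q_{\widetilde X})_x\to \H^{-n+2}(\IC_X)_x$ is surjective with kernel $\H^{-n+2}(\M^\bullet)_x$. The classes $\cl_\Q(E_i)$ of the exceptional divisors, viewed in $(R^2\mu_*\Q_{\widetilde X})_x$, lie in this kernel — a $\mu$-exceptional divisor maps to zero in the intersection complex, exactly as the exceptional classes map to zero in $\mathit{IH}^2$ in Proposition \ref{prop: kernel of H2 to IH2} — and by Lemma \ref{lem: linear independence of exceptional divisors}, stated precisely for the present analytic germ, these classes are linearly independent. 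This yields $\dim_\Q\H^{-n+2}(\M^\bullet)_x\ge |I|$.

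The matching upper bound is where the local statement genuinely departs from the global one, and I expect it to be the main difficulty. In Lemma \ref{lem: difference of h2 and IH2} the intersection-complex term is annihilated by a top-degree vanishing, $\HH^{n-2}(Z,\iota^*\IC_X)=0$ coming from $\iota^*\IC_X\in {}^p\!D^{\le -1}_c(Z,\Q)$; here, by contrast, $\H^{-n+2}(\IC_X)_x$ need not vanish and must be subtracted off exactly, so no such shortcut is available. To bound $\H^{-n+2}(\M^\bullet)_x$ I would decompose $\M^\bullet$ into shifted intersection complexes $\IC_{\overline S}(\mathcal L)[-j]$ of strata $S\subseteq Z$ via the Decomposition Theorem and use relative Hard Lefschetz to locate the perverse degrees that occur. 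A support-dimension count then shows that a stratum $S$ can reach cohomological degree $-n+2$ in the stalk only when $\dim S+\dim\big(\mu^{-1}(\text{generic point of }S)\big)\ge n-1$, i.e. only along divisorial fibres, and that the relevant summands are the relative-Hard-Lefschetz mirrors of the top-degree fibre-cohomology local systems. The crux is then to verify that the total contribution is exactly $|I|$: the subtlety is that nontrivial monodromy of these local systems distributes the contribution of several exceptional divisors with a common image between the bottom stalk of one intersection complex and the summands attached to deeper strata, so one must check that at each $x$ these contributions sum precisely to the number of exceptional divisors $E_i$ with $x\in\mu(E_i)$. I expect this to be controlled by the purity of $\M^\bullet$ — a direct summand of the pure object $R\mu_*\Q_{\widetilde X}[n]$ — together with the strictness of the weight filtration, which should force $\H^{-n+2}(\M^\bullet)_x$ to be spanned by the weight-two classes $\cl_\Q(E_i)$, giving the reverse inequality $\dim_\Q\H^{-n+2}(\M^\bullet)_x\le |I|$.
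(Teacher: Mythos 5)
Your reduction to $\dim_\Q\H^{-n+2}(\M^\bullet)_x=|I|$ via the Decomposition Theorem is exactly how the paper begins, and your lower bound $\dim_\Q\H^{-n+2}(\M^\bullet)_x\ge |I|$ (linear independence of the exceptional classes, via Lemma \ref{lem: linear independence of exceptional divisors}) is sound. But the upper bound is the actual content of the lemma, and there you have only a programme, not a proof: the stratification of $\M^\bullet$ into shifted intersection complexes, the relative Hard Lefschetz bookkeeping, and the claim that purity and strictness of weights ``should force'' $\H^{-n+2}(\M^\bullet)_x$ to be spanned by the $\cl_\Q(E_i)$ are all left unverified, and the monodromy issue you flag yourself is precisely where such an argument becomes delicate. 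As written, the crucial inequality $\le |I|$ is asserted, not established. This is a genuine gap.

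The paper closes it with a trick you did not find: apply $\iota^!$ rather than $\iota^*$ to the decomposition, where $\iota\colon Z\hookrightarrow X$. Since $\M^\bullet$ is supported on $Z$, one has $(\iota^!\M^\bullet)_x=(\M^\bullet)_x$, while the cosupport condition $\iota^!\IC_X\in {}^p\!D^{\ge 1}_c(Z,\Q)$ combined with $\dim Z\le n-2$ gives $\H^{-n+2}(\iota^!\IC_X)_x=0$ --- so the $\IC_X$ term is killed outright in the relevant degree, and no subtraction is needed. Base change then identifies $\H^{2}(\iota^! R\mu_*\Q_{\widetilde X})_x$ with $R^2\mu_*(\tilde\iota^!\Q_{\widetilde X})_x$ for $\tilde\iota\colon\mu^{-1}(Z)\hookrightarrow\widetilde X$, and smoothness of $\widetilde X$ gives $\tilde\iota^!\Q_{\widetilde X}=\dual\Q_{\mu^{-1}(Z)}[-2n]$, so the stalk is the direct limit of the top Borel--Moore homology groups $H^{BM}_{2n-2}(\mu^{-1}(U\cap Z),\Q)$. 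These have a canonical basis indexed by the $(n-1)$-dimensional irreducible components of $\mu^{-1}(U\cap Z)$, which for small $U$ are exactly the $E_i$. This computes $\dim_\Q\H^{-n+2}(\M^\bullet)_x=|I|$ as an equality in one stroke, with no Hard Lefschetz or weight argument. If you want to salvage your approach, you would need to actually carry out the stratum-by-stratum count; the $\iota^!$ route is both shorter and avoids the monodromy subtleties entirely.
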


Recall that $x\in X$ is an analytic germ, and a $\mu$-exceptional divisor $E$ satisfies $x\in\mu(E)$.

\begin{proof}
As in the proof of Lemma \ref{lem: difference of h2 and IH2}, we have the decomposition
$$
R\mu_*\Q_{\widetilde X}[n] \simeq \IC_X[0]\oplus \M^\bullet.
$$
where $\M^\bullet$ is supported inside the locus $Z\subset X$ where $\mu$ is not an isomorphism. It suffices to show that
$$
\dim_\Q \H^{-n+2}(\M^\bullet)_x=|I|.
$$

Denote by $\iota:Z\hookrightarrow X$ the closed embedding. Note that $\iota^!\IC_X\in {}^p\!D^{\ge 1}_c(Z,\Q)$ (see \cite[Proposition 8.2.5]{HTT}). Since $\dim Z\le n-2$, we have
$$
\H^{-n+2}(\iota^!\IC_X[0])_x=0.
$$
From $(\iota^!\M^\bullet)_x=(\M^\bullet)_x$, it suffices to show that
$$
\dim_\Q \H^{2}(\iota^!R\mu_*\Q_{\widetilde X})_x=|I|
$$
or equivalently,
$$
\dim_\Q R^2\mu_*(\tilde\iota^!\Q_{\widetilde X})_x=|I|
$$
where $\tilde\iota:\mu^{-1}(Z)\hookrightarrow \widetilde X$. Then we have $\tilde\iota^!\Q_{\widetilde X}=\dual \Q_{\mu^{-1}(Z)}[-2n]$ and
$$
R^2\mu_*(\tilde\iota^!\Q_{\widetilde X})_x=\varinjlim_{x\in U}H^{BM}_{2n-2}(\mu^{-1}(U\cap Z),\Q).
$$
by the proper base change theorem. The top Borel-Moore homology $H^{BM}_{2n-2}(\mu^{-1}(U\cap Z),\Q)$ is a $\Q$-vector space with a basis consisting of the classes of $(n-1)$-dimensional irreducible components of $\mu^{-1}(U\cap Z)$. However the set such irreducible components coincides with the set $\left\{E_i\right\}_{i\in I}$ of exceptional divisors, for small enough neighborhoods $U$ of $x$. This completes the proof.
\end{proof}

\begin{proof}[Proof of Theorem \ref{thm:local Lefschetz-(1,1)}]
For a neighborhood $U$ of $x$, we have the adjunction map
$$
\mu_*\dual \Q^H_{\widetilde U}\to \dual \Q_U^H 
$$
in $D^b{\rm MHM}(U)$, where $\widetilde U:=\mu^{-1}(U)$. This induces the following commutative diagram
\begin{displaymath}
\xymatrix{
{R\mu_*\dual\Q_{\widetilde U}}\ar[d]\ar[r]&{R\mu_*\O_{\widetilde U}[2n]}\ar[d]^{\simeq} \\
{\dual \Q_U}\ar[r]&{R\mu_*\O_{\widetilde U}[2n]}
}
\end{displaymath}
where the right column is obtained by taking the associated graded terms of the de Rham complexes, $\gr^F_n\DR(\cdot)$. Taking the hypercohomology groups  $\HH^{-2n+2}(\cdot)$, we have the following commutative diagram
\begin{displaymath}
\xymatrix{
{\Div^{\rm an}_\Q(\widetilde U)}\ar[d]_{\mu_*}\ar[r]^{{\rm cl}_\Q^{\rm an}}&{H^{BM}_{2n-2}(\widetilde U,\Q)}\ar[d]_{\mu_*}\ar[r]&{H^2(\widetilde U,\O_{\widetilde U})}\ar[d]^{\simeq} \\
{\Div^{\rm an}_\Q(U)}\ar[r]^{{\rm cl}_\Q^{\rm an}}&{H^{BM}_{2n-2}(U,\Q)}\ar[r]&{H^2(\widetilde U,\O_{\widetilde U})}
}
\end{displaymath}
Since $\mu$ is projective, for a small Stein neighborhood $U$ of $x$, any line bundle of $\widetilde U$ is induced by the divisor $\widetilde D\in \Div^{\rm an}_\Q(\widetilde U)$ (see \cite{GR58}). Therefore, the first row is exact, due to the exponential exact sequence. Denote
$$
\Div_\Q^{\rm an}(\widetilde X,\mu^{-1}(x)):=\varinjlim_{x\in U}\Div_\Q^{\rm an}(\widetilde U),
$$
and take the direct limit of the commutative diagram above over the open neighborhoods $x\in U$ under inclusion:
\begin{displaymath}
\xymatrix{
{\Div^{\rm an}_\Q(\widetilde X,\mu^{-1}(x))}\ar[d]_{\mu_*}\ar[r]^{{\rm cl}_\Q^{\rm an}}&{(R^{-2n+2}\mu_*\dual\Q_{\widetilde X})_x}\ar[d]_{\mu_*}\ar[r]&{(R^2\mu_*\O_{\widetilde X})_x}\ar[d]^{\sim} \\
{\Div^{\rm an}_\Q(X,x)}\ar[r]^{{\rm cl}_\Q^{\rm an}}&{\H^{-2n+2}(\dual\Q_X)_x}\ar[r]&{(R^2\mu_*\O_{\widetilde X})_x}
}
\end{displaymath}
Since the first row is exact, we obtain the commutative diagram
\begin{equation}\label{eqn:local class map resolution diagram}
\xymatrix{
{\Div^{\rm an}_\Q(\widetilde X,\mu^{-1}(x))}\ar[d]_{\mu_*}\ar@{->>}[r]^-{{\rm cl}_\Q^{\rm an}}&{\ker\left((R^{-2n+2}\mu_*\dual\Q_{\widetilde X})_x\to(R^2\mu_*\O_{\widetilde X})_x\right)}\ar[d]_{\mu_*} \\
{\Div^{\rm an}_\Q(X,x)}\ar[r]^-{{\rm cl}_\Q^{\rm an}}&{\ker\left(\H^{-n+2}(\IC_X)_x\to(R^2\mu_*\O_{\widetilde X})_x\right)}
}
\end{equation}
using Lemma \ref{lem:image of local cycle class map}. As in its proof, $\mu_*\dual\Q^H_{\widetilde X}$ is of weight $0$ and $\dual \K_X^\bullet[n]$ is of weight $\ge 1$, which implies that the adjunction map $\mu_*\dual \Q^H_{\widetilde X}\to \dual \Q_X^H$ factors non-canonically through $\IC_X^H[n]$. Together with the Decomposition Theorem, this implies that the second column is surjective. 
Therefore ${\rm cl}^{\rm an}_\Q(X,x)$ is surjective as well, as desired.

Furthermore, the kernel of the second column is a finite dimensional $\Q$-vector space with basis consisting of the classes of $\mu$-exceptional divisors, by Lemmas \ref{lem: linear independence of exceptional divisors} and \ref{lem:local difference of H2 and IH2}. Therefore, for $D\in \Div^{\rm an}_\Q(X,x)$ with ${\rm cl}_\Q^{\rm an}(X,x)(D)=0$, there exists a $\Q$-linear combination $E$ of exceptional divisors such that the divisor class of $\mu^{-1}_*(D)+E$ is zero, that is
$$
{\rm cl}_\Q^{\rm an}(\widetilde X,\mu^{-1}(x))(\mu^{-1}_*(D)+E)=0
$$
where ${\rm cl}_\Q^{\rm an}(\widetilde X,\mu^{-1}(x))$ denotes the class map of the first row in \eqref{eqn:local class map resolution diagram}, and $\mu^{-1}_*(D)$ is the strict transform of $D$.

Suppose $R^1\mu_*\O_{\widetilde X}=0$. Since $\mu^{-1}_*(D)+E$ is numerically relatively trivial over a small neighborhood of $x$, there exists a sufficiently divisible $N\in \mathbb N$ such that $ND$ is a Cartier divisor, by \cite[Proposition 12.1.4]{KM92}. Consequently, the kernel of ${\rm cl}_\Q^{\rm an}(X,x)$ is $\CDiv^{\rm an}_\Q(X,x)$.
\end{proof}

\subsection{The local analytic $\Q$-factoriality defect}\label{scn:loc-an-def}
We next come to the main point of this chapter, the description of the local analytic $\Q$-factoriality defect in Hodge-theoretic terms. 

\begin{proof}[Prood of Theorem \ref{thm:local analytic Q-factoriality defect}]

By Theorem \ref{thm:local Lefschetz-(1,1)}, the only statement that remains to be proven is $(i)\Rightarrow(ii)$. We argue by contradiction; suppose 
$(R^1\mu_*\O_{\widetilde X})_x\neq0$ for some resolution $\mu$. Then there exists an infinite set $\left\{\alpha_s\right\}_{s\in S}$ of elements $\alpha_s\in (R^1\mu_*\O_{\widetilde X})_x$ such that any nonzero $\Q$-linear combination of $\alpha_s$ is not contained in $(R^1\mu_*\Q_{\widetilde X})_x\subset (R^1\mu_*\O_{\widetilde X})_x$. Using the exponential exact sequence on $\widetilde X$, the proof of Proposition \ref{prop: algebraic equivalence and Cartier divisors} Case 1 applies in a similar fashion.

In brief, let $L_s$ be the line bundle associated to $\alpha_s$, and let $D_s$ be a divisor satisfying $L_s\isom \O_{\widetilde X}(D_s)$. Then for a nonzero $\Q$-linear combination $D$ of $D_s$, the pushforward $\mu_*(D)$ is never $\Q$-Cartier. Indeed, if it were, then $D-\mu^*\mu_*(D)$ would be numerically equivalent to zero relative to $X$. By Lemma \ref{lem: linear independence of exceptional divisors}, we have $D=\mu^*\mu_*(D)$ which contradicts the fact that any nonzero $\Q$-linear combination of $\alpha_s$ is not contained in $(R^1\mu_*\Q_{\widetilde X})_x$. Therefore, any $\Q$-linear combination of $\mu_*(D_s)$ is not contained in $\CDiv^{\rm an}_\Q(X,x)$, which implies that $\sigma^{\rm an}(X;x)$ is infinite.
\end{proof}

We derive some interesting consequences of this result.

In \cite[Lemma 1.1]{Kawamata88}, Kawamata showed that for a normal algebraic variety $X$ with $R^1\mu_*\O_{\widetilde X}=R^2\mu_*\O_{\widetilde X}=0$, the defect $\sigma(X)$ is finite. (This can also be obtained from \cite[Satz 6.1]{Flenner81}.) As an immediate corollary, we recover this result and deduce a stronger statement:

\begin{cor}
Let $X$ be a normal algebraic variety with $R^1\mu_*\O_{\widetilde X}=0$ for a resolution $\mu\colon \widetilde X\to X$. Then the defect of $\Q$-factoriality $\sigma(X)$ is finite.

Furthermore, if $\H^{-n+2}(\IC_X)=0$, then $X$ is locally analytically $\Q$-factorial.
\end{cor}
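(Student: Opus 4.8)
\noindent The second statement is immediate from Theorem \ref{thm:local analytic Q-factoriality defect}: if $\H^{-n+2}(\IC_X)=0$ as a sheaf, then its stalk, and a fortiori the kernel $\ker\big(\H^{-n+2}(\IC_X)_x\to (R^2\mu_*\O_{\widetilde X})_x\big)$, vanishes at every $x$. Since $R^1\mu_*\O_{\widetilde X}=0$, the formula in that theorem gives $\sigma^{\rm an}(X;x)=0$ for all $x$, which is exactly the assertion that $X$ is locally analytically $\Q$-factorial.

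\noindent For the finiteness of $\sigma(X)$, the plan is to bound the global quotient by the global sections of a sheaf whose stalks record the local analytic defects. On $X^{\rm an}$, let $\W_X$ be the sheaf of Weil divisors, let $\K_X^*/\O_X^*$ be the sheaf of Cartier divisors, and let $\mathcal{T}_X$ be the quotient, so that there is a short exact sequence
$$
0\longrightarrow \K_X^*/\O_X^*\longrightarrow \W_X\longrightarrow \mathcal{T}_X\longrightarrow 0 .
$$
By construction the stalk of $\mathcal{T}_X\otimes\Q$ at $x$ is $\Div^{\rm an}_\Q(X,x)/\CDiv^{\rm an}_\Q(X,x)$, of dimension $\sigma^{\rm an}(X;x)$. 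Taking global sections and tensoring with $\Q$ yields a map
$$
\Div_\Q(X)/\CDiv_\Q(X)\longrightarrow H^0\big(X^{\rm an},\mathcal{T}_X\otimes\Q\big),
$$
and I would check that it is injective: if an algebraic Weil divisor $D$ is analytically $\Q$-Cartier at every point, then since $X$ is Noetherian some fixed multiple $\O_X(ND)$ is analytically locally free everywhere, hence algebraically locally free by faithful flatness of $\O_{X,x}\to \O_{X,x}^{\rm an}$, so that $D$ is $\Q$-Cartier.

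\noindent It then remains to prove that $H^0(X^{\rm an},\mathcal{T}_X\otimes\Q)$ is finite-dimensional. The hypothesis $R^1\mu_*\O_{\widetilde X}=0$ together with Theorem \ref{thm:local analytic Q-factoriality defect} guarantees that every stalk is finite-dimensional, and $\mathcal{T}_X\otimes\Q$ is supported on $\Sing(X)$ because smooth points are factorial. I would then upgrade the stalkwise description of Theorem \ref{thm:local Lefschetz-(1,1)} to a sheaf isomorphism
$$
\mathcal{T}_X\otimes\Q \;\simeq\; \ker\big(\H^{-n+2}(\IC_X)\to R^2\mu_*\O_{\widetilde X}\big),
$$
the target being the kernel of the sheaf map induced by $\IC_X\to \gr^F_0\DR(\IC_X^H)\simeq R\mu_*\O_{\widetilde X}[n]$. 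This exhibits $\mathcal{T}_X\otimes\Q$ as a constructible sheaf of $\Q$-vector spaces on the algebraic set $\Sing(X)$; choosing a finite stratification on which it is a local system, a global section is determined by its finitely many restrictions, each lying in a finite-dimensional space of monodromy invariants, so that $H^0$ is finite-dimensional and $\sigma(X)<\infty$.

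\noindent The step I expect to be the main obstacle is the constructibility of $\mathcal{T}_X\otimes\Q$, that is, promoting the stalkwise isomorphism to an isomorphism of sheaves and showing that the kernel of a map from the cohomology sheaf $\H^{-n+2}(\IC_X)$ of the constructible complex $\IC_X$ to the coherent sheaf $R^2\mu_*\O_{\widetilde X}$ is genuinely constructible, rather than an arbitrary $\Q$-subsheaf of a constructible sheaf. The natural approach is to pass to a common stratification adapted both to $\IC_X$ and to the coherent sheaf $R^2\mu_*\O_{\widetilde X}$, on whose strata the rank of this map is locally constant, so that the kernel restricts to a sub-local system. A secondary point, already handled above through faithful flatness and Noetherianity, is the reconciliation of the analytic-local and algebraic-global pictures.
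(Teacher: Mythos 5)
Your argument is correct in substance and follows the route the paper intends (the paper states this as an immediate consequence of Theorems \ref{thm:local analytic Q-factoriality defect} and \ref{thm:local Lefschetz-(1,1)}, with no separate proof): the second claim is indeed a one-line application of the formula for $\sigma^{\rm an}(X;x)$, and for the first claim one globalizes the local cycle class map so that $\Div_\Q(X)/\CDiv_\Q(X)$ injects into the global sections of a sheaf sitting inside the constructible sheaf $\H^{-n+2}(\IC_X)$. Your injectivity step (Noetherian exhaustion to get a uniform multiple $N$, then faithful flatness of $\O_{X,x}\to\O_{X,x}^{\rm an}$ to descend local freeness of $\O_X(ND)$) is fine. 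The one place where you make the argument harder than it needs to be is the step you single out as the main obstacle: you do not need $\mathcal T_X\otimes\Q$ to be constructible, nor do you need the surjectivity of $\mathcal T_X\otimes\Q$ onto the kernel sheaf. Theorem \ref{thm:local Lefschetz-(1,1)}(ii), applied stalkwise under the hypothesis $R^1\mu_*\O_{\widetilde X}=0$, already gives an \emph{injection of sheaves} $\mathcal T_X\otimes\Q\hookrightarrow\H^{-n+2}(\IC_X)$; since $H^0$ is left exact, $H^0(X^{\rm an},\mathcal T_X\otimes\Q)$ embeds into $H^0\bigl(X^{\rm an},\H^{-n+2}(\IC_X)\bigr)$, which is finite dimensional because $\H^{-n+2}(\IC_X)$ is an algebraically constructible sheaf on an algebraic variety. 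This is worth noting also because your proposed repair of the ``obstacle'' is the weakest part of the write-up: the target $R^2\mu_*\O_{\widetilde X}$ is coherent rather than locally constant, so speaking of the rank of the map being locally constant on strata and of ``monodromy invariants'' of the kernel does not directly make sense and would require a genuine additional argument. Fortunately that entire discussion can be deleted.
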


The next consequence constrains the complement of the locus where the local analytic $\Q$-factoriality defect is finite, under an additional depth assumption.

\begin{cor}\label{cor:infinite-defect}
Let $X$ be a normal algebraic variety.  If $X$ is $S_3$, then the set of points $x$ where $\sigma^{\rm an} (X; x)$ is infinite is either empty or a Zariski closed subset of pure codimension $2$.
\end{cor}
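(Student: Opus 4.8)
The plan is to identify the locus in question with the support of a single coherent sheaf and then bound its codimension from both sides, the upper bound being where the hypothesis $S_3$ enters. Fix a resolution $\mu\colon \widetilde X\to X$. By the equivalence of (i) and (ii) in Theorem~\ref{thm:local analytic Q-factoriality defect}, for each point $x$ one has $\sigma^{\rm an}(X;x)=\infty$ if and only if $(R^1\mu_*\O_{\widetilde X})_x\neq 0$; hence the locus under consideration is exactly $\Sigma:=\Supp(R^1\mu_*\O_{\widetilde X})$, which is Zariski closed. For the lower bound I would note that a smooth point is a rational singularity, so $R^1\mu_*\O_{\widetilde X}$ vanishes over $X\sm \Sing(X)$; thus $\Sigma\subseteq \Sing(X)$, which has codimension $\ge 2$ by normality. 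It then remains only to rule out components of codimension $\ge 3$.

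So suppose $W$ is an irreducible component of $\Sigma$ with generic point $\eta$ and $c:=\codim_X W\ge 3$, aiming for a contradiction. I would localize at $\eta$: set $A:=\O_{X,\eta}$, a normal excellent local ring with $\dim A=c\ge 3$, and let $\pi\colon \widetilde Y\to Y:=\Spec A$ be the base change of $\mu$, which is again a resolution since $\widetilde Y$ is a localization of the regular scheme $\widetilde X$. Writing $\mathfrak m$ for the closed point and $E:=\pi^{-1}(\mathfrak m)$, flat base change gives $M:=R^1\pi_*\O_{\widetilde Y}=(R^1\mu_*\O_{\widetilde X})_\eta$; moreover $M$ is supported only at $\mathfrak m$, since $\eta$ is the generic point of a component of $\Sigma$ and hence no proper generization of $\eta$ lies in $\Sigma$. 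Because $X$ is $S_3$ and $\dim A=c\ge 3$, we have $\mathrm{depth}\,A\ge 3$, i.e. $H^i_{\mathfrak m}(A)=0$ for $i\le 2$. The claim is that these facts force $M=0$, contradicting $\eta\in\Sigma$ and finishing the argument.

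The vanishing of $M$ is the crux, and I would obtain it by combining Grauert--Riemenschneider with coherent duality. Applying Grothendieck duality to $\pi$ and using $\dual_{\widetilde Y}\O_{\widetilde Y}=\omega_{\widetilde Y}[c]$ together with $R^j\pi_*\omega_{\widetilde Y}=0$ for $j>0$ yields $\dual_Y(R\pi_*\O_{\widetilde Y})\simeq (\pi_*\omega_{\widetilde Y})[c]$, concentrated in degree $-c$. By local duality over $A$ and the identity $R\Gamma_{\mathfrak m}(R\pi_*\O_{\widetilde Y})=R\Gamma_E(\O_{\widetilde Y})$, this means $R\Gamma_E(\O_{\widetilde Y})$ is concentrated in degree $c$; in particular $H^1_E(\O_{\widetilde Y})=0$ since $c\ge 3>1$. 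On the other hand, the local-cohomology spectral sequence $H^p_{\mathfrak m}(R^q\pi_*\O_{\widetilde Y})\Rightarrow H^{p+q}_E(\O_{\widetilde Y})$ has $E_2^{1,0}=H^1_{\mathfrak m}(A)=0$ and $E_2^{2,0}=H^2_{\mathfrak m}(A)=0$ (using $\pi_*\O_{\widetilde Y}=A$ and $\mathrm{depth}\,A\ge 3$), so the surviving edge term identifies $H^1_E(\O_{\widetilde Y})\cong \Gamma_{\mathfrak m}(M)$. Since $M$ is supported at $\mathfrak m$ we have $\Gamma_{\mathfrak m}(M)=M$, whence $M=H^1_E(\O_{\widetilde Y})=0$, as desired.

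The main obstacle is precisely the crux vanishing $H^1_E(\O_{\widetilde Y})=0$: one must set up Grothendieck duality for the proper morphism $\pi$ over the non--finite-type base $\Spec A$ correctly, which is why I pass to the excellent local ring $A$ and invoke Grauert--Riemenschneider there. The remaining bookkeeping---flat base change giving $M=(R^1\mu_*\O_{\widetilde X})_\eta$, the support of $M$ at $\mathfrak m$, and the two elementary local-cohomology vanishings coming from $\mathrm{depth}\,A\ge 3$---is routine. Assembling the two bounds, every component of $\Sigma$ has codimension exactly $2$, so $\Sigma$ is either empty or of pure codimension $2$.
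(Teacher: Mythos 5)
Your argument is correct and follows essentially the same route as the paper: the paper packages your crux computation as a separate lemma (due to Koll\'ar) characterizing $R^1\mu_*\O_{\widetilde X}=0$ as equivalent to $S_3$ plus rational singularities in codimension $2$, proved by the same Grauert--Riemenschneider-plus-duality argument and the same local cohomology spectral sequence at a localized point. The only cosmetic differences are that you inline that lemma directly at the generic point of a hypothetical component of codimension $\ge 3$, and that you obtain the codimension $\ge 2$ lower bound from $\Sigma\subseteq \Sing(X)$ rather than from the general bound $\codim \Supp R^i\mu_*\O_{\widetilde X}\ge i+1$ used in the paper.
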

\begin{proof} 
By Theorem \ref{thm:local analytic Q-factoriality defect}, we are equivalently looking at the locus where $(R^1\mu_*\O_{\widetilde X})_{x}\neq 0$, for some resolution $\mu:\widetilde X\to X$. Since for each $i \ge 1$ the codimension of the support of $R^i \mu_* \O_{\widetilde X}$ is at least $i +1$, this locus is in any case of codimension at least $2$; moreover, if it were of codimension at least $3$, then $X$ would have rational singularities in codimension $2$. By the Lemma below, due to Koll\'ar, in combination with the $S_3$ condition this would imply that $R^1\mu_*\O_{\widetilde X} = 0$ everywhere.
\end{proof}

The following result and proof are due to J. Koll\'ar \cite{Kollar}. We thank him for allowing us to include them here. 

\begin{lem}\label{lem:Kollar}
Let $X$ be a normal variety over a field of characteristic zero. Then the following are equivalent:

\noindent
(i)~ $R^1\mu_*\O_{\widetilde X} = 0$ for some (equivalently, any) resolution $\mu:\widetilde X\to X$.

\noindent
(ii)~$X$ is $S_3$ and has rational singularities in codimension $2$.
\end{lem}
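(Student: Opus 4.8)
The plan is to route everything through Grothendieck duality for $\mu$ combined with Grauert--Riemenschneider vanishing. Since the sheaves $R^i\mu_*\O_{\widetilde X}$ are independent of the resolution, I fix one projective $\mu$. Duality gives $R\mu_* R\Hom(\O_{\widetilde X},\mu^!\omega_X^\bullet)\simeq \dual(R\mu_*\O_{\widetilde X})$, and since $\mu^!\omega_X^\bullet\simeq\omega_{\widetilde X}[n]$ while $R\mu_*\omega_{\widetilde X}\simeq\mu_*\omega_{\widetilde X}$ by Grauert--Riemenschneider, this collapses to
\[
R\mu_*\O_{\widetilde X}\simeq \dual(G)[-n],\qquad G:=\mu_*\omega_{\widetilde X}.
\]
Taking cohomology sheaves and applying local duality at a point $x$ of codimension $c$ (using $(\omega_X^\bullet)_x\simeq D_{\O_{X,x}}[n-c]$ for the normalized local dualizing complex) yields the master formula $(R^1\mu_*\O_{\widetilde X})_x\simeq H^{c-1}_{\mathfrak m_x}(G_x)^\vee$. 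I will also use two dictionaries: $X$ is $S_k$ iff $\dim\Supp \H^{-j}(\omega_X^\bullet)\le j-k$ for all $j<n$; and, writing $T:=\omega_X/G$ (a torsion sheaf, as $G\hookrightarrow\omega_X$ is an isomorphism in codimension one), Kempf's criterion together with the automatic Cohen--Macaulayness of normal surface germs shows that $X$ is rational in codimension $2$ iff $\Supp T$ has codimension $\ge 3$.

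For (ii)$\Rightarrow$(i) I would dualize the triangle $\O_X\to R\mu_*\O_{\widetilde X}\to\tau_{\ge1}R\mu_*\O_{\widetilde X}\xrightarrow{+1}$. Setting $C':=\mathrm{cone}(G[n]\to\omega_X^\bullet)$, its cohomology sheaves are exactly $\H^{-n}(C')=T$ and $\H^{-j}(C')=\H^{-j}(\omega_X^\bullet)$ for $2\le j\le n-1$, and one checks $R^1\mu_*\O_{\widetilde X}\simeq\H^2(\dual C')$. Feeding this into the hyperext spectral sequence $E_2^{p,q}=\mathcal{E}xt^p(\H^{-q}(C'),\omega_X^\bullet)\Rightarrow\H^{p+q}(\dual C')$, the only possible contributions in total degree $2$ lie on the antidiagonal $p=2-q$, $q\in\{2,\dots,n\}$. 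For $q=n$ the term $\mathcal{E}xt^{2-n}(T,\omega_X^\bullet)$ vanishes because $\Supp T$ has codimension $\ge3$, so $\dual T$ is concentrated in degrees $\ge 3-n$; for $2\le q\le n-1$ the term $\mathcal{E}xt^{2-q}(\H^{-q}(\omega_X^\bullet),\omega_X^\bullet)$ vanishes because $S_3$ forces $\dim\Supp\H^{-q}(\omega_X^\bullet)\le q-3$. Hence every $E_2$ term in total degree $2$ dies and $R^1\mu_*\O_{\widetilde X}=0$.

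For (i)$\Rightarrow$(ii) I argue by contrapositive through the master formula. If $X$ is not rational in codimension $2$, pick a codimension $2$ point $\eta$ with $T_\eta\ne0$; there $R:=\O_{X,\eta}$ has dimension $2$, so $H^1_{\mathfrak m}(\omega_R)=0$, and the local cohomology sequence of $0\to G_\eta\to\omega_R\to T_\eta\to0$ gives $H^1_{\mathfrak m}(G_\eta)\cong T_\eta\ne0$, i.e. $(R^1\mu_*\O_{\widetilde X})_\eta\ne0$. If instead $X$ is rational in codimension $2$ but fails $S_3$, the $S_2$-bound is attained: some $\H^{-j}(\omega_X^\bullet)$, $j<n$, has a component of dimension exactly $j-2$. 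Let $\eta$ be its generic point, so $c:=\codim\eta=n-j+2\ge3$, $\operatorname{depth}R=2$, and crucially $H^2_{\mathfrak m}(R)^\vee=(\H^{-j}(\omega_X^\bullet))_\eta$ is of \emph{finite length}. Because $\Supp T$ has codimension $\ge3$ one has $\dim T_\eta\le c-3$, whence $H^{c-2}_{\mathfrak m}(T_\eta)=H^{c-1}_{\mathfrak m}(T_\eta)=0$ and the same sequence gives $H^{c-1}_{\mathfrak m}(G_\eta)\cong H^{c-1}_{\mathfrak m}(\omega_R)$. Finally I would compute $H^{c-1}_{\mathfrak m}(\omega_R)^\vee\cong H^2_{\mathfrak m}(R)$ by dualizing the truncation triangle $\omega_R[c]\to D_R\to\tau_{\ge -c+1}D_R\xrightarrow{+1}$: the finite length of $H^2_{\mathfrak m}(R)$ turns the relevant spectral sequence term into $\operatorname{Hom}(H^2_{\mathfrak m}(R)^\vee,D_R)=H^2_{\mathfrak m}(R)$, which is the unique entry in its total degree with no room for differentials. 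Thus $(R^1\mu_*\O_{\widetilde X})_\eta\cong H^2_{\mathfrak m}(R)\ne0$, contradicting (i).

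The hard part is exactly this last step of the converse. A priori $H^{c-1}_{\mathfrak m}(\omega_R)$, and hence $R^1\mu_*\O_{\widetilde X}$, receives contributions from \emph{all} the intermediate local cohomology modules $H^q_{\mathfrak m}(R)$ of the badly non-Cohen--Macaulay ring $R$, and the spectral sequences above carry potentially nonzero differentials, so one cannot directly read off nonvanishing of $R^1\mu_*\O_{\widetilde X}$ from the failure of $S_3$. The device that removes the obstacle is to test at a generic point of a maximal-dimensional stratum on which $S_3$ fails: there the depth is exactly $2$ and $H^2_{\mathfrak m}(R)$ is finite length, which isolates the lowest local cohomology as the single surviving term and forces the duality computation to collapse.
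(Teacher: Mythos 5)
Your argument is correct, but it takes a genuinely different route from the paper's. The paper localizes at a point $x$ of codimension $d\ge 3$ (the case $d=2$ being the definition of rational surface singularities), applies the Leray spectral sequence for cohomology supported on $E=\mu^{-1}(x)$ to get the exact sequence $H^1_E(\widetilde X,\O_{\widetilde X})\to H^0_x(X,R^1\mu_*\O_{\widetilde X})\to H^2_x(X,\O_X)\to H^2_E(\widetilde X,\O_{\widetilde X})$, and kills the outer terms by formal duality against $R^{d-i}\mu_*\omega_{\widetilde X}$ plus Grauert--Riemenschneider; the resulting unconditional isomorphism $H^0_x(R^1\mu_*\O_{\widetilde X})\simeq H^2_x(X,\O_X)$ gives both implications at once. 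You instead dualize downstairs, identifying $R\mu_*\O_{\widetilde X}$ with $\dual(\mu_*\omega_{\widetilde X})[-n]$, and then disentangle the two conditions in (ii) by hand through the sequence $0\to\mu_*\omega_{\widetilde X}\to\omega_X\to T\to 0$ and the dimension bounds on $\H^{-j}(\omega_X^\bullet)$. What the paper's version buys is brevity and the fact that no careful choice of test point is needed; what yours buys is that the separate roles of ``$S_3$'' and ``rational in codimension $2$'' become visible term by term in the spectral sequence, and the device of testing at a generic point of a maximal-dimensional stratum of $S_3$-failure (so that $H^2_{\mathfrak m}(R)$ has finite length and depth is exactly $2$) is exactly the right move to make the converse work. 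One small imprecision: in the last step, $\operatorname{Hom}(H^2_{\mathfrak m}(R)^\vee,D_R)$ need not be the \emph{unique} nonzero entry in total degree $2$, since $\mathcal{E}xt^{2-q}(H^q_{\mathfrak m}(R)^\vee,D_R)$ for $q\ge 3$ can survive when $\dim H^q_{\mathfrak m}(R)^\vee$ attains the $S_2$-bound $q-2$; but this is harmless, because the $S_2$ bound does force all differentials into and out of the $(0,2)$-entry to vanish, so that entry survives to $E_\infty$ and the nonvanishing of $H^{c-1}_{\mathfrak m}(\omega_R)$, which is all you use, still follows.
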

\begin{proof}
Let $x$ be any point of $X$, a priori not necessarily closed. In order to check the statement at $x$, it is enough to localize, and hence assume that 
$x$ is a closed point on $X$ of dimension $d$. We consider a resolution $\mu \colon \widetilde X \to X$, and denote $E = \mu^{-1} (x)_{\rm red}$.
The statement is clear for $d = 2$, so we may assume $d \ge 3$.

The Leray spectral sequence computing $H^\bullet_E (\widetilde X, \O_{\widetilde X})$  yields an exact sequence 
$$H^1_E (\widetilde X, \O_{\widetilde X}) \to H^0_x (X, R^1 \mu_* \O_{\widetilde X}) \overset{\alpha}{\longrightarrow} H^2_x(X, \O_X)  \to H^2_E (\widetilde X, \O_{\widetilde X})$$
Moreover, by formal duality, $H^i_E (\widetilde X, \O_{\widetilde X})$ is dual to $R^{d - i} \mu_* \omega_{\widetilde X}$ up to completion, and the latter is $0$ for $i < d$ by the Grauert-Riemenschneider theorem. This means that the map $\alpha$ is an isomorphism, which establishes an equivalence 
between  $R^1\mu_*\O_{\widetilde X} = 0$ and the $S_3$ property.
\end{proof}

\section{$\Q$-factoriality and the Hodge-Du Bois numbers of threefolds}

\subsection{Formula for the (local analytic) $\Q$-factoriality defect}
One of our main applications of the general Theorem \ref{thm:Q-factoriality-defect} is the analysis of the relationship between $\Q$-factoriality and the middle rhombus in the Hodge-Du Bois diamond of a projective threefold. This is described in Theorem \ref{thm:analytic-Q-factoriality-threefolds}, and this section is devoted to the proof of that theorem. Besides Theorem \ref{thm:Q-factoriality-defect}, the proof relies heavily on 
results and techniques from the companion paper \cite{PP25a}.

\begin{proof}[{Proof of Theorem \ref{thm:analytic-Q-factoriality-threefolds}}]
By \cite[Theorem A]{PP25a}, we have 
$$
\underline h^{0,q}(X)=\underline h^{q,0}(X)=\underline h^{3,3-q}(X)=\underline h^{3-q,3}(X)
$$
for all $0\le q\le 3$;  in \emph{loc. cit.} it is actually shown that they are equal to the respective intersection Hodge numbers $I\h^{p,q}(X)$. By Theorem \ref{thm:Q-factoriality-defect}, this implies
$$
\sigma(X)=h^4 (X)- h^2 (X)=\underline h^{2,2}(X)-\underline h^{1,1}(X).
$$
In particular, $\underline h^{2,2}(X)\ge\underline h^{1,1}(X)$. Additionally, due to the rational singularities hypothesis, by \cite[Proposition 7.4]{PP25a} we have $\DB_X^2=I\DB_X^2$, hence
$$
\underline h^{2,q}(X)=I\underline h^{2,q}(X)
$$
for all $0\le q\le 3$.

Now since $X$ is a threefold with rational singularities,  we have $\lcdef (X) = 0$ by \cite{DT16}. Moreover, $X$ is a rational homology manifold away from a finite set, by \cite[Corollary 7.5]{PP25a}. This means that the RHM-defect object $\K^\bullet_X$ is a single mixed Hodge module 
$\mathcal V^H$, supported on this finite set, and the triangle ($\ref{eqn:RHM-object}$) becomes
\begin{equation}\label{eqn:special-triangle}
\mathcal V^H \to  \Q_X^H[n] \to  \IC_X^H\xrightarrow{+1}.    
\end{equation}
A mixed Hodge module supported on a point is a mixed Hodge structure, which we denote by $V^H_x$ for each point $x$ in the support of 
$\mathcal V^H$.
Denoting by $a_X:X\to {\rm  pt}$ the constant map to a point, we have 
$$
V^H[0]={a_X}_*\mathcal V^H,
$$
where $V^H$ is a mixed $\Q$-Hodge structure of weight $\le 2$ (by \cite[Proposition 6.4]{PP25a}), isomorphic to the direct sum of mixed Hodge structures $V^H_x$.\footnote{In fact it is not hard to check that $V^H$ is pure of weight $2$.} 

Passing to the long exact sequence of cohomology associated to the exact triangle ($\ref{eqn:special-triangle}$), we obtain an exact sequence of mixed $\Q$-Hodge structures
\begin{equation}
\label{eqn: LES of MHS of threefold}
0\to H^2(X,\Q)\to \mathit{IH}^2(X,\Q)\to V^H\to H^3(X,\Q)\to \mathit{IH}^3(X,\Q)\to 0, 
\end{equation}
while for $i \neq 2, 3$ we have
$$H^i(X,\Q)\simeq \mathit{IH}^i (X,\Q).$$

One immediate consequence is that there is a surjection
$$
H^{1,2}(X)\twoheadrightarrow \mathit{IH}^{1,2}(X),
$$
hence $\underline h^{1,2}(X)\ge I\underline h^{1,2}(X)=I\underline h^{2,1}(X)=\underline h^{2,1}(X)$. This proves the first part of the theorem.\footnote{The inequality 
$\h^{1,2}(X)\ge \h^{2,1}(X)$ follows in fact directly from \cite[Lemma 3.23]{FL24a}, which gives $\h^{0,3} (X)   + \h^{1,2} (X) \ge \h^{3,0} (X) + \h^{2,1} (X)$, combined with the equality $\h^{0,3} (X) = \h^{3,0} (X)$, which holds since $X$ has rational singularities.}

\smallskip

Next we focus on the final assertion, regarding local analytic $\Q$-factoriality.
For a closed point $\iota_x:x\hookrightarrow X$, we pull back the distinguished triangle ($\ref{eqn:special-triangle}$) to get
\begin{equation}
\label{eqn: pullback to point of Q to IC}
\iota_x^*\mathcal V^H\to \Q^H[3]\to \iota_x^*\IC_X^H\xrightarrow{+1}.
\end{equation}
Since $\mathcal V^H$ is supported at a finite set of closed points $\left\{x_s\right\}_{s\in S}$, the pullback 
$$V^H_x = \iota_x^*\mathcal V^H$$ 
is a $\Q$-Hodge structure supported in cohomological degree $0$. 
Furthermore, from the long exact sequence of cohomology of \eqref{eqn: pullback to point of Q to IC}, we have
$$
\H^{-1}(\iota_x^*\IC^H_X)\isom V^H_x.
$$
Making use of the last statement in Theorem \ref{thm:local analytic Q-factoriality defect}, altogether we deduce that
$$
\sigma^{\rm an}(X;x)\isom \dim_\Q V^H_x.
$$
Therefore, $\sigma^{\rm an}(X;x)=0$  away from the finite set $\left\{x_s\right\}_{s\in S}$, and
\begin{equation}\label{eqn:an-def}
\sum_{s\in S}\sigma^{\rm an}(X;x_s)=\sum_{s\in S}\dim_\Q V^H_{x_s}=\dim_\Q V^H.
\end{equation}
Now ($\ref{eqn: LES of MHS of threefold}$) and the identities of Hodge numbers listed earlier in the proof imply that we have
$$
\dim_\Q V^H=I\underline h^{1,1}(X)-\underline h^{1,1}(X)+\underline h^{1,2}(X)-I\underline h^{1,2}(X),
$$
which in turn implies that
\begin{equation}
\label{eqn: dim of V^H=K_X}
\dim_\Q V^H=\underline h^{2,2}(X)-\underline h^{1,1}(X)+\underline h^{1,2}(X)-\underline h^{2,1}(X).
\end{equation}
The second equality follows from $I\underline h^{1,2}(X)=\underline h^{2,1}(X)$ (explained earlier in the proof) and $I\underline h^{1,1}(X)= I\underline h^{2,2}(X)=\underline h^{2,2}(X)$,  which is due to the isomorphism $H^4 (X, \Q) \simeq {\it IH}^4 (X, \Q)$ of Hodge structures.

Since $\sigma(X)=\underline h^{2,2}(X)-\underline h^{1,1}(X)$, the identities \eqref{eqn:an-def} and \eqref{eqn: dim of V^H=K_X} complete the proof.
\end{proof}

We can now easily obtain the characterization of projective threefolds with rational singularities that are rational homology manifolds.

\begin{proof}[{Proof of Corollary \ref{cor:RHM-threefolds}}]
The equivalence between (i) and (ii) is in fact a special case of \cite[Theorem 10.1]{PP25a} since, as explained in the proof of Theorem 
\ref{thm:analytic-Q-factoriality-threefolds}, $X$ is a rational homology manifold away from a finite set of points.

To show that (ii) is equivalent to (iii), note first that  by  \cite[Theorem A]{PP25a} we have in any case the symmetry of the boundary of the 
Hodge-Du Bois diamond. If we have full symmetry, then by Theorem \ref{thm:analytic-Q-factoriality-threefolds} it is obvious that $\sigma^{\rm an}(X;x)= 0$ for all $x \in X$, that is $X$ is locally analytically $\Q$-factorial. 
Conversely, if all $\sigma^{\rm an}(X;x)= 0$, then the same theorem tells us that 
$$  - \underline h^{2,2}(X) + \underline h^{1,1}(X) = \underline h^{1,2}(X)-\underline h^{2,1}(X),$$
and moreover that the quantity on the left is nonpositive, and the quantity on the right is nonnegative. This implies that both sides are $0$, meaning that we have full symmetry.
\end{proof}

\begin{rmk}
Note in particular that the statement and proof are in line with the general fact that (local) analytic $\Q$-factoriality implies $\Q$-factoriality, and is typically a stronger condition.
\end{rmk}

\subsection{Hodge-Du Bois numbers via threefold flips and flops}
According to Theorem 7.1 and Proposition 7.4 in \cite{PP25a}, a projective threefold $X$ with rational singularities exhibits the following 
identities between Hodge-Du Bois numbers and intersection Hodge numbers:
$$
\underline h^{p,q}(X)=I\underline h^{p,q}(X) \,\,\,\,\,\,{\rm for} \,\,\,\, (p,q)\neq (1,1)\mathrm{\;or\;}(1,2).
$$
For threefold terminal flops and flips, Koll\'ar \cite[Corollary 4.12]{Kollar89} proved that intersection cohomology is invariant. He also showed 
in \cite[Theorem 2.4]{Kollar89} that the local analytic type of the singularities is unchanged under a flop.

Here we show that, using these results, as a quick consequence of Theorem \ref{thm:analytic-Q-factoriality-threefolds} one can complete the picture for singular cohomology: all the Hodge-Du Bois numbers are invariant under a flop, while only  $\h^{1,2} (X)$ may change (in fact go up) under a flip. We split Theorem \ref{thm:invariance-Hodge} into the separate cases of flips and flops.

\subsubsection{Invariance of Hodge-Du Bois numbers under terminal flops}

\begin{cor}\label{cor:flops}
\label{cor: invariance of Hodge-Du Bois numbers under terminal flops}
Let $X$ be a projective threefold with $\Q$-factorial terminal singularities and $g:X\to Z$ be a flopping contraction. For a $\Q$-divisor $D\in \Div_\Q(X)$, let $X^+$ be the $D$-flop of $g$. If $X^+$ is $\Q$-factorial, then we have 
$$
\underline h^{p,q}(X)=\underline h^{p,q}(X^+) \,\,\,\,\,\,{\rm for ~all} \,\,\,\, 0\le p,q\le3.
$$
\end{cor}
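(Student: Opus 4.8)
The plan is to separate the Hodge--Du Bois numbers of $X$ into those that agree with the corresponding intersection Hodge numbers, which are automatically invariant under the flop, and the two exceptional positions $(1,1)$ and $(1,2)$, which I would handle using the two displayed formulas of Theorem \ref{thm:analytic-Q-factoriality-threefolds} together with the $\Q$-factoriality of both $X$ and $X^+$. First I would record the reductions. Terminal singularities are rational, and a flop of a terminal threefold is again terminal, so both $X$ and $X^+$ are projective threefolds with rational singularities and Theorem \ref{thm:analytic-Q-factoriality-threefolds} applies to each. Moreover, both are $\Q$-factorial, so $\sigma(X)=\sigma(X^+)=0$. By Theorem 7.1 and Proposition 7.4 in \cite{PP25a} we have $\underline h^{p,q}(X)=I\underline h^{p,q}(X)$ and $\underline h^{p,q}(X^+)=I\underline h^{p,q}(X^+)$ for all $(p,q)\neq(1,1),(1,2)$.

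Next I would establish the flop-invariance of intersection cohomology at the level of Hodge structures. The flopping contraction $g\colon X\to Z$ and the flop $g^+\colon X^+\to Z$ are both small birational morphisms (they contract only curves, over a finite set $W\subset Z$), hence $Rg_*\IC^H_X\simeq \IC^H_Z\simeq Rg^+_*\IC^H_{X^+}$, so that $\mathit{IH}^\bullet(X)$ and $\mathit{IH}^\bullet(X^+)$ are canonically isomorphic as pure Hodge structures; this is the Hodge-theoretic refinement of \cite[Corollary 4.12]{Kollar89}. In particular $I\underline h^{p,q}(X)=I\underline h^{p,q}(X^+)$ for all $p,q$. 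Combining this with the previous paragraph gives at once
$\underline h^{p,q}(X)=I\underline h^{p,q}(X)=I\underline h^{p,q}(X^+)=\underline h^{p,q}(X^+)$
for every $(p,q)\neq(1,1),(1,2)$.

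It then remains to treat the two exceptional positions. For $(1,1)$, the formula $\sigma(X)=\underline h^{2,2}(X)-\underline h^{1,1}(X)$ of Theorem \ref{thm:analytic-Q-factoriality-threefolds} together with $\sigma(X)=0$ yields $\underline h^{1,1}(X)=\underline h^{2,2}(X)$, and likewise $\underline h^{1,1}(X^+)=\underline h^{2,2}(X^+)$; since position $(2,2)$ was already handled, these agree. For $(1,2)$, the final identity of Theorem \ref{thm:analytic-Q-factoriality-threefolds}, again with $\sigma=0$, reads $\underline h^{1,2}(X)-\underline h^{2,1}(X)=\sum_{s\in S}\sigma^{\rm an}(X;x_s)$, and similarly for $X^+$. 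As $\underline h^{2,1}$ is already known to be flop-invariant, the desired equality $\underline h^{1,2}(X)=\underline h^{1,2}(X^+)$ is reduced to the invariance of the total local analytic defect $\sum_{s}\sigma^{\rm an}(X;x_s)$.

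The main obstacle, and the only step using input genuinely special to flops, is this last invariance. Here I would exploit that $\sigma^{\rm an}(X;x)$ is by definition an invariant of the analytic germ $(X,x)$ alone, and that by \cite[Theorem 2.4]{Kollar89} a terminal threefold flop leaves the local analytic type of the singularities unchanged. Concretely, the flop is an isomorphism over $Z\smallsetminus W$, so the germs of $X$ and $X^+$ away from $g^{-1}(W)$ and $(g^+)^{-1}(W)$ coincide, while over $W$ the cited theorem provides a type-preserving matching of the singular germs of $X$ with those of $X^+$. Hence the finite multisets of analytic germs at which $\sigma^{\rm an}>0$ agree for $X$ and $X^+$, giving $\sum_{s}\sigma^{\rm an}(X;x_s)=\sum_{s'}\sigma^{\rm an}(X^+;x_{s'})$ and completing the argument. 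The delicate point to verify is precisely that \cite[Theorem 2.4]{Kollar89} matches up the germs with multiplicity, so that no local analytic defect is created or destroyed along the flopping and flopped curves; once this is in hand, everything else is bookkeeping with the two formulas of Theorem \ref{thm:analytic-Q-factoriality-threefolds}.
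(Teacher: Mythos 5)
Your proposal is correct and follows essentially the same route as the paper: flop-invariance of the intersection Hodge numbers plus the identification $\underline h^{p,q}=I\underline h^{p,q}$ away from $(1,1),(1,2)$, then $\sigma=0$ via Theorem~\ref{thm:analytic-Q-factoriality-threefolds} to handle $(1,1)$, and the matching of local analytic singularity types from \cite[Theorem 2.4]{Kollar89} to transfer $\sum_s\sigma^{\rm an}(X;x_s)$ and settle $(1,2)$. The only (welcome) additions are your justification of the Hodge-theoretic refinement of \cite[Corollary 4.12]{Kollar89} via smallness of $g$ and $g^+$, and your explicit flagging of the multiplicity-matching of germs, both of which the paper leaves implicit.
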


Note that the $\Q$-factoriality of $X^+$ is immediate if $D$ is effective and $g$ is the contraction of a $(K_X+\epsilon D)$-negative extremal ray (see \cite[Proposition 3.37]{KM98}).

\begin{proof}
By Koll\'ar's work mentioned above, we have
$$
I\underline h^{p,q}(X)=I\underline h^{p,q}(X^+)
$$
for all $0\le p,q\le3$. Since $X$ and $X^+$ are $\Q$-factorial, we have
$$
\underline h^{1,1}(X)=\underline h^{2,2}(X)=  I\underline h^{2,2}(X) =  I\underline h^{1,1}(X)
=I\underline h^{1,1}(X^+) =  I\underline h^{2,2}(X^+) = \underline h^{2,2}(X^+) =\underline h^{1,1}(X^+),
$$
where the first and last equalities follow from Theorem \ref{thm:analytic-Q-factoriality-threefolds}. Therefore,
$$
\underline h^{p,q}(X)=\underline h^{p,q}(X^+)
$$
for $(p,q)\neq (1,2)$.  Now $X$ and $X^+$ have the same analytic singularity types, again by Koll\'ar's work mentioned above, and thus the sums of the defects of local analytic $\Q$-factoriality are the same:
$$
\sum_{s\in S}\sigma^{\rm an}(X;x_s)=\sum_{s^+\in S^+}\sigma^{\rm an}(X^+;x_{s^+}).
$$
In conclusion, we have
$$
\underline h^{1,2}(X)-\underline h^{2,1}(X)=\underline h^{1,2}(X^+)-\underline h^{2,1}(X^+)
$$
by Theorem \ref{thm:analytic-Q-factoriality-threefolds}, and therefore $\underline h^{1,2}(X)=\underline h^{1,2}(X^+)$.
\end{proof}

In particular, the Hodge-Du Bois numbers of two birational minimal models are the same, since by \cite[Theorem 5.3]{Kawamata88} or \cite[Theorem 4.9]{Kollar89}, any
birational map $f\colon X\dashrightarrow X'$ can be factored as a composition of $D$-flops with $\Q$-factorial terminal singularities.

\begin{cor}
Let $X$ and $X'$ be birational projective threefolds with $\Q$-factorial terminal singularities. If both $K_X$ and $K_{X'}$ are nef, then 
$$
\underline h^{p,q}(X)=\underline h^{p,q}(X')  \,\,\,\,\,\,{\rm for ~all} \,\,\,\, 0\le p,q\le3.
$$
\end{cor}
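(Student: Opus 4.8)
The plan is to reduce the statement to the case of a single flop, which is already handled by \corollaryref{cor:flops}. Since $X$ and $X'$ are both projective threefolds with $\Q$-factorial terminal singularities and both $K_X$ and $K_{X'}$ are nef, they are birational minimal models; in particular the birational map $f\colon X\dashrightarrow X'$ is an isomorphism in codimension $1$, contracting and extracting no divisors, because the canonical classes are nef on both sides. The key input I would invoke is the factorization theorem of Kawamata \cite[Theorem 5.3]{Kawamata88} (see also Koll\'ar \cite[Theorem 4.9]{Kollar89}), which asserts that any such $f$ can be written as a finite composition
$$
X = X_0 \dashrightarrow X_1 \dashrightarrow \cdots \dashrightarrow X_m = X',
$$
where each $X_i$ is again a projective threefold with $\Q$-factorial terminal singularities, and each step $X_{i}\dashrightarrow X_{i+1}$ is the $D_i$-flop of a flopping contraction $g_i\colon X_i\to Z_i$ for a suitable $\Q$-divisor $D_i$.

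With this factorization in hand, I would apply \corollaryref{cor:flops} at each step. Because every $X_i$ is $\Q$-factorial and terminal, and each $X_{i+1}$ is the $\Q$-factorial $D_i$-flop of $g_i$, the hypotheses of that corollary hold at every stage, yielding
$$
\underline h^{p,q}(X_i)=\underline h^{p,q}(X_{i+1}) \quad \text{for all} \ 0\le p,q\le 3.
$$
Chaining these equalities along the factorization then gives $\underline h^{p,q}(X)=\underline h^{p,q}(X')$ for all $p,q$, which is the desired conclusion.

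The only genuine subtlety — and the point I expect to be the crux — is ensuring that the factorization can be chosen so that \emph{every} intermediate model $X_i$ is simultaneously $\Q$-factorial and terminal, so that \corollaryref{cor:flops} applies verbatim to each individual flop rather than merely to the endpoints. This is precisely the content of the cited theorems of Kawamata and Koll\'ar, so once they are invoked no further geometric input is needed; everything else is a formal concatenation of the one-step invariance already established. In short, the proof is essentially a bookkeeping argument on top of \corollaryref{cor:flops}, with the factorization theorem supplying the single nontrivial ingredient.
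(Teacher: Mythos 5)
Your proof is correct and is exactly the paper's argument: the paper also deduces this corollary by factoring the birational map between the two minimal models into a composition of $\Q$-factorial terminal $D$-flops via \cite[Theorem 5.3]{Kawamata88} and \cite[Theorem 4.9]{Kollar89}, and then applying the one-step flop invariance at each stage. No gaps to report.
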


\subsubsection{Behavior of Hodge-Du Bois numbers under terminal flips}

\begin{cor}\label{cor:flip}
\label{thm: behavior of Hodge numbers under terminal flips}
Let $X$ be a projective threefold with $\Q$-factorial terminal singularities, and $g_R: X\to Z$ be the flipping contraction of a $K_X$-negative extremal ray. Let $g_R^+:X^+\to Z$ be the flip. Then,
$$
\underline h^{p,q}(X)=\underline h^{p,q}(X^+)
$$
for all $(p,q)\neq (1,2)$, and $\underline h^{1,2}(X)\le\underline h^{1,2}(X^+)$.
\end{cor}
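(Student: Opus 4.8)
The plan is to peel off every bidegree except $(1,2)$ using invariance of intersection cohomology, then to reduce the remaining case to a single inequality of Betti numbers, $h^3(X)\le h^3(X^+)$, and finally to localize this inequality near the flipped curves, where the sign of the contraction must enter.

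First I would dispose of all $(p,q)\neq(1,2)$. Since terminal singularities are rational, \cite[Theorem 7.1, Proposition 7.4]{PP25a} give $\underline h^{p,q}(X)=I\underline h^{p,q}(X)$ for $(p,q)\neq(1,1),(1,2)$; the $\Q$-factoriality of $X$ together with Theorem \ref{thm:analytic-Q-factoriality-threefolds} (which forces $\underline h^{1,1}(X)=\underline h^{2,2}(X)$) and the symmetry $I\underline h^{1,1}=I\underline h^{2,2}$ upgrade this to $\underline h^{1,1}(X)=I\underline h^{1,1}(X)$, exactly as in the proof of Corollary \ref{cor:flops}. Thus $\underline h^{p,q}(X)=I\underline h^{p,q}(X)$ for every $(p,q)\neq(1,2)$, and likewise for $X^+$. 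Since Koll\'ar's invariance of intersection cohomology under terminal flips gives $I\underline h^{p,q}(X)=I\underline h^{p,q}(X^+)$ for all $p,q$, the equality $\underline h^{p,q}(X)=\underline h^{p,q}(X^+)$ follows for all $(p,q)\neq(1,2)$.

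Next I would reduce the $(1,2)$ statement to topology. With $\sigma(X)=0$, Theorem \ref{thm:analytic-Q-factoriality-threefolds} and the identity $\underline h^{2,1}(X)=I\underline h^{2,1}(X)=I\underline h^{1,2}(X)$ give $\underline h^{1,2}(X)=I\underline h^{1,2}(X)+\sum_{s}\sigma^{\rm an}(X;x_s)$, and similarly for $X^+$. In the $\Q$-factorial case $H^2(X)\simeq\mathit{IH}^2(X)$, so the exact sequence \eqref{eqn: LES of MHS of threefold} collapses to $0\to V^H\to H^3(X)\to\mathit{IH}^3(X)\to0$, whence $\sum_s\sigma^{\rm an}(X;x_s)=\dim_\Q V^H=h^3(X)-Ih^3(X)$. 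Invariance of intersection cohomology ($Ih^3(X)=Ih^3(X^+)$) then turns the desired $\underline h^{1,2}(X)\le\underline h^{1,2}(X^+)$ into the clean statement $h^3(X)\le h^3(X^+)$.

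The last step is the heart of the matter and the only place where the sign of the contraction should be used. I would work locally over each of the finitely many points $p\in Z$ above which $g_R$ is not an isomorphism, writing $F=g_R^{-1}(p)$ and $F^+=(g_R^+)^{-1}(p)$; away from these loci $X$ and $X^+$ are isomorphic, so the remaining defect contributions cancel. Since the $K_X$-negative contraction satisfies relative Kawamata--Viehweg vanishing, $Z$ has rational singularities, and Theorem \ref{thm:local analytic Q-factoriality defect} gives $\sigma^{\rm an}(Z;p)=\dim_\Q\H^{-1}(\IC_Z)_p$. Applying $R\Gamma$ of the RHM triangle over a contractible neighborhood of $p$, together with $Rg_{R*}\IC_X=\IC_Z$ (valid because $g_R$ is small) and the vanishing $H^3(F)=0$ (as $F$ is a curve), I expect to obtain $\sum_{x\in F}\sigma^{\rm an}(X;x)=\sigma^{\rm an}(Z;p)-\dim_\Q\mathrm{im}\big(H^2(F)\to\H^{-1}(\IC_Z)_p\big)$, and symmetrically for $F^+$. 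The inequality $h^3(X)\le h^3(X^+)$ is thereby reduced to the rank comparison $\mathrm{rank}\big(H^2(F)\to\H^{-1}(\IC_Z)_p\big)\ge\mathrm{rank}\big(H^2(F^+)\to\H^{-1}(\IC_Z)_p\big)$ inside the \emph{flip-independent} local intersection cohomology. I expect this comparison to be the main obstacle: it is precisely where $K_X\cdot F<0$ versus $K_{X^+}\cdot F^+>0$ must enter, presumably through discrepancy monotonicity under the flip (the relation $p^*K_X\le q^*K_{X^+}$ on a common resolution) and the resulting relative vanishing, or else through Koll\'ar's explicit description of the fibers of threefold terminal flips. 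Establishing that the image of $H^2$ of the $K$-negative fiber is at least as large as that of the $K$-positive fiber is the crux on which the entire statement rests.
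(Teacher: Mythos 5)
Your first two reductions are correct and agree with the paper: the equalities for $(p,q)\neq(1,2)$ follow exactly as in the flop case from $\Q$-factoriality of $X$ and $X^+$ (the latter via \cite[Proposition 3.37]{KM98}) together with Koll\'ar's invariance of intersection cohomology; and the remaining inequality $\underline h^{1,2}(X)\le\underline h^{1,2}(X^+)$ is indeed equivalent, via Theorem \ref{thm:analytic-Q-factoriality-threefolds} and the collapse of \eqref{eqn: LES of MHS of threefold} in the $\Q$-factorial case, to $h^3(X)\le h^3(X^+)$, i.e.\ to the non-decrease of the total local analytic $\Q$-factoriality defect under the flip. The paper performs the same reduction, phrased as $\chi(X)\ge\chi(X^+)$.

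The problem is your final step, which you yourself flag as unproven: the rank comparison $\mathrm{rank}\big(H^2(F)\to\H^{-1}(\IC_Z)_p\big)\ge\mathrm{rank}\big(H^2(F^+)\to\H^{-1}(\IC_Z)_p\big)$ is not established, the intermediate formula for $\sum_{x\in F}\sigma^{\rm an}(X;x)$ is only conjectured (``I expect to obtain''), and it is not clear that this comparison is any more tractable than the original inequality; as stated, the proof is incomplete precisely at the only point where the sign of the contraction enters. The paper closes this gap by a different and more elementary route. Writing $C=\mathrm{Ex}(g_R)$ and $C^+=\mathrm{Ex}(g_R^+)$, the identity $\chi(X)-\chi(X^+)=\chi(C)-\chi(C^+)$ follows from additivity of compactly supported Euler characteristics and $X\sm C\isom X^+\sm C^+$. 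Since $Z$ has rational singularities, a base-change argument shows $H^1(C^{\rm sn},\O_{C^{\rm sn}})=0$, so the seminormalizations of $C$ and $C^+$ are trees of $\P^1$'s and $\chi(C)=1+\#(\text{irreducible components of }C)$. The needed inequality is then exactly the statement that the number of irreducible components does not increase under a threefold terminal flip, which is \cite[Theorem 13.5]{KM92} (the paper also notes the defect inequality is essentially \cite[(11.9.1)]{KM92}). If you want to salvage your local intersection-cohomology approach, you would at minimum need to prove your stalk formula and then derive the rank comparison from the structure of the fibers of terminal flips; the Euler characteristic argument avoids both.
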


\begin{proof}
Note that $X^+$ is $\Q$-factorial (see \cite[Proposition 3.37]{KM98}). Therefore, as in the proof of Corollary \ref{cor:flops}, we have
$$
\underline h^{p,q}(X)=\underline h^{p,q}(X^+)
$$
for $(p,q)\neq (1,2)$.

It remains to prove $\underline h^{1,2}(X)\le\underline h^{1,2}(X^+)$. Given Theorem \ref{thm:analytic-Q-factoriality-threefolds}, this is equivalent to 
the fact that the sum of the defects of local analytic $\Q$-factoriality does not decrease after a flip. This is essentially contained in \cite{KM92}, see for instance (11.9.1).

We include another argument for completeness, still following methods from \emph{loc. cit.}.
Let $C:={\rm Ex}(g_R)$ be the flipping curve, and $C^+:={\rm Ex}(g_R^+)$ the flipped curve. Then $X\sm C\isom X^+\sm C^+$. Recall that
$$
\chi(X)=\chi_c(X\sm C)+\chi(C), \quad \chi(X^+)=\chi_c(X^+\sm C^+)+\chi(C^+)
$$
where $\chi(\cdot )$ (resp. $\chi_c(\cdot)$) is the topological (resp. compactly supported topological) Euler characteristic. This implies that
$$
\chi(X)\ge \chi(X^+) \Longleftrightarrow \chi(C)\ge \chi(C^+).
$$

Recall by \cite[Section 8]{Kawamata88} or \cite[Theorem 1.2]{Mori88} that $Z$ has rational singularities. Denote by $C^{\rm sn}$ the seminormalization of $C$. We have the following commutative diagram
\begin{displaymath}
\xymatrix{
{R^1{g_R}_*\C_X}\ar[d]_{}\ar[r]^-{}& {H^1(C^{\rm sn}, \C)}\ar[d]_{}\\
{R^1{g_R}_*\O_X}\ar[r]^-{}&{H^1(C^{\rm sn},\O_{C^{\rm sn}})}
}
\end{displaymath}
where the first horizontal map is surjective by the proper base change theorem and the second vertical map is surjective by Hodge theory, namely $H^1(C^{\rm sn},\O_{C^{\rm sn}})=\gr^F_0H^1(C^{\rm sn}, \C)$. This implies that the second horizontal map is surjective, and thus $H^1(C^{\rm sn},\O_{C^{\rm sn}})=0$. Hence, $C^{\rm sn}$ consists of a tree of $\P^1$s. Likewise, the seminormalization of $C^+$ is a tree of $\P^1$s.
This gives
$$
\chi(C)=1+\#(\mathrm{irreducible\;components\;of\;} C)
$$
and likewise for $C^+$. By \cite[Theorem 13.5]{KM92}, we conclude that $\chi(C)\ge \chi(C^+)$, or equivalently $\chi(X)\ge \chi(X^+)$. Since all the other $\h^{p,q}$ are preserved, we obtain $\underline h^{1,2}(X)\le\underline h^{1,2}(X^+)$.
\end{proof}

We propose the following extension of Corollaries  \ref{cor:flops} and \ref{cor:flip} to higher dimension (we only focus on the non-obvious Hodge numbers).

\begin{conj}\label{conj:higher-dim-flops}
Let $X$ be a projective variety with $\Q$-factorial terminal singularities of dimension $n\ge 4$. In the setting of Corollary  \ref{cor:flops} we have 
$$
\underline h^{n-2,n-1}(X)= \underline h^{n-2,n-1}(X^+),
$$
while in the setting of Corollary  \ref{cor:flip} we have
$$
\underline h^{n-2,n-1}(X)\le \underline h^{n-2,n-1}(X^+).
$$
\end{conj}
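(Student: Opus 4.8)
The plan is to localize the computation of $\underline h^{n-2,n-1}(X) = \dim_\C \HH^{n-1}(X, \DB_X^{n-2})$, which lives in cohomological degree $2n-3$, along the exceptional loci of the contraction. In both settings the birational map is an isomorphism away from the flopping (resp.\ flipping) locus $C = \mathrm{Ex}(g)$ and the flopped (resp.\ flipped) locus $C^+$, so that $U := X \sm C \isom X^+ \sm C^+$. Excision then places the entire difference between $H^{2n-3}(X)$ and $H^{2n-3}(X^+)$ into the local cohomology along these loci, through the long exact sequences
\[
\cdots \to H^{2n-3}_C(X) \to H^{2n-3}(X) \to H^{2n-3}(U) \to H^{2n-2}_C(X) \to \cdots
\]
and its counterpart for $X^+$. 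All of these groups carry mixed Hodge structures coming from Saito's theory, and the point is that refining by $\gr^F_{-(n-2)}$ isolates exactly $\underline h^{n-2,n-1}$; the local terms $\gr^F_{-(n-2)} H^\bullet_C(X)$ are local analytic invariants of the singularities of $X$ transverse to $C$. A preliminary step, as in the threefold case where $\underline h^{2,1} = I\underline h^{2,1}$, is to check that $\underline h^{n-1,n-2}(X)$ is already an intersection Hodge number, so that $\underline h^{n-2,n-1}$ is the only member of the relevant conjugate pair carrying a defect: since $X$ has terminal singularities one has $\DB_X^p \isom I\DB_X^p$ for $p \ge n-1$, and $p = n-2$ is the largest index at which the two can differ.

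For the flop case, $g$ is crepant, so $X$ and $X^+$ are $K$-equivalent, and a common resolution $W$ dominates both with equal discrepancies. I would use this to match the local Hodge data. Koll\'ar's invariance of the analytic singularity type \cite[Theorem 2.4]{Kollar89} is what makes the threefold argument work, and in arbitrary dimension I would seek the analogous statement that the mixed Hodge structures on $H^\bullet_C(X)$ and $H^\bullet_{C^+}(X^+)$ have identical $\gr^F_{-(n-2)}$ pieces. Granting this, the two excision sequences identify $\gr^F_{-(n-2)} H^{2n-3}(X)$ with $\gr^F_{-(n-2)} H^{2n-3}(X^+)$, and $\underline h^{n-2,n-1}(X) = \underline h^{n-2,n-1}(X^+)$ follows. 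Alternatively one may phrase the defect through the RHM-defect triangle ($\ref{eqn:RHM-object}$): the discrepancy between $\DB_X^{n-2}$ and $I\DB_X^{n-2}$ is measured by $\gr^F_{-(n-2)}\DR(\K_X^\bullet)$, and $K$-equivalence together with invariance of the intersection Hodge numbers (the higher-dimensional analogue of \cite[Corollary 4.12]{Kollar89}) reduces the claim to the invariance of this local term.

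For the flip case the same localization applies, but now $K_X$ is negative on the contracted ray and the two ends are not $K$-equivalent, so only an inequality is expected. The additional ingredient is exactly the one used in the proof of Corollary \ref{cor:flip}: the negativity of $K_X$ produces a one-directional comparison between the contributions supported on $\mathrm{Ex}(g_R)$ and $\mathrm{Ex}(g_R^+)$. I would convert this into a weight-monotonicity statement for the local cohomology, controlled by the flip inequality \cite[Theorem 13.5]{KM92}, forcing the defect carried by $\gr^F_{-(n-2)}$ not to decrease and hence $\underline h^{n-2,n-1}(X) \le \underline h^{n-2,n-1}(X^+)$.

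The hard part is genuinely the higher-dimensional local geometry. In dimension three the locus $C$ is a curve and $\K_X^\bullet$ reduces to a sum of Hodge structures at isolated points, so the defect is literally the local analytic $\Q$-factoriality defect controlled by Theorem \ref{thm:local analytic Q-factoriality defect}; in dimension $n \ge 4$ the exceptional loci are positive-dimensional, $\K_X^\bullet$ is a genuine complex of mixed Hodge modules supported in codimension $\ge 3$, and the piece $\gr^F_{-(n-2)}\DR(\K_X^\bullet)$ no longer reduces to a $\Q$-factoriality defect. Controlling the mixed Hodge structure on the local cohomology along $C$, and in particular matching its $\gr^F_{-(n-2)}$ piece under a flop, is therefore substantially more delicate. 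Relatedly, the two facts that drive the threefold proof, namely the invariance of the intersection Hodge numbers and of the analytic singularity type along a flop, are theorems of Koll\'ar specific to threefolds; establishing their higher-dimensional counterparts, or at least the weaker statement that the single graded piece $\gr^F_{-(n-2)}$ is preserved, is the principal obstacle to upgrading this proposal to a theorem.
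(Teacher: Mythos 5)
The statement you are trying to prove is stated in the paper as Conjecture~\ref{conj:higher-dim-flops}; the authors give no proof, and indeed they remark immediately afterwards that they do not expect the other interior Hodge--Du Bois numbers to behave well under flips precisely because the flipping and flipped loci can have different dimensions in dimension $\ge 4$. So there is no argument in the paper to compare yours against, and your proposal does not close the gap either: it is a strategy outline whose load-bearing steps are exactly the open points. Concretely, you assume (a) a higher-dimensional analogue of Koll\'ar's invariance of intersection cohomology under terminal flops, (b) an analogue of his invariance of the local analytic singularity type, or at least the matching of the $\gr^F_{-(n-2)}$ pieces of the local cohomology mixed Hodge structures along $C$ and $C^+$, and (c) for flips, a ``weight-monotonicity'' of these local terms driven by $K_X$-negativity. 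None of these is established, and (b) is genuinely delicate because for $n\ge 4$ the exceptional loci are positive-dimensional, $\K_X^\bullet$ is an honest complex of mixed Hodge modules rather than a sum of Hodge structures at points, and Theorem~\ref{thm:local analytic Q-factoriality defect} no longer captures the relevant graded piece. To your credit, you identify all of this yourself in your final paragraph; but a proposal that ends by listing its own unproven hypotheses as ``the principal obstacle to upgrading this proposal to a theorem'' is a research plan, not a proof.

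One smaller point worth flagging: your preliminary claim that $\DB_X^p \isom I\DB_X^p$ for $p\ge n-1$ when $X$ is terminal, so that $p=n-2$ is the only index in the relevant conjugate pair carrying a defect, needs a citation or an argument in dimension $\ge 4$; the paper only invokes the corresponding fact (\cite[Proposition 7.4]{PP25a}) for threefolds with rational singularities. If you want to make progress on the conjecture, the excision/localization framework you describe is a reasonable scaffold, but the substance will have to come from proving the local comparison (b) --- for instance by constructing a common resolution and controlling $\gr^F_{-(n-2)}\DR(\K_X^\bullet)$ on both sides --- rather than from the global bookkeeping, which is the easy part.
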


We do not expect other Hodge-Du Bois numbers that are not close to the boundary to behave nicely under flips, since in general the  flipping and flipped loci may have different dimensions. 


\section{Other applications}

\subsection{Variants of Samuel's conjecture}\label{scn:loc-an-Samuel}
We use Theorem \ref{thm:local analytic Q-factoriality defect} and the results in Section \ref{scn:loc-an-def} to deduce various versions of Samuel's conjecture in the context of (local analytic) $\Q$-factoriality. See \cite[Corollaire 3.14]{SGA2} for the proof of the original statement by Grothendieck, which says that a local complete intersection which is factorial in codimension $3$ is in fact factorial everywhere.

We start with the following more general version of Corollary \ref{cor:Samuel-lcdef-0} in the Introduction, which corresponds to the case 
$\lcdef (X) = 0$.

\begin{cor}\label{cor:Samuel-lcdef} 
Let $X$ be a normal algebraic variety, satisfying either $S_3$ or $\H^1(\DB_X^0)=0$. If $$\codim_X \,\Sing(X) \ge \lcdef (X) + 4,$$ then $X$ is locally analytically $\Q$-factorial.
\end{cor}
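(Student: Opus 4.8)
The plan is to use Theorem~\ref{thm:local analytic Q-factoriality defect}, which tells us that proving local analytic $\Q$-factoriality at every point of $X$ amounts to verifying the two vanishings
$$
R^1\mu_*\O_{\widetilde X}=0 \qquad\text{and}\qquad \H^{-n+2}(\IC_X)=0,
$$
where $\mu\colon\widetilde X\to X$ is a resolution: the first is exactly condition (ii) of that theorem (finiteness of every $\sigma^{\rm an}(X;x)$), and, once it holds, the formula there bounds $\sigma^{\rm an}(X;x)$ above by $\dim_\Q\H^{-n+2}(\IC_X)_x$, so the second vanishing forces $\sigma^{\rm an}(X;x)=0$ for all $x$. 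We may assume $\Sing(X)\ne\emptyset$; writing $r:=\lcdef(X)$ and $s:=\dim\Sing(X)$, the hypothesis $\codim_X\Sing(X)\ge r+4$ reads $s+r\le n-4$.

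The single input driving both vanishings is a bound on the perverse amplitude of the RHM-defect object. I would use that the local cohomological defect governs $\K^\bullet_X$ in the form $\K^\bullet_X\in{}^p\!D^{[-r,0]}_c(X,\Q)$, with cohomology supported on $\Sing(X)$: the upper bound $\K^\bullet_X\in D^{\le 0}\mathrm{MHM}(X)$ is recalled in the excerpt, while the lower bound in terms of $r$ is the substantive input from the theory of $\lcdef$ in \cite{PP25a}. A complex in perverse amplitude $[-r,0]$ whose cohomology is supported in dimension $s$ has ordinary cohomology sheaves concentrated in degrees $[-(s+r),0]$, since a perverse sheaf on an $s$-dimensional space has ordinary cohomology in $[-s,0]$ and the perverse filtration contributes the extra shift by $r$. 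Hence
$$
\H^k(\K^\bullet_X)=0 \qquad\text{and}\qquad \H^k\!\big(\gr^F_0\DR(\K^\bullet_X)\big)=0 \qquad\text{for } k<-(s+r),
$$
the second because $\gr^F_0\DR$ of a Hodge module supported in dimension $s$ again lands in degrees $\ge -s$.

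For the second vanishing, passing to stalks and underlying $\Q$-complexes in the triangle (\ref{eqn:RHM-object}), and using that $\Q_X[n]$ has stalk concentrated in degree $-n$, gives a natural isomorphism $\H^{-n+2}(\IC_X)_x\simeq\H^{-n+3}(\K^\bullet_X)_x$; since $-n+3<-(n-4)\le -(s+r)$, the vanishing above yields $\H^{-n+2}(\IC_X)=0$. For the first vanishing I would split according to the two hypotheses. If $X$ is $S_3$, then because $\codim_X\Sing(X)\ge 4$ the variety is smooth, hence has rational singularities, in codimension $\le 3$, so Lemma~\ref{lem:Kollar} gives $R^1\mu_*\O_{\widetilde X}=0$ directly. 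If instead $\H^1(\DB_X^0)=0$, I would apply $\gr^F_0\DR(-)$ to (\ref{eqn:RHM-object}), obtaining a triangle $\gr^F_0\DR(\K^\bullet_X)\to\DB_X^0[n]\to R\mu_*\O_{\widetilde X}[n]\xrightarrow{+1}$ (using $\gr^F_0\DR(\Q_X^H[n])\simeq\DB_X^0[n]$ from \cite{Saito00} and $\gr^F_0\DR(\IC_X^H)\simeq R\mu_*\O_{\widetilde X}[n]$ as recalled in the excerpt); its long exact sequence embeds $R^1\mu_*\O_{\widetilde X}$ into $\H^{-n+2}(\gr^F_0\DR(\K^\bullet_X))$, which vanishes by the display above since $-n+2<-(s+r)$. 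Either way both vanishings hold and the proof is complete.

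The main obstacle is the key input of the second paragraph: pinning down that the local cohomological defect bounds the perverse amplitude of $\K^\bullet_X$ from below by $-\lcdef(X)$, together with the parallel statement for its lowest graded de Rham complex. This is also precisely where the hypothesis is calibrated, since $\codim_X\Sing(X)\ge\lcdef(X)+4$ is equivalent to $s+r\le n-4$, which is exactly what makes the two target degrees $-n+3$ and $-n+2$ fall strictly below the bottom degree $-(s+r)$ of the cohomology of $\K^\bullet_X$; everything else is formal bookkeeping with support and perversity.
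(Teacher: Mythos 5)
Your proof is correct and follows essentially the same route as the paper: the two vanishings $R^1\mu_*\O_{\widetilde X}=0$ and $\H^{-n+2}(\IC_X)=0$ via Theorem~\ref{thm:local analytic Q-factoriality defect}, the degree bookkeeping for $\K^\bullet_X$ and $\gr^F_0\DR(\K^\bullet_X)$ from its perverse amplitude $[-\lcdef(X),0]$ and support dimension, and the same case split (the paper routes the $S_3$ case through Corollary~\ref{cor:infinite-defect}, which is just Lemma~\ref{lem:Kollar} repackaged). The key input you flag as the "main obstacle" is exactly the fact the paper invokes from \cite[Section 6]{PP25a} without further proof.
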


\begin{proof}
Recall that the RHM-defect object $\K^\bullet_X$ is supported on the singular locus $\Sing(X)$,  in cohomological degrees $\ge -\lcdef(X)$. Hence the codimension condition on the singular locus implies that the constructible cohomologies of the underlying $\Q$-complex satisfy
$$
\H^{\le-n+3}(\K^\bullet_X)=0,
$$
where $n=\dim X$. This implies that $\H^{-n+2}(\IC_X)=0$. Moreover, we have
$$
\H^{\le -n+3}\left(\gr^F_0\DR(\K^\bullet_X)\right)=0,
$$
which follows from the fact that the graded de Rham complex of a mixed Hodge module supported on a variety $Z$ is supported in cohomological degrees $\ge -\dim Z$. When $\H^1(\DB_X^0)=0$, the distinguished triangle
$$
\gr^F_0\DR(\K^\bullet_X)[-n]\to \DB_X^0\to I\DB_X^0\xrightarrow{+1}
$$
shows that $\H^{1}(I\DB_X^0)=R^1\mu_*\O_{\widetilde X}=0$ for a resolution $\mu\colon \widetilde X\to X$. When $X$ is $S_3$, Corollary \ref{cor:infinite-defect} implies that $\sigma^{\rm an}(X;x)$ is finite for every $x \in X$, since $X$ is smooth in codimension $3$. Therefore, Theorem \ref{thm:local analytic Q-factoriality defect} completes the proof.
\end{proof}

Most importantly, we prove a local analytic version of Samuel's conjecture. 

\begin{thm}
\label{thm:Samuel conjecture}
Let $X$ be a normal algebraic variety, which is $S_3$ and satisfies $\lcdef(X)=0$. Then  $X$ is locally analytically $\Q$-factorial if and only if 
it is locally analytically $\Q$-factorial away from a subvariety of codimension at least $4$.
\end{thm}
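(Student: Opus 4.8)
The plan is to prove the nontrivial implication: assuming $X$ is locally analytically $\Q$-factorial away from a closed subvariety $W$ with $\codim_X W \ge 4$, I want to deduce that it is locally analytically $\Q$-factorial everywhere (the converse is immediate). First I would establish finiteness of the defect at every point. The hypothesis gives $\sigma^{\rm an}(X;x)=0$, in particular finite, for all $x\notin W$, so the locus where $\sigma^{\rm an}(X;x)$ is infinite is contained in $W$ and thus has codimension $\ge 4$. Since $X$ is $S_3$, \corollaryref{cor:infinite-defect} forces this locus to be empty or of pure codimension $2$; being of codimension $\ge 4$, it must be empty. Hence $\sigma^{\rm an}(X;x)$ is finite for every $x$, and by \theoremref{thm:local analytic Q-factoriality defect} we obtain $R^1\mu_*\O_{\widetilde X}=0$ for a resolution $\mu\colon\widetilde X\to X$; equivalently, by \lemmaref{lem:Kollar}, $X$ has rational singularities in codimension $2$.

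It then remains to show that the non-$\Q$-factorial locus $B:=\{x:\sigma^{\rm an}(X;x)>0\}$ is empty. By \theoremref{thm:local analytic Q-factoriality defect} we have $B=\Supp\mathcal F$ with $\mathcal F:=\ker\big(\H^{-n+2}(\IC_X)\to R^2\mu_*\O_{\widetilde X}\big)$. Because $\lcdef(X)=0$, the object $\K^\bullet_X$ is a single perverse sheaf concentrated in degree $0$, and passing to the long exact sequence of the triangle $\K^\bullet_X\to\Q_X[n]\to\IC_X$ gives $\H^{-n+2}(\IC_X)\simeq\H^{-n+3}(\K^\bullet_X)$, whose support has codimension $\ge 3$ by the perverse support condition. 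The crux is to show that $B$ has \emph{no} component of codimension $\ge 4$; combined with the inclusion $B\subseteq W$ this immediately yields $B=\emptyset$. To reach deep components I would pass to transverse slices, and the key point is that $R^1\mu_*\O_{\widetilde X}=0$ is preserved under a general hyperplane section: from $0\to\O_{\widetilde X}(-\widetilde H)\to\O_{\widetilde X}\to\O_{\widetilde H}\to 0$ with $\widetilde H=\mu^*H$, the vanishing of $R^1\mu_*\O_{\widetilde X}$ gives
$$
R^1\mu_*\O_{\widetilde H}\simeq\ker\big(R^2\mu_*\O_{\widetilde X}(-H)\xrightarrow{\,\cdot H\,}R^2\mu_*\O_{\widetilde X}\big),
$$
which is zero since a general section $H$ is a non-zerodivisor on the coherent sheaf $R^2\mu_*\O_{\widetilde X}$. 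As $\lcdef$, normality, and the restriction behaviour of $\IC_X$ are likewise preserved under general sections, a component of $B$ of codimension $c\ge 4$ can be cut down, by $n-c$ general hyperplanes through a general point $x$, to an \emph{isolated} point of the defect locus of a slice $X''$ with $\dim X''=c\ge 4$, still satisfying $\lcdef(X'')=0$ and $R^1\mu''_*\O=0$, and with $\sigma^{\rm an}(X'';x)>0$ (the positivity surviving by compatibility of the cycle class morphism of \theoremref{thm:local Lefschetz-(1,1)} with general sections).

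Thus everything reduces to the following isolated statement, which is the main obstacle: if $X''$ has $\lcdef(X'')=0$ and $R^1\mu''_*\O=0$, and $x$ is an isolated point of the non-$\Q$-factorial locus with $m:=\dim X''\ge 4$, then $\sigma^{\rm an}(X'';x)=0$. Since $x$ is isolated in the defect locus, every class in $\mathcal F''_x$ restricts to zero on a punctured neighbourhood, so $\mathcal F''_x$ consists of germs of $\H^{-m+2}(\IC_{X''})_x=\H^{-m+3}(\K^\bullet_{X''})_x$ supported at $x$. I would argue these vanish: because $\lcdef(X'')=0$ the complex $\K^\bullet_{X''}$ is a single perverse sheaf, so its cosupport condition gives $\H^{j}(\iota_x^!\K^\bullet_{X''})=0$ for $j<0$, and for $m\ge 4$ the degree $-m+3$ is strictly negative, which forces the sections of $\H^{-m+3}(\K^\bullet_{X''})$ supported at $x$ to be zero. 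This contradicts $\sigma^{\rm an}(X'';x)>0$, so no component of codimension $\ge 4$ can exist, completing the argument.

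The hard part will be precisely this last vanishing, and in particular checking, through the spectral sequence comparing $\mathcal H^p_{\{x\}}\big(\H^q(\K^\bullet_{X''})\big)$ with $\H^{p+q}(\iota_x^!\K^\bullet_{X''})$, that the \emph{non-strict} cosupport of $\K^\bullet_{X''}$ (a general perverse sheaf, not an intersection complex) still kills the point-supported sections in degree $-m+3$. This is the one place where the depth hypothesis genuinely re-enters: the $S_3$ condition, equivalently $R^1\mu''_*\O=0$, controls the relevant local cohomology in the bottom range through the Kollár-type duality used in \lemmaref{lem:Kollar}, and the extra input that makes $S_3$ suffice — rather than the $S_4$ one would expect from the classical lci form of Samuel's conjecture — is exactly the perversity of $\K^\bullet_X$ coming from $\lcdef(X)=0$, in the same spirit as \corollaryref{cor:Samuel-lcdef}.
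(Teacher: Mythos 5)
Your opening step coincides with the paper's: \corollaryref{cor:infinite-defect} upgrades finiteness of the defect from the complement of $W$ to all of $X$, giving $R^1\mu_*\O_{\widetilde X}=0$, and the problem becomes the injectivity of $\H^{-n+2}(\IC_X)_x\to(R^2\mu_*\O_{\widetilde X})_x$ at every point. After that there is a genuine gap, and you have located it yourself: the ``isolated statement'' is asserted rather than proved, and the justification you offer (the cosupport condition $\H^j(\iota_x^!\K^\bullet_{X''})=0$ for $j<0$) does not by itself kill point-supported sections of $\H^{-m+3}(\K^\bullet_{X''})$. In the spectral sequence $E_2^{p,q}=\H^p_{\{x\}}\big(\H^q(\K^\bullet_{X''})\big)\Rightarrow\H^{p+q}(\iota_x^!\K^\bullet_{X''})$ the group you must annihilate is $E_2^{0,-m+3}$; the vanishing of the abutment only gives $E_\infty^{0,-m+3}=0$, and a class could a priori die along $d_r\colon E_r^{0,-m+3}\to E_r^{r,-m+4-r}$, whose targets involve $\H^q(\K^\bullet_{X''})$ for $q\le -m+2$. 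The missing input is that $\K^\bullet_X$ is supported in codimension $\ge 3$: rational singularities in codimension $2$ (which you already have from $R^1\mu_*\O_{\widetilde X}=0$ and \lemmaref{lem:Kollar}) together with the fact that rational surface singularities are rational homology manifolds force this, hence $\H^q(\K^\bullet_X)=0$ for $q<-n+3$. Only then is $-m+3$ the bottom cohomological degree, so the higher differentials vanish and $\Gamma_{\{x\}}\big(\H^{-m+3}(\K^\bullet_{X''})\big)=\H^{-m+3}(\iota_x^!\K^\bullet_{X''})=0$. Your closing suggestion that $S_3$ re-enters ``through the Koll\'ar-type duality'' is not the operative mechanism; this codimension bound is.

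Separately, the hyperplane-slicing reduction is both under-justified and avoidable. Preservation under general sections \emph{through the fixed point $x$} of normality, of $\lcdef=0$, of $R^1\mu_*\O=0$, of transversality to the stratification at $x$, and of the (non)vanishing of the local defect each require proof; for instance a hyperplane constrained to pass through $x$ need not be a non-zerodivisor on $R^2\mu_*\O_{\widetilde X}$ if $x$ lies on an associated subvariety, and local analytic $\Q$-factoriality at nearby points of $X$ does not formally descend to the slice. The paper bypasses all of this by arguing directly on $X$: with $Y=\Supp\K^\bullet_X$ of codimension $\ge 3$ and $j\colon Y\sm Z\hookrightarrow X$ for a codimension $\ge 4$ subset $Z$ chosen so that $\K^\bullet_X|_{Y\sm Z}\simeq\mathbb V[n-3]$ and $X\sm Z$ is locally analytically $\Q$-factorial, the triangle $\mathcal C^\bullet\to\K^\bullet_X\to j_*j^*\K^\bullet_X$ with $\mathcal C^\bullet\in D^{\ge 0}$ supported on $Z$ yields $\H^{-n+2}(\IC_X)=\H^{-n+3}(\K^\bullet_X)\hookrightarrow j_*\mathbb V$, so a nonzero kernel germ at $x$ survives at a nearby point $y\notin Z$, contradicting injectivity there. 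Adopting this direct propagation argument (or, equivalently, running your vanishing with $\iota_Z^!$ in place of $\iota_x^!$) removes the need for slicing entirely.
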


The statement applies in particular when $X$ is a local complete intersection, as in Corollary \ref{cor:Samuel conjecture} in the Introduction.

\begin{proof}
Since $\sigma^{\rm an}(X;x)= 0$ for $x$ away from a closed subset of codimension at least $4$, and $X$ is $S_3$, Corollary \ref{cor:infinite-defect} implies that $\sigma^{\rm an}(X;x)$ is finite for every $x \in X$, or equivalently $R^1\mu_*\O_{\widetilde X}=0$. 
Therefore the second part of Theorem \ref{thm:local analytic Q-factoriality defect} applies at every point of $X$.

We consider the distinguished triangle ($\ref{eqn:RHM-object}$): 
$$
\K_X^\bullet\to\Q^H_X[n]\to\IC^H_X\xrightarrow{+1}.
$$
Recall from \cite[Section 6]{PP25a} that the condition $\lcdef (X) = 0$ is equivalent to $\K_X^\bullet$ being supported only in degree $0$, while being a rational homology manifold at a point $x$ is equivalent to $(\K_X^\bullet)_{x} = 0$.

Now since $X$ has rational singularities in codimension $2$ (see the proof of Corollary \ref{cor:infinite-defect}), and it is well known that  rational surface singularities are rational homology manifolds, $\K^\bullet_X$ is supported on a closed subset $Y$ of codimension $\ge 3$. Then there exists a closed subset $Z\subset Y$ of codimension $\ge 4$ such that $Y\sm Z$ is smooth of pure dimension $n-3$ and 
$$\K^\bullet_X|_{Y\sm Z} \simeq \mathbb V[n-3],$$
where $\mathbb V$ is a local system  on $Y\sm Z$. By the hypothesis, we may further assume that $X\sm Z$ is locally analytically $\Q$-factorial. Denote by $j\colon Y\sm Z\hookrightarrow X$ the inclusion, and consider the distinguished triangle 
$$
\mathcal C^\bullet\to\K^\bullet_X\to j_*j^*\K^\bullet_X\xrightarrow{+1}.
$$
By the discussion above, the condition $\lcdef(X)=0$ implies that $\mathcal C^\bullet\in D^{\ge0}{\rm MHM}(X)$. Additionally, $\mathcal C^\bullet$ is supported on $Z$, and thus for the constructible cohomology in degree $-n+3$ we have $\H^{-n+3}(\mathcal C^\bullet)=0$. This implies the following inclusion of the constructible cohomologies of underlying $\Q$-complexes:
$$
\H^{-n+3}(\K^\bullet_X)=\H^{-n+2}(\IC_X)\hookrightarrow j_*\mathbb V.
$$

Let $x\in X$ be a closed point. By Theorem \ref{thm:local analytic Q-factoriality defect}, it suffices to prove that
$$
\H^{-n+2}(\IC_X)_x\to (R^2\mu_*\O_{\widetilde X})_x
$$
is injective. We argue by contradiction; suppose a nonzero element $\alpha\in\H^{-n+2}(\IC_X)_x$ maps to zero in $(R^2\mu_*\O_{\widetilde X})_x$. We lift $\alpha$ to a small neighborhood $U$ of $x$, so that $\alpha$ is a nonzero section of $\H^{-n+2}(\IC_U)$. Since $X\sm Z$ is locally analytically $\Q$-factorial, the map
$$
\H^{-n+2}(\IC_X)_y=\mathbb V_y\to (R^2\mu_*\O_{\widetilde X})_y
$$
is injective for all $y\in X\sm Z$. Since $\H^{-n+2}(\IC_X)\subset j_*\mathbb V$, there exist a point $y\in U\sm Z$ near $x$ such that $\alpha|_y\neq0$ and its image in $(R^2\mu_*\O_{\widetilde X})_y$ is zero. This is a contradiction.
\end{proof}

\subsection{Behavior in families}
Moving on to global statements, we start with two quick consequences of Theorem \ref{thm:Q-factoriality-defect}. The first establishes the constructibility of the defect of $\Q$-factoriality for projective families of varieties with rational singularities, which generalizes \cite[Proposition 12.1.7]{KM92} to arbitrary values of the $\Q$-factoriality defect in projective families.

\begin{cor}
\label{cor: constructibilty of Q-factoriality defect}
Let $f:\mathscr X\to S$ be a connected flat family of projective varieties with rational singularities over a variety $S$. For every nonnegative integer $m$, the set of closed points $s\in S$ with $\sigma(f^{-1}(s))=m$ is a constructible subset of $S$.
\end{cor}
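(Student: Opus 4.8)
The plan is to reduce the statement to the constructibility of Betti numbers in families, the bridge being the formula for $\sigma$ of a variety with rational singularities supplied by Theorem~\ref{thm:Q-factoriality-defect}. Since by hypothesis every fiber $X_s := f^{-1}(s)$ has rational singularities, it is in particular normal and projective, so that theorem applies pointwise and gives
$$
\sigma(X_s) = h^{2n_s - 2}(X_s) - h^2(X_s), \qquad n_s := \dim X_s.
$$
Thus it suffices to prove that $s \mapsto \sigma(X_s)$ is a \emph{constructible function} on $S$, meaning that it takes only finitely many values, each on a Zariski-constructible subset; the locus $\{s : \sigma(X_s) = m\}$ is then constructible for every $m$, being one of the level sets.

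First I would stratify by fiber dimension. The function $s \mapsto \dim X_s$ is constructible on $S$ (upper semicontinuous by Chevalley, and in fact locally constant for the flat morphism $f$), so $S$ decomposes into finitely many locally closed subsets on which $n_s$ is a constant $n$. As a finite union of constructible sets is constructible, I may replace $S$ by one such stratum $S'$, and $f$ by its restriction $f' \colon f^{-1}(S') \to S'$, and thereby assume $\dim X_s = n$ for all $s \in S'$.

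The crux is the constructibility of the individual Betti numbers. Working in the classical topology, I would form the derived pushforward $Rf'_*\Q$, whose cohomology sheaves $R^i f'_*\Q$ are $\Q$-constructible. By Verdier's generic local triviality theorem (equivalently, Thom--Mather stratification theory), there is a finite algebraic stratification of $S'$ into locally closed smooth strata over each of which $f'$ is a locally trivial topological fibration; hence each $R^i f'_*\Q$ restricts to a local system, and the fibers over a connected stratum are homeomorphic. Consequently $s \mapsto h^i(X_s) = \dim_\Q H^i(X_s,\Q)$ is constant on each such stratum, and therefore a constructible function on $S'$ for every $i$. This is the only substantive input; the rest is bookkeeping.

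Finally, the difference $s \mapsto h^{2n-2}(X_s) - h^2(X_s) = \sigma(X_s)$ is constructible on $S'$, being a difference of constructible functions, so its fiber over any integer $m$ is a constructible subset of $S'$. Taking the finite union over all the dimension strata $S'$ shows that $\{s \in S : \sigma(X_s) = m\}$ is constructible in $S$, as desired. I expect no serious obstacle here: the geometric content is entirely absorbed into the pointwise formula of Theorem~\ref{thm:Q-factoriality-defect}, after which the argument is the standard constructibility of Betti numbers via Verdier's stratification theorem together with elementary manipulations of constructible functions.
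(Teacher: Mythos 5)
Your proposal is correct and takes essentially the same route as the paper: both reduce to the formula $\sigma(f^{-1}(s))=h^{2n-2}(f^{-1}(s))-h^2(f^{-1}(s))$ from Theorem~\ref{thm:Q-factoriality-defect} and then invoke the constructibility of the fiberwise Betti numbers. The only cosmetic difference is that the paper gets this from the constructibility of $R^2f_*\Q_{\mathscr X}$ and $R^{2n-2}f_*\Q_{\mathscr X}$ together with proper base change, whereas you route it through Verdier's generic local triviality; these amount to the same thing.
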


\begin{proof}
Let $n$ be the fiber dimension of $f$. Since $R^2f_*\Q_{\mathscr X}$ and $R^{2n-2}f_*\Q_{\mathscr X}$ are constructible sheaves on $S$, the set of closed points $s\in S$ with
$$
\dim_\Q R^{2n-2}f_*\Q_{\mathscr X}|_s-\dim_\Q R^2f_*\Q_{\mathscr X}|_s=m
$$
is constructible. Therefore, we have the conclusion by Theorem \ref{thm:Q-factoriality-defect} and the proper base change theorem.
\end{proof}

The second consequence establishes that klt singularities of pairs deform when the ambient projective variety $X$ is fixed. Note that klt singularities do not deform in general (see e.g. \cite[Example 4.3]{Kawamata99}).

\begin{cor}
\label{cor: klt deforms X fixed}
Let $X$ be a normal projective variety and $f:(X\times S,D)\to S$ be a flat family of pairs $(X,D_s)$ over a variety $S$, where $D$ is an effective $\Q$-Weil divisor on $X\times S$. If $(X,D_{s_0})$ is klt for $s_0\in S$, then $(X,D_s)$ is klt for all $s\in S$ in a neighborhood of $s_0$.
\end{cor}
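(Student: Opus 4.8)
The plan is to split the statement into two independent parts: first, that the log canonical class $K_X+D_s$ stays $\Q$-Cartier as $s$ moves near $s_0$; and second, that the klt property itself is preserved. The first part is where the results of this paper enter, and the second is where the deformation result of \cite{ST23} is used. Throughout I would work locally near $s_0$ and assume $S$ is connected. The first observation is that, since $(X,D_{s_0})$ is klt with $D_{s_0}$ effective, $X$ has rational singularities (see e.g.\ \cite[Theorem 5.22]{KM98}). Hence, for a resolution $\mu\colon\widetilde X\to X$ one has $R\mu_*\O_{\widetilde X}=\O_X$, so $\mu^*\colon H^1(X,\O_X)\to H^1(\widetilde X,\O_{\widetilde X})$ is an isomorphism. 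As recorded in the proof of Proposition \ref{prop: algebraic equivalence and Cartier divisors}, this is equivalent to $h^1(X)=h^{2n-1}(X)$, where $n=\dim X$. Thus condition (i) of Theorem \ref{thm:Q-factoriality-defect} holds, and therefore so does condition (ii):
\[
\mathrm{Alg}^1_\Q(X)\subset \CDiv_\Q(X).
\]

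Next I would use this inclusion to control the log canonical class. Since $D\subset X\times S$ is flat over the connected base $S$, the restrictions $\{D_s\}_{s\in S}$ form an algebraic family of $\Q$-Weil divisors on the \emph{fixed} variety $X$; consequently $D_s-D_{s_0}$ is algebraically equivalent to zero, i.e. $D_s-D_{s_0}\in\mathrm{Alg}^1_\Q(X)$ for all $s$. By the inclusion above, $D_s-D_{s_0}$ is $\Q$-Cartier. Since $K_X+D_{s_0}$ is $\Q$-Cartier by the klt hypothesis, it follows that
\[
K_X+D_s=(K_X+D_{s_0})+(D_s-D_{s_0})
\]
is $\Q$-Cartier for every $s$ near $s_0$. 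I emphasize that $D_{s_0}$ itself need not be $\Q$-Cartier, so it is essential to argue with the difference $D_s-D_{s_0}$ rather than with $D_{s_0}$ directly; this is exactly the point at which the Lefschetz-type input of Theorem \ref{thm:Q-factoriality-defect} is indispensable, as a general normal $X$ may well have $\mathrm{Alg}^1_\Q(X)\not\subset\CDiv_\Q(X)$.

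With the $\Q$-Cartierness of $K_X+D_s$ established, the remaining point is that klt is preserved in the family. Here I would invoke \cite{ST23}: once the log canonical class is $\Q$-Cartier throughout the family over the fixed ambient space $X$, their deformation/openness result shows that the klt locus is open in $S$, so that $(X,D_s)$ is klt for all $s$ in a neighborhood of $s_0$. I expect this last step to be the main obstacle. Establishing $\Q$-Cartierness is the new contribution made possible by Theorem \ref{thm:Q-factoriality-defect}, but passing from the pointwise $\Q$-Cartier statement to an actual deformation of the klt property --- that is, controlling discrepancies uniformly on a common log resolution as $s$ varies --- is what genuinely requires \cite{ST23}. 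One subtlety I would need to verify carefully is that the fiberwise $\Q$-Cartier structures can be organized coherently (for instance via a uniform Cartier index, or relative $\Q$-Cartierness of $K_{X\times S/S}+D$ on the total space $X\times S$) so as to match the precise hypotheses of that result.
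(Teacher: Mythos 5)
Your proposal is correct and follows essentially the same route as the paper: rational singularities of $X$ (from the klt hypothesis via \cite[Theorem 5.22]{KM98}) put $X$ in the situation of Theorem \ref{thm:Q-factoriality-defect}, so $\mathrm{Alg}^1_\Q(X)\subset\CDiv_\Q(X)$ and hence the algebraically trivial difference $D_s-D_{s_0}$ is $\Q$-Cartier, making $K_X+D_s$ $\Q$-Cartier, after which \cite[Theorem A]{ST23} concludes. The paper's proof is just a compressed version of exactly this argument (it treats the final appeal to Sato--Takagi as immediate rather than as the ``main obstacle''), so no further comparison is needed.
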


\begin{proof}
Since $X$ has rational singularities (see e.g. \cite[Theorem 5.22]{KM98}) and $K_X+D_{s_0}$ is $\Q$-Cartier, Theorem \ref{thm:Q-factoriality-defect} implies that $K_X+D_s$ is $\Q$-Cartier for all $s\in S$. Therefore, we conclude from Sato-Takagi \cite[Theorem A]{ST23}.
\end{proof}

\subsection{Bertini theorem for the defect of \texorpdfstring{$\Q$}{Q}-factoriality}
Here we record a ``bonus" consequence of Theorem \ref{thm:Q-factoriality-defect}; when combined with the Lefschetz hyperplane theorems in \cite{PP25a}, it can be used to deduce a restriction theorem for the defect of $\Q$-factoriality.

\begin{thm}
\label{thm: Bertini for defect of Q-factoriality}
Let $X$ be a normal projective variety of dimension $n\ge 4$. Let $L$ be an ample line bundle on $X$, with a base point free linear system $|V|\subset H^0(X,L)$. Then, for a general element $D\in |V|$, we have
$$
\sigma(D)\le\sigma(X).
$$
In particular, if $X$ is $\Q$-factorial, then $D$ is $\Q$-factorial.
Furthermore, if $\lcdef(X)\le n-4$ and $H^2(X, L^{-1})=0$, then
$$
\sigma(D)=\sigma(X).
$$
\end{thm}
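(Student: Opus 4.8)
The plan is to read everything off the Hodge-theoretic formula of Theorem \ref{thm:Q-factoriality-defect} and to compare its two terms for $X$ and for $D$ using the Lefschetz hyperplane theorems of \cite{PP25a}. First I would note that a general $D\in|V|$ is a normal projective variety of dimension $n-1\ge 3$ (Bertini/Seidenberg for normality of general members of a base-point-free system on a normal variety), so that $\sigma(D)$ makes sense. For the inequality $\sigma(D)\le\sigma(X)$ I would invoke \cite{RS06}: for $n\ge 4$ and $D$ general, restriction induces an isomorphism of $\Q$-divisor class groups $\rho\colon\mathrm{Cl}_\Q(X)\xrightarrow{\sim}\mathrm{Cl}_\Q(D)$. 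Writing $\sigma=\dim_\Q \mathrm{Cl}_\Q/\Pic_\Q$ (principal divisors are Cartier) and observing that restriction carries $\Pic_\Q(X)$ into $\Pic_\Q(D)$, the composite $\mathrm{Cl}_\Q(X)/\Pic_\Q(X)\xrightarrow{\rho,\ \sim}\mathrm{Cl}_\Q(D)/\rho(\Pic_\Q(X))\twoheadrightarrow\mathrm{Cl}_\Q(D)/\Pic_\Q(D)$ shows $\sigma(D)\le\sigma(X)$. The ``in particular'' assertion is then immediate: if $X$ is $\Q$-factorial then $\sigma(X)=0$, whence $\sigma(D)=0$.

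For the equality I would work with the formula $\sigma=\dim_\Q(\mathit{IH}^2\cap \mathit{IH}^{1,1})-\dim_\Q V$, where $V:=\ker(H^2(-,\Q)\to H^2(-,\O))$, valid once $\sigma$ is finite. The hypothesis $\lcdef(X)\le n-4$ forces $\sigma(X)<\infty$: the RHM-defect object $\K^\bullet_X$ has cohomology sheaves concentrated in degrees between $-\lcdef(X)$ and $0$, so $\HH^{j}(X,\K^\bullet_X)=0$ for $j<-\lcdef(X)$; in particular $\HH^{-n+1}(X,\K^\bullet_X)=\HH^{-n+2}(X,\K^\bullet_X)=0$, and the triangle \eqref{eqn:RHM-object} gives $H^1(X,\Q)\simeq \mathit{IH}^1(X,\Q)$. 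By Lemma \ref{lem: H1 and H2 of normal variety} this is exactly $h^1(X)=h^{2n-1}(X)$, so $\sigma(X)$ is finite, and by the inequality $\sigma(D)$ is finite as well. It then remains to match the two terms of the formula across $D\hookrightarrow X$. The first term matches by the weak Lefschetz theorem for intersection cohomology: since $n\ge 4$, restriction is an isomorphism $\mathit{IH}^2(X,\Q)\xrightarrow{\sim}\mathit{IH}^2(D,\Q)$ of pure Hodge structures of weight $2$, so the $(1,1)$-parts have equal dimension.

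The heart of the matter is the second term, and this is where the two remaining hypotheses enter. The bound $\lcdef(X)\le n-4$ places $k=2$ in the isomorphism range of the $\lcdef$-twisted weak Lefschetz theorem of \cite[Section 8]{PP25a}, so that restriction $r\colon H^2(X,\Q)\to H^2(D,\Q)$ is an isomorphism of mixed Hodge structures. The vanishing $H^2(X,L^{-1})=0$, fed into the long exact sequence of
$$0\to L^{-1}\to \O_X\to \O_D\to 0,$$
shows that the restriction $s\colon H^2(X,\O_X)\to H^2(D,\O_D)$ is injective. In the commutative square relating $r$, $s$, and the two maps $H^2(-,\Q)\to H^2(-,\O)$, the isomorphism $r$ together with injectivity of $s$ forces $r$ to carry $V_X=\ker(H^2(X,\Q)\to H^2(X,\O_X))$ isomorphically onto $V_D$: indeed $r(V_X)\subseteq V_D$ is automatic, while $r(v)\in V_D$ gives $s(a(v))=0$, hence $a(v)=0$ and $v\in V_X$, where $a$ is the top horizontal map. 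Plugging the two matchings into the formula yields $\sigma(D)=\sigma(X)$.

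The main obstacle I anticipate is the bookkeeping of the precise numerology of the two Lefschetz theorems for a general member of a base-point-free (not necessarily very ample) ample system, and verifying that all three comparison maps---on $\mathit{IH}^2$, on $H^2$, and on $H^2(-,\O)$---are simultaneously compatible (mixed) Hodge-theoretic morphisms fitting into a single commutative diagram, so that the chase identifying $V_X$ with $V_D$ is legitimate. Once the Lefschetz statements of \cite{PP25a} are cited in the correct range and their compatibility with $H^\bullet\to \mathit{IH}^\bullet$ is recorded, the remaining steps are formal.
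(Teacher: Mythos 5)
Your proof of the inequality $\sigma(D)\le\sigma(X)$ via the Ravindra--Srinivas isomorphism $\mathrm{Cl}_\Q(X)\simeq\mathrm{Cl}_\Q(D)$ is correct, and it is a genuinely different (and more economical) route than the paper's: the paper acknowledges that the inequality already follows from \cite{RS06}, but its own argument stays entirely inside the Hodge-theoretic formula of Theorem \ref{thm:Q-factoriality-defect}, first transferring finiteness of $\sigma$ from $X$ to $D$ via $h^1=Ih^1$ and the two weak Lefschetz theorems, and then producing an injection $\ker(H^2(X,\Q)\to H^2(X,\O_X))\hookrightarrow\ker(H^2(D,\Q)\to H^2(D,\O_D))$ by a weight argument (Lemma \ref{lem: injectivity of (1,1) kernels restricted to weight 2} applied to $\gr^W_2H^2$). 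Your route buys brevity at the cost of importing the full strength of \cite{RS06}; the paper's route is self-contained relative to its Theorem \ref{thm:Q-factoriality-defect}. For the equality, your comparison of the two terms of the formula (weak Lefschetz for $\mathit{IH}^2$, the isomorphism $H^2(X,\Q)\to H^2(D,\Q)$ from the $\lcdef$-twisted weak Lefschetz theorem, and injectivity of $H^2(X,\O_X)\to H^2(D,\O_D)$ from $H^2(X,L^{-1})=0$) is exactly the paper's argument.

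There is, however, a genuine gap in your treatment of finiteness in the equality part. You claim that $\lcdef(X)\le n-4$ forces $\sigma(X)<\infty$, justified by the assertion that $\HH^{j}(X,\K^\bullet_X)=0$ for $j<-\lcdef(X)$, and in particular $\HH^{-n+2}(X,\K^\bullet_X)=0$. This vanishing is false: $\K^\bullet_X$ lies in perverse degrees $[-\lcdef(X),0]$ but is supported on $\Sing(X)$, which may have dimension $n-2$, so $\HH^{j}(X,\K^\bullet_X)$ can be nonzero already for $j\ge -(n-2)-\lcdef(X)$; in particular $\HH^{-n+2}(X,\K^\bullet_X)$ is typically nonzero whenever $\Sing(X)$ has codimension $2$ (e.g.\ $\K^\bullet_X$ a perverse sheaf with a summand $\Q_Z[n-2]\otimes V$ on a surface component $Z$ when $n=4$). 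Consequently your derivation of $H^1(X,\Q)\simeq\mathit{IH}^1(X,\Q)$, hence of the finiteness of $\sigma(X)$, does not go through, and the hypotheses of the theorem do not in fact guarantee that $\sigma(X)$ is finite. The repair is the one the paper uses: the weak Lefschetz theorems give $h^1(X)=h^1(D)$ and $Ih^1(X)=Ih^1(D)$, so by the criterion of Theorem \ref{thm:Q-factoriality-defect} (together with $h^{2n-1}(X)=Ih^1(X)$ from Lemma \ref{lem: H1 and H2 of normal variety}) $\sigma(X)$ is finite if and only if $\sigma(D)$ is; when both are infinite the asserted equality is vacuous, and when both are finite your term-by-term comparison applies. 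With that substitution your argument is complete.
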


Here $\lcdef (X)$ is the local cohomological defect of $X$; for a discussion and references, please see \cite[Section 2]{PP25a}.

Note however that the inequality $\sigma (D) \le \sigma (X)$ follows already from the work of Ravindra-Srinivas \cite{RS06}, who prove the stronger fact that, under our hypotheses, there is a Grothendieck-Lefschetz isomorphism of divisor class  groups ${\rm Cl} (X) \simeq {\rm Cl} (D)$. The second statement on the equality case 
is not very restrictive, as seen in the following: 

\begin{ex}\label{ex:sigma-eq}
Here are some examples of varieties for which the last condition holds: 

\noindent
$\bullet$~~First, by \cite[Theorem 1.3]{DT16} we know that the condition $\lcdef(X)\le n-4$ holds if ${\rm depth} (\O_X) \ge 4$ (so for instance any Cohen-Macaulay variety of dimension at least $4$) and the local analytic Picard groups of $X$ are torsion. 

\noindent
$\bullet$~~If $X$ is Du Bois with $\lcdef(X)\le n-4$, then the condition $H^2(X, L^{-1})=0$ holds automatically by \cite[Theorem 5.1]{PS24}. 

\noindent 
$\bullet$~~If $X$ is a complete intersection variety of dimension $\ge 4$ in a projective space $\P^N$, and $L=\O_X (1)$, then $\lcdef(X) = 0$, and the 
cohomology vanishing holds for straightforward reasons. In this context, it is worth recalling Grothendieck's theorem stating that any local complete intersection with $\codim_X\,\Sing(X) \ge 4$ is factorial; see also the beginning of Section \ref{scn:loc-an-Samuel}.
\end{ex}

\begin{proof}[Proof of Theorem \ref{thm: Bertini for defect of Q-factoriality}]
Recall  first  that $H^1(X,\Q)$ is pure of weight $1$, since  $X$ is normal. This implies that the natural map
$$
H^1(X,\Q)\to \mathit{IH}^1(X,\Q)
$$
is injective, since the kernel is of weight $\le 0$; see \cite[Remark 6.5]{PP25a}. In particular, $h^1(X)\le Ih^1(X)$.

By   Theorem \ref{thm:Q-factoriality-defect}, if $\sigma(X)$ is finite, then in fact $h^1(X)= Ih^1(X)$. Since $X$ is normal, hence ${\rm depth}(\O_X) \ge 2$, we have 
$\lcdef (X) \le n-2$ by a result of Ogus (see e.g. \cite[Example 2.5]{PP25a}), and therefore we have $h^1(X)\le h^1(D)$ by the singular weak Lefschetz statement \cite[Theorem B]{PP25a}.
Additionally, we have $Ih^1(X)= Ih^1(D)$ by the weak Lefschetz Theorem for intersection cohomology \cite[Section II.6.10]{GM88}. Thus, $Ih^1(D)\le h^1(D)$. 
Note however that $D$ is also a normal projective variety, and therefore $h^1(D) \le  Ih^1(D)$ by the argument in the previous paragraph. 
We conclude that $h^1(D) = Ih^1(D)$, hence $\sigma(D)$ is finite, again by  Theorem \ref{thm:Q-factoriality-defect}.

Therefore, it suffices to prove that $\sigma(D ) \le \sigma(X)$ when both are finite. Again by the weak Lefschetz for intersection cohomology, we have
$$
\mathit{IH}^2(X,\Q)\cap \mathit{IH}^{1,1}(X) \simeq \mathit{IH}^2(D,\Q)\cap \mathit{IH}^{1,1}(D).
$$
Applying Theorem \ref{thm:Q-factoriality-defect}, it remains to prove the following inequality
$$
\dim_\Q(\ker(H^2(X,\Q)\to H^2(X,\O_X)))\le \dim_\Q(\ker(H^2(D,\Q)\to H^2(D,\O_D))).
$$
To this end, consider the commutative diagram
\begin{displaymath}
\xymatrix{
{\Q_X^H[n]}\ar[d]_{}\ar[r]^-{}& {\IC_X^H}\ar[d]_{}\\
{\iota_*\Q_D^H[n]}\ar[r]^-{}&{\iota_*\IC_D^H[1]}
}
\end{displaymath}
where the vertical maps come from adjunction for $\iota:D\hookrightarrow X$; recall that there is an isomorphism $\iota^*\IC^H_X \simeq \IC^H_D[1]$ when $D$ is transverse to a Whitney stratification of $X$.
Taking hypercohomology, we obtain the commutative diagram:
\begin{displaymath}
\xymatrix{
{H^2(X,\Q)}\ar[d]_{}\ar[r]^-{}& {\mathit{IH}^2(X,\Q)}\ar[d]_{}\\
{H^2(D,\Q)}\ar[r]^-{}&{\mathit{IH}^2(D,\Q)}
}
\end{displaymath}
By \cite[Proposition 6.4]{PP25a}, the object $\K^\bullet_X$ in $(\ref{eqn:RHM-object})$ is of weight $\le n-1$, hence the kernels of the horizontal maps are of weight $\le 1$. This leads to the following commutative diagram:
\begin{displaymath}
\xymatrix{
{\gr^W_2H^2(X,\Q)}\ar[d]_{}\ar@{^{(}->}[r]& {\mathit{IH}^2(X,\Q)}\ar[d]_{}\\
{\gr^W_2H^2(D,\Q)}\ar@{^{(}->}[r]&{\mathit{IH}^2(D,\Q)}
}
\end{displaymath}
Since the right vertical map is an isomorphism as above, we deduce a natural inclusion
\begin{equation}
\label{eqn: injective restriction of weight 2 H2}
\gr^W_2H^2(X,\Q)\hookrightarrow \gr^W_2H^2(D,\Q).  
\end{equation}

Recall now from the proof of Theorem \ref{thm:Q-factoriality-defect} that the map $H^2(X,\Q)\to \gr^F_0H^2(X,\C)$ is the following composition
$$
H^2(X,\Q)\to H^2(X,\O_X)\to H^2(X,\DB^0_X).
$$
Therefore, applying Lemma \ref{lem: injectivity of (1,1) kernels restricted to weight 2}, we have a natural inclusion
$$
\ker(H^2(X,\Q)\to H^2(X,\O_X))\hookrightarrow\ker(\gr^W_2H^2(X,\Q)\to \gr^F_0(\gr^W_2H^2(X,\C))).
$$
This induces the commutative diagram
\begin{displaymath}
\xymatrix{
{\ker(H^2(X,\Q)\to H^2(X,\O_X))}\ar[d]_{}\ar@{^{(}->}[r]& {\ker(\gr^W_2H^2(X,\Q)\to \gr^F_0(\gr^W_2H^2(X,\C)))}\ar[d]_{}\\
{\ker(H^2(D,\Q)\to H^2(D,\O_D))}\ar@{^{(}->}[r]&{\ker(\gr^W_2H^2(D,\Q)\to \gr^F_0(\gr^W_2H^2(D,\C)))}
}
\end{displaymath}
where the horizontal maps are injective and the right vertical map is injective by \eqref{eqn: injective restriction of weight 2 H2}. Consequently, the left vertical map is injective, from which we finally conclude that
$$
\sigma(D)\le\sigma(X).
$$

Assume now that $\lcdef(X)\le n-4$ and $H^2(X, L^{-1})=0$. Using again \cite[Theorem B]{PP25a} we have $h^1(X)=h^1(D)$, while the 
weak Lefschetz for intersection cohomology gives $Ih^1(X)=Ih^1(D)$. Additionally, in the following diagram,
\begin{displaymath}
\xymatrix{
{H^2(X,\Q)}\ar[d]_{}\ar[r]^-{}& {H^2(X,\O_X)}\ar[d]_{}\\
{H^2(D,\Q)}\ar[r]^-{}&{H^2(D,\O_D)}
}
\end{displaymath}
the left vertical map is an isomorphism, also by \cite[Theorem B]{PP25a}, while the right vertical map is injective by the assumption 
$H^2(X, L^{-1})=0$. This gives
$$
\ker(H^2(X,\Q)\to H^2(X,\O_X))=\ker(H^2(D,\Q)\to H^2(D,\O_D)),
$$
and therefore Theorem \ref{thm:Q-factoriality-defect} implies that
$\sigma(D)=\sigma(X)$.
\end{proof}

\subsection{Factoriality of hypersurfaces in fourfolds}\label{scn:fourfolds}
 The paper \cite{PRS14} gives a criterion for the factoriality of a hypersurface $X$ in a smooth complete intersection fourfold $Y$ in $\P^n$, when $X$ has only isolated ordinary multiple points. We show that Theorem \ref{thm:Q-factoriality-defect} leads to a quick proof of the generalization of this statement to arbitrary smooth projective fourfolds $Y$, by first extending the main technical statement of \emph{loc. cit.}, namely \cite[Theorem A]{PRS14},  to this setting.

\begin{thm}
\label{thm: Q-factoraility of ample divisor}
Let $X\subset Y$ be an irreducible normal ample divisor in a smooth projective fourfold $Y$. Let $\widetilde Y$ be the blow-up of $Y$ along finitely many points on $X$, 
and let  $\widetilde X\subset \widetilde Y$ be the strict transform of $X$. If $\widetilde X$ is a normal $\Q$-factorial ample divisor in $\widetilde Y$, then $X$ is $\Q$-factorial.
\end{thm}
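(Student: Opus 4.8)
The plan is to reduce $\Q$-factoriality of $X$ to the vanishing of its $\Q$-factoriality defect $\sigma(X)$ and to compare the two sides of the formula in Theorem~\ref{thm:Q-factoriality-defect} for $X$ and for $\widetilde X$. Write $\pi\colon\widetilde Y\to Y$ for the blow-up at the points $p_1,\dots,p_k\in X$, with exceptional divisors $E_1,\dots,E_k\cong\P^3$, and set $\rho:=\pi|_{\widetilde X}\colon\widetilde X\to X$; this is a projective birational morphism of normal projective threefolds, an isomorphism over $X\sm\{p_1,\dots,p_k\}$, whose fibre over $p_i$ is the hypersurface $G_i=\widetilde X\cap E_i\subset\P^3$ of degree $\mathrm{mult}_{p_i}X\ge 1$. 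Each $G_i$ is thus a nonempty divisor, so if $N$ denotes the total number of $\rho$-exceptional prime divisors then $N\ge k$. For both $X$ and $\widetilde X$, Theorem~\ref{thm:Q-factoriality-defect} gives $\sigma=a-b$ with $a(\,\cdot\,)=\dim_\Q(\mathit{IH}^2\cap\mathit{IH}^{1,1})$ and $b(\,\cdot\,)=\dim_\Q\ker\bigl(H^2(\,\cdot\,,\Q)\to H^2(\,\cdot\,,\O)\bigr)$, so it suffices to show $a(X)=b(X)$. To apply the formula I first check condition (i) for $X$: passing to a common resolution $\widehat X\to\widetilde X\xrightarrow{\rho}X$, Lemma~\ref{lem: H1 and H2 of normal variety} identifies $\mathit{IH}^1(X)\simeq H^1(\widehat X)\simeq\mathit{IH}^1(\widetilde X)$ and gives $h^5=Ih^1$ for both threefolds; since $\widetilde X$ is $\Q$-factorial it satisfies $h^1(\widetilde X)=h^5(\widetilde X)$, and the classical Lefschetz theorem (the complements $Y\sm X$, $\widetilde Y\sm\widetilde X$ being smooth affine) gives $h^1(X)=h^1(Y)$ and $h^1(\widetilde X)=h^1(\widetilde Y)$, while $h^1(\widetilde Y)=h^1(Y)$ as $\pi$ only blows up points; hence $h^1(X)=h^1(\widetilde X)$ and so $h^1(X)=h^5(X)$, making $\sigma(X)$ finite.

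Next I would compute the difference of the terms $b$. Since $Y\sm X$ is smooth affine of dimension $4$, the Lefschetz hyperplane theorem yields $H^2(Y,\Q)\xrightarrow{\sim}H^2(X,\Q)$, while Kodaira vanishing applied to $0\to\O_Y(-X)\to\O_Y\to\O_X\to0$ forces $H^2\bigl(\O_Y(-X)\bigr)=H^3\bigl(\O_Y(-X)\bigr)=0$ and hence $H^2(Y,\O_Y)\xrightarrow{\sim}H^2(X,\O_X)$. Therefore $b(X)$ equals the rank of the N\'eron--Severi group of $Y$. The identical computation on $\widetilde X\subset\widetilde Y$ shows that $b(\widetilde X)$ equals the rank of the N\'eron--Severi group of $\widetilde Y$, and since blowing up $k$ points of a smooth fourfold raises the Picard number by exactly $k$, I get $b(\widetilde X)=b(X)+k$.

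The crux is the comparison $a(\widetilde X)=a(X)+N$. Here I would apply Proposition~\ref{prop: kernel of H2 to IH2} to the two resolutions $\nu\colon\widehat X\to\widetilde X$ and $\mu=\rho\circ\nu\colon\widehat X\to X$: the kernels of $H^2(\widehat X)\to\mathit{IH}^2(\widetilde X)$ and of $H^2(\widehat X)\to\mathit{IH}^2(X)$ are freely spanned by the classes of the $\nu$-exceptional, respectively $\mu$-exceptional, prime divisors. The $\mu$-exceptional primes are exactly the $\nu$-exceptional ones together with the strict transforms of the $\rho$-exceptional primes, so comparing the two kernels (using the linear independence in Lemma~\ref{lem: linear independence of exceptional divisors}) shows that the pushforward $\mathit{IH}^2(\widetilde X)\to\mathit{IH}^2(X)$ is surjective with kernel of dimension $N$, spanned by the classes of the exceptional divisors of $\rho$. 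As these classes are of type $(1,1)$ and the pushforward is a morphism of pure weight-$2$ Hodge structures, passing to $(1,1)$-parts gives $a(\widetilde X)=a(X)+N$.

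Combining the three computations gives $0=\sigma(\widetilde X)=a(\widetilde X)-b(\widetilde X)=\bigl(a(X)+N\bigr)-\bigl(b(X)+k\bigr)=\sigma(X)+N-k$, so $\sigma(X)=k-N\le 0$; since $\sigma(X)\ge 0$ always, $\sigma(X)=0$ and $X$ is $\Q$-factorial (and incidentally $N=k$, i.e. each $G_i$ is irreducible). I expect the main obstacle to be precisely the intersection-cohomology comparison $a(\widetilde X)=a(X)+N$: one must make rigorous, through functoriality of proper pushforward along the common resolution $\widehat X$, that the classes $\rho$ creates in $\mathit{IH}^2$ are exactly the $N$ independent $(1,1)$-classes of its exceptional divisors and that they persist in the $(1,1)$-part --- this is where Proposition~\ref{prop: kernel of H2 to IH2} together with the bookkeeping of exceptional divisors does the real work.
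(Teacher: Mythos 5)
Your proof is correct and follows essentially the same route as the paper: both arguments reduce to showing $\sigma(X)\le 0$ via Theorem \ref{thm:Q-factoriality-defect}, transfer the term $\dim_\Q\ker(H^2\to H^2(\O))$ to the ambient fourfold by weak Lefschetz and Kodaira vanishing, and compare the $\mathit{IH}^2\cap\mathit{IH}^{1,1}$ terms by counting exceptional divisors via Proposition \ref{prop: kernel of H2 to IH2} and Lemma \ref{lem: linear independence of exceptional divisors}. The only (harmless) difference is bookkeeping: you compare $a(X)$ and $a(\widetilde X)$ directly through the common resolution and obtain the equality $a(\widetilde X)=a(X)+N$ with $N\ge k$, whereas the paper routes this comparison through $Y$ and $\widetilde Y$ and settles for the inequality $a(X)\le a(\widetilde X)-|I|$.
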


\begin{proof}
By the weak Lefschetz theorem, we have an isomorphism given by the restriction map
$$
H^2(Y,\Q)\isom H^2(X,\Q),
$$
and similarly
$$
H^2(Y,\O_Y)\isom H^2(X,\O_X),
$$
using the Kodaira vanishing theorem. Therefore, we obtain an equality
$$
\dim_\Q (\ker(H^2(Y,\Q)\to H^2(Y,\O_Y)))=\dim_\Q (\ker(H^2(X,\Q)\to H^2(X,\O_X))).
$$

Assuming that $\widetilde X$ is a normal $\Q$-factorial ample divisor in $\widetilde Y$, we first prove that $\sigma(X)$ is finite. Since the first cohomology is invariant under smooth blow-ups, we have
$$
H^1(X,\Q)\isom H^1(Y,\Q)\isom H^1(\widetilde Y,\Q) \isom H^1(\widetilde X,\Q),
$$
where the first and third isomorphisms are given by the weak Lefschetz theorem. Therefore, using Lemma \ref{lem: H1 and H2 of normal variety}, we have
$$
h^1(X)=h^1(\widetilde X)=Ih^1(\widetilde X)=Ih^1(X)=h^5(X).
$$
Hence, $\sigma(X)$ is finite by Theorem \ref{thm:Q-factoriality-defect}.

We next show that $\sigma(X)=0$. As above, we have an equality
$$
\dim_\Q (\ker(H^2(\widetilde Y,\Q)\to H^2(\widetilde Y,\O_{\widetilde Y})))=\dim_\Q (\ker(H^2(\widetilde X,\Q)\to H^2(\widetilde X,\O_{\widetilde X}))).
$$
Since both $\widetilde Y$ and $\widetilde X$ are $\Q$-factorial, they are equal to
$$
\dim_\Q(\mathit{IH}^2(\widetilde Y,\Q)\cap \mathit{IH}^{1,1}(\widetilde Y))=\dim_\Q(\mathit{IH}^2(\widetilde X,\Q)\cap \mathit{IH}^{1,1}(\widetilde X))
$$
by Theorem \ref{thm:Q-factoriality-defect}. Let $I$ be the 
index set for the exceptional divisors of $\widetilde Y\to Y$. Using Proposition \ref{prop: kernel of H2 to IH2},  it is easy to see  that 
$$
\dim_\Q(\mathit{IH}^2(Y,\Q)\cap \mathit{IH}^{1,1}(Y))=\dim_\Q(\mathit{IH}^2(\widetilde Y,\Q)\cap \mathit{IH}^{1,1}(\widetilde Y))-|I|
$$
and
$$
\dim_\Q(\mathit{IH}^2(X,\Q)\cap \mathit{IH}^{1,1}(X))\le\dim_\Q(\mathit{IH}^2(\widetilde X,\Q)\cap \mathit{IH}^{1,1}(\widetilde X))-|I|.
$$
The latter inequality follows from the fact that the morphism $\widetilde X\to X$ has at least $|I|$ exceptional divisors. By combining the above equalities and inequalities, we obtain
$$
\dim_\Q(\mathit{IH}^2(X,\Q)\cap \mathit{IH}^{1,1}(X))\le \dim_\Q (\ker(H^2(X,\Q)\to H^2(X,\O_X))),
$$
from which we conclude $\sigma(X)\le 0$ by Theorem \ref{thm:Q-factoriality-defect}. Hence, $X$ is $\Q$-factorial.
\end{proof}

The theorem is particularly useful when $X$ is a section of a power of a very ample line bundle with ordinary multiple point singularities; this leads to the promised extension of \cite[Theorem B]{PRS14} to arbitrary smooth fourfolds.

\begin{cor}
\label{cor: factoriality with ordinary multiple points}
Let $L$ be a very ample line bundle on a smooth projective fourfold $Y$, and $X$ is an irreducible normal divisor of $|dL|$. Let $S=\left\{p_1,...,p_k\right\}$ be the set of isolated ordinary multiple points of $X$ of multiplicities $m_1,...,m_k$. If $X\sm S$ is factorial and
$$
\sum_{i=1}^km_i<d,
$$
then X is factorial.
\end{cor}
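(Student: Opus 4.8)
The plan is to reduce to Theorem \ref{thm: Q-factoraility of ample divisor} by a single blow-up, and then to upgrade the resulting $\Q$-factoriality to genuine factoriality by a class-group computation. Write $S=\{p_1,\dots,p_k\}$, let $\pi\colon \widetilde Y\to Y$ be the blow-up of $Y$ at $S$ with exceptional divisors $E_i\isom \P^3$, and let $\widetilde X\subset \widetilde Y$ be the strict transform of $X$; note that $X\in|dL|$ is an irreducible normal ample divisor in the smooth fourfold $Y$, as required. Since $X$ has multiplicity $m_i$ at $p_i$, we have $\widetilde X\sim \pi^*(dL)-\sum_i m_i E_i$. Because each $p_i$ is an ordinary multiple point of multiplicity $m_i$, the projectivized tangent cone $T_i:=\widetilde X\cap E_i$ is a smooth surface of degree $m_i$ in $E_i\isom \P^3$, and $\widetilde X$ is smooth along each $E_i$; away from the $E_i$ the map $\pi$ restricts to an isomorphism $\widetilde X\sm \bigcup_i T_i\isom X\sm S$. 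Thus $\widetilde X$ is irreducible and normal, and since $X\sm S$ is factorial while $\widetilde X$ is smooth near each $T_i$, the variety $\widetilde X$ is factorial, and in particular $\Q$-factorial.

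Next I would verify that $\widetilde X$ is ample in $\widetilde Y$, which is exactly where the hypothesis $\sum_i m_i<d$ enters. Writing
$$
\widetilde X \sim \Big(d-\sum_i m_i\Big)\,\pi^*L+\sum_i m_i\,(\pi^*L-E_i),
$$
the coefficient $d-\sum_i m_i$ is a positive integer; since $L$ is very ample, each $\pi^*L-E_i$ is globally generated (its sections are those of $L$ vanishing at $p_i$, which separate points and tangents, so there is no base locus, and it restricts to $\O_{\P^3}(1)$ on $E_i$), and $\pi^*L$ is globally generated as well. Hence $\widetilde X$ is semiample. A direct intersection count then shows $\widetilde X\cdot C>0$ for every curve $C$: a curve inside some $E_j$ meets $\widetilde X$ in $m_j\,\O_{\P^3}(1)\cdot C>0$, while for a curve not contained in the exceptional locus one gets at least $(d-\sum_i m_i)\,L\cdot \pi_*C>0$. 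A semiample divisor positive on every curve is ample, so $\widetilde X$ is an ample divisor in the smooth fourfold $\widetilde Y$.

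With $\widetilde X$ normal, $\Q$-factorial and ample, Theorem \ref{thm: Q-factoraility of ample divisor} immediately yields that $X$ is $\Q$-factorial. To promote this to factoriality, I would compare divisor class groups along $\mu:=\pi|_{\widetilde X}\colon \widetilde X\to X$. As $\mu$ is an isomorphism over $X\sm S$ and $S$ has codimension $3$ in $X$, the standard exact sequence for the class group of the complement of the exceptional divisors gives
$$
\mathrm{Cl}(X)\isom \mathrm{Cl}(\widetilde X)\big/\big\langle [T_1],\dots,[T_k]\big\rangle=\Pic(\widetilde X)\big/\big\langle [T_1],\dots,[T_k]\big\rangle,
$$
the last equality because $\widetilde X$ is factorial. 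By the Grothendieck–Lefschetz theorem for the ample divisor $\widetilde X$ in the smooth projective fourfold $\widetilde Y$, restriction induces an isomorphism $\Pic(\widetilde Y)\isom \Pic(\widetilde X)$; combined with $\Pic(\widetilde Y)=\pi^*\Pic(Y)\oplus\bigoplus_i \Z E_i$ and $E_i|_{\widetilde X}=T_i$, this identifies $\mathrm{Cl}(X)$ with the image of the restriction map $\Pic(Y)\to \mathrm{Cl}(X)$. Since line bundles on $Y$ restrict to line bundles on $X$, this image lies in $\Pic(X)$, forcing $\mathrm{Cl}(X)=\Pic(X)$; that is, $X$ is factorial.

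The main obstacle is this final integral upgrade, rather than the $\Q$-factoriality, which is handed to us by Theorem \ref{thm: Q-factoraility of ample divisor}. Two points require care. First, the Grothendieck–Lefschetz isomorphism for $\Pic$ is cleanest when $\widetilde X$ is smooth (i.e.\ when $X$ has no singularities other than the $p_i$); in the general factorial case one must invoke the version of the statement valid under the depth and positivity hypotheses that $\widetilde X$ satisfies, or else argue directly that the torsion in $\mathrm{Cl}(X)/\Pic(X)$, which $\Q$-factoriality already shows to be finite, in fact vanishes. Second, this torsion vanishing is a local question at the $p_i$, reflecting the fact that the local class group of an ordinary multiple point is torsion-free, since it injects into $\Pic(T_i)$ for the smooth hypersurface $T_i\subset \P^3$, whose Picard group is torsion-free. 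Controlling these torsion phenomena uniformly across all the $p_i$ is the part of the argument that needs the most attention.
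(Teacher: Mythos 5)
Your proposal is correct, and the first half coincides with the paper's argument: blow up $Y$ along $S$, observe that $\widetilde X$ is normal and factorial (smooth near the exceptional locus by ordinarity, isomorphic to the factorial $X\sm S$ elsewhere), and check ampleness of $\widetilde X$ exactly via the decomposition $\widetilde X\sim(d-\sum_i m_i)\pi^*L+\sum_i m_i(\pi^*L-E_i)$ — this is the computation from \cite[Theorem 4.1]{PRS14} that the paper cites, and the hypothesis $\sum_i m_i<d$ enters only there. Where you diverge is the upgrade from $\Q$-factoriality to factoriality. The paper's route is purely local: by Robbiano's theorem the local class group of an isolated three-dimensional hypersurface singularity is torsion-free, so a Weil divisor with a Cartier multiple is already Cartier at each $p_i$ (and $X\sm S$ is factorial by hypothesis); your heuristic that this local group ``injects into $\Pic(T_i)$'' is not quite a proof of that torsion-freeness, so you would want to cite Robbiano (or SGA2) directly. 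Your global route via $\mathrm{Cl}(X)\isom\Pic(\widetilde X)/\langle T_1,\dots,T_k\rangle$ and the Grothendieck--Lefschetz isomorphism $\Pic(\widetilde Y)\isom\Pic(\widetilde X)$ is also valid: the concern you raise about singular $\widetilde X$ is harmless, since Grothendieck--Lefschetz for $\Pic$ holds for any effective ample (automatically lci) divisor of dimension $3$ in a smooth projective fourfold in characteristic zero, using Kodaira vanishing for $H^{\le 2}(\widetilde X,\O_{\widetilde X}(-n\widetilde X))$ together with ${\rm Lef}/{\rm Leff}$ from SGA2. It is worth noting what your route buys: since it identifies $\mathrm{Cl}(X)$ with the image of $\Pic(Y)$, which consists of Cartier classes, it proves factoriality outright and renders the appeal to Theorem \ref{thm: Q-factoraility of ample divisor} (and hence the Hodge theory) unnecessary for this particular corollary — the price being that it exploits the full factoriality of $\widetilde X$, whereas the paper's theorem is designed to work under the weaker hypothesis that $\widetilde X$ is only $\Q$-factorial.
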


Once we have Theorem \ref{thm: Q-factoraility of ample divisor}, the nice argument given in the proof of \cite[Theorem 4.1]{PRS14} works identically: the hypotheses ensure the ampleness of the proper transform 
$\widetilde X$ on the blow-up $\widetilde Y$ of $Y$ along $S$. (Note that it suffices to assume that $X\sm S$ is factorial, since the fact that $S$ consists of ordinary singularities implies that $\widetilde X$ is smooth 
in the neighborhood of $S$.)

\begin{rmk}
In particular, if $S$ is a set consisting of at most $\lfloor \frac{d-1}{2}\rfloor$ ordinary double points (i.e. nodes), then $X$ is factorial. For degree $d$ hypersurfaces in $\P^4$ with only nodes, Cheltsov \cite{Cheltsov10} proved that $X$ is factorial when there are at most $(d-1)^2-1$ nodes, and provided an example of a non-factorial variety with $(d-1)^2$ nodes. It would be interesting to understand sharp bounds on the number of singular points required to guarantee factoriality in general, in the context of Corollary \ref{cor: factoriality with ordinary multiple points}; cf. \cite[Conjecture D]{PRS14} when $Y = \P^4$.
\end{rmk}

\bibliographystyle{alpha}
\bibliography{refs}

\end{document}